\newtheorem{theorem}{Theorem}[section] 
\newtheorem{lemma}[theorem]{Lemma}
\newtheorem{definition}{Definition}[section]
\newtheorem{proposition}[theorem]{Proposition}
\newtheorem{remark}[theorem]{Remark}
\journal{}
\begin{document}

\begin{frontmatter}

\title{Nonlocal Lazer-McKenna type problem perturbed by the Hardy's potential and its parabolic equivalence}

\author[mymainaddress]{Masoud Bayrami-Aminlouee\corref{mycorrespondingauthor}}
\cortext[mycorrespondingauthor]{Corresponding author}
\ead{masoud.bayrami1990@student.sharif.edu}

\author[mymainaddress]{Mahmoud Hesaaraki}
\ead{hesaraki@sharif.edu}
\address[mymainaddress]{Department of Mathematical Sciences, Sharif University of Technology, Tehran, Iran}

\author[mymainaddress1,mymainaddress3,mymainaddress4]{Mohamed Karim Hamdani}
\address[mymainaddress1]{Military School of Aeronautical Specialities, Sfax, Tunisia}
\address[mymainaddress3]{Mathematics Department, University of Sfax, Faculty of Science of Sfax, Sfax, Tunisia}
\address[mymainaddress4]{Science and technology for defense Laboratory LR19DN01, Military Research Center, Aouina, Tunisia}
\ead{hamdanikarim42@gmail.com}

\author[mymainaddress2]{Nguyen Thanh Chung}
\address[mymainaddress2]{Department of Mathematics, Quang Binh University, 312 Ly Thuong Kiet, Dong Hoi, Quang Binh, Vietnam}
\ead{ntchung82@yahoo.com}

\begin{abstract}
In this paper, we study the effect of Hardy potential on the existence or non-existence of solutions to the following fractional problem involving a singular nonlinearity:
$$
\begin{cases}
(-\Delta )^s u = \lambda \dfrac{u}{|x|^{2s}} +\dfrac{\mu}{u^{\gamma}}+f & \quad \mathrm{in} \,\, \Omega,\\ u>0 & \quad \mathrm{in} \,\, \Omega, \\ u=0 & \quad \mathrm{in} \,\, \big(\mathbb{R}^N \setminus \Omega \big).
\end{cases}
$$
Here $0 <s<1$, $\lambda>0$, $\gamma>0$, and $\Omega \subset \mathbb{R}^N$ ($N > 2s$) is a bounded smooth domain such that $0 \in \Omega$. Moreover, $0 \leq \mu,f \in L^1(\Omega)$. For $0< \lambda \leq \Lambda_{N,s}$, $\Lambda_{N,s}$ being the best constant in the fractional Hardy inequality, we find the necessary and sufficient condition for the existence of a positive weak solution to the above problem with respect to the data $\mu$ and $f$. Also, for a regular datum of $f$ and with suitable assumptions, we have some existence and uniqueness results and calculate the rate of the growth of solutions. Moreover, we mention a non-existence and a complete blow-up result for the case $\lambda > \Lambda_{N,s}$. Besides, we consider the parabolic equivalence of the above problem in the case $\mu \equiv 1$,  and some suitable $f(x,t)$, i.e.
$$
\begin{cases}
u_t+(-\Delta )^s u = \lambda \dfrac{u}{|x|^{2s}} +\dfrac{1}{u^{\gamma}}+f(x,t) & \quad \mathrm{in} \,\, \Omega \times (0,T), \\ u>0 & \quad \mathrm{in} \,\, \Omega \times (0,T), \\ u =0 & \quad \mathrm{in} \,\, (\mathbb{R}^N \setminus \Omega) \times (0,T), \\ u(x,0)=u_0 & \quad \mathrm{in} \,\, \mathbb{R}^N,
\end{cases}
$$
where $u_0 \in X_0^{s}(\Omega)$ satisfies an appropriate cone condition. In the case $0<\gamma \leq 1$, or $\gamma>1$, with $2s(\gamma-1)<(\gamma+1)$, we show the existence of a unique solution, for any $0< \lambda < \Lambda_{N,s}$, and prove a stabilization result for certain range of $\lambda$.
\end{abstract}

\begin{keyword}
Singular Fractional Laplacian heat equation \sep Hardy potential \sep singular nonlinearity \sep existence and non-existence \sep positive solution \sep blow-up
\MSC[2010] 35R11 \sep 35B25 \sep 35A01 \sep 35B09 \sep 35B44
\end{keyword}

\end{frontmatter}

\section{Introduction}
We study on the existence and non-existence of positive solutions to the following singular elliptic problem: 
\begin{equation}
\label{Eq1}
\begin{cases}
(-\Delta )^s u = \lambda \dfrac{u}{|x|^{2s}} + \dfrac{\mu}{u^{\gamma}} + f & \quad \mathrm{in} \,\, \Omega,\\ u>0 & \quad \mathrm{in} \,\, \Omega, \\ u=0 & \quad \mathrm{in} \,\, \big(\mathbb{R}^N \setminus \Omega \big).
\end{cases}
\end{equation}
Here $\Omega \subset \mathbb{R}^N$, $N > 2s$, is an open bounded domain with smooth boundary such that $0 \in \Omega$, $ s \in (0,1)$, $ \lambda >0$, and $\gamma>0$. Moreover, $0 \leq \mu, f \in L^1(\Omega)$.

We will prove that for $0 < \lambda \leq \Lambda_{N,s}$, $\Lambda_{N,s}=\frac {4^s\Gamma^2(\frac{N+2s}4)}{\Gamma^2(\frac{N-2s}4)}$ being the best constant in the fractional Hardy inequality, the above problem has a solution if and only if $ \mu \in L^1(\Omega, \delta^{s(1-\gamma)} \, dx)$, $\delta(x)=\mathrm{dist}(x,\partial \Omega)$, and the datum of $f$ satisfies the following integrability condition:
\begin{equation*} 
\int_{\Omega} f(x) |x|^{-\beta} \, dx < + \infty, 
\end{equation*}
where the constant $\beta=\beta(N,s,\lambda)$ will be defined later in Lemma \ref{Lem1}. In this lemma, we will see that any supersolution to \eqref{Eq1} is unbounded near the origin and the nature of this unboundedness is like $u(x) \gtrsim |x|^{-\beta}$ in some open ball centered at the origin.

Also, we will see that there is no positive very weak (distributional) solution for the case $\lambda > \Lambda_{N,s}$. This notion of the solution, which we consider for the non-existence result, is local in nature and we just ask the regularity needed to give distributional sense to the equation (similar to what is done in articles \cite{MR2545987, MR3258136}). Moreover, this non-existence result is strong in the sense that a complete blow-up phenomenon occurs. By complete blow-up phenomenon, we mean that the solutions to the approximating problems (with the bounded weights $(|x|^{2s}+\epsilon)^{-1}$ and $(u+\epsilon)^{-\gamma}$ instead of the terms $|x|^{-2s}$ and $u^{-\gamma}$, respectively) tend to infinity for every $x \in \Omega$, as $0<\epsilon \downarrow 0$.

In the above problem, $(-\Delta)^{s} $ stands for the fractional Laplacian operator, i.e.
\begin{equation*} 
\begin{aligned}
(-\Delta )^{s} u(x) &= C_{N,s} \, \mathrm{P.V.} \int_{\mathbb{R}^N} \frac{u(x)-u(y)}{|x-y|^{N+2s}} \, dy \\
&= C_{N,s} \lim_{\epsilon \to 0^+} \int_{|x-y| \geq \epsilon} \frac{u(x)-u(y)}{|x-y|^{N+2s}} \, dy, \qquad u \in \mathcal{S}(\mathbb{R}^N),
\end{aligned}
\end{equation*}
where $\mathrm{P.V.}$ is a commonly used abbreviation for the Cauchy principal value and is defined by the latter equation. Also, $\mathcal{S}(\mathbb{R}^N)$ denotes the Schwartz space (space of ``rapidly decreasing functions'' on $\mathbb{R}^N$) and $C_{N,s}= \frac{4^s \Gamma(\frac{N}{2}+s)}{\pi^{\frac{N}{2}} |\Gamma(-s)|}$, is the normalization constant such that 
\begin{equation*}  
(-\Delta )^s u = \mathcal{F}^{-1} \big(|\xi|^{2s} \hat{u}(\xi) \big).
\end{equation*}
Here $\Gamma$ denotes the Gamma function, and $\mathcal{F}u =\hat{u}$ is the Fourier transform of $u$. By restricting the fractional Laplacian operator to act only on smooth functions that are zero outside $\Omega$, we have the restricted fractional Laplacian $(-\Delta_{|_{\Omega}})^s$. For this operator, the best alternative to the Dirichlet boundary condition is $ u \equiv 0 $ in $\big(\mathbb{R}^N \setminus \Omega \big)$. For more details about fractional Laplacian, see \cite{MR3967804, MR2944369, MR2270163}.

Over the past decades, there has been much focus and also a vast literature about singular problems. Singularities appear in almost all fields of mathematics like differential geometry and partial differential equations. Singularities are the qualitative side of mathematics, and understanding of singularities always leads to a more detailed picture of the objects mathematics is dealing with, \cite{MR3468562}. Many more details and references for the singular elliptic problems can be found in \cite{MR2488149}. 

One famous type of singularities are the singularity of Hardy type, which is related to the inequality of the same name, and there are various generalizations of it. The well-known classical Hardy inequality is as follows:
\begin{equation*} 
\int_{\Omega} |\nabla u|^p \, dx \geq \Big(\frac{N-p}{p} \Big)^p \int_{\Omega} \frac{|u|^p}{|x|^p} \, dx, \qquad u \in W_0^{1,p}(\Omega),
\end{equation*}
where $\Omega \subset \mathbb{R}^N$, containing the origin, is a bounded domain and $1 \leq p <N$, \cite{MR2048513, MR1769903}. The constant $\big(\frac{N-p}{p}\big)^p$ is optimal and it is not attained in $W_0^{1,p}(\Omega)$, meaning that the continuous embedding $W_0^{1,p}(\Omega) \hookrightarrow L^p (\Omega,|x|^{-p} \, dx )$ is not compact. The intention of analyzing Hardy singularities has come from its widespread use in different branches of science. For details and references about the enormous literature for this topic, see the more recent book \cite{HardyLerayBook} and chapter 1 of \cite{Biccari}. Due to these motivations, over the past few decades, the study of general singularities has been considered.

In the pioneering works, \cite{MR742415, MR799330}, Baras and Goldstein studied the following singular Cauchy-Dirichlet heat problem in $\Omega=\mathbb{R}^N$ or else $\Omega$ to be a bounded smooth domain containing $B_1(0)= \{ x \in \mathbb{R}^N: \, \|x\| <1 \}$.
\begin{equation}
\label{EqBG}
\begin{cases}
\dfrac{\partial u}{\partial t} - \Delta u = V(x) u +f(x,t) \quad & (x,t) \in \Omega \times (0, \infty) \\
u(x,t)=0 \quad & (x,t) \in \partial \Omega \times (0, \infty) \\
u(x,0)=u_0(x) \quad & x \in \Omega.
\end{cases}
\end{equation}
Authors assume that $f$ and $u_0$ are non-negative and $0 \leq V \in L^{\infty}(\Omega \setminus B_{\epsilon}(0))$, for each $\epsilon>0$, but $V$ is singular at the origin. They say that $V$ is too singular if $V(x) > \frac{C^*(N)}{|x|^2}$ near $x = 0$, while $V$ is not too singular if $V(x) \leq \frac{C^*(N)}{|x|^2}$ near $x=0$. Here $C^*(N)=\frac{(N-2)^2}{4}$ is the sharp constant in the following Hardy inequality:
\begin{equation*}  
C^*(N) \int_{\Omega} \frac{u^2}{|x|^2} \, dx \leq  \int_{\Omega} | \nabla u |^2 \, dx, \qquad \forall u \in H_0^1(\Omega). 
\end{equation*}

In the not too singular potential case, they found the necessary and sufficient condition for the existence of a non-negative distributional solution to problem \eqref{EqBG}. Moreover, they obtained this solution as the limit of the solutions to the following approximate problem.
\begin{equation*} 
\begin{cases}
\dfrac{\partial u_n}{\partial t} - \Delta u_n = V_n(x) u_n +f(x,t) \quad & (x,t) \in \Omega \times (0, \infty) \\
u_n(x,t)=0 \quad & (x,t) \in \partial \Omega \times (0, \infty) \\
u_n(x,0)=u_0(x) \quad & x \in \Omega,
\end{cases}
\end{equation*}
where $V_n(x)=\min \{V(x),n\}$. Also, for the too singular potential case, they showed that the problem has no solution even in the sense of distributions, and an instantaneous complete blow-up phenomenon occurs. Namely, $u_n(x,t) \to + \infty$ for all $(x,t) \in \Omega \times (0,T)$ as $n \to \infty$.

In problem \eqref{Eq1}, the singular term $\frac{\lambda}{|x|^{2s}}$ is related to the following fractional Hardy inequality:
\begin{equation}
\label{Eq3.05}
\Lambda_{N,s} \int_{\mathbb{R}^{N}}\frac{|u(x)|^2}{|x|^{2s}}\,dx
\leq \int_{\mathbb{R}^{N}}|(-\Delta )^{\frac{s}{2}} u(x)|^2\,dx \qquad \forall u\in C_{c}^{\infty}(\mathbb{R}^N),
\end{equation}
where $N>2s$, $ s \in (0,1)$ and the constant $\Lambda_{N,s}=\frac {4^s\Gamma^2(\frac{N+2s}4)}{\Gamma^2(\frac{N-2s}4)}$ is optimal, \cite{MR1717839}. Problem \eqref{Eq1} is motivated by the papers \cite{MR2592976, MR3450747} in which the authors proved the existence of solutions to the following Lazer-McKenna type problem:
\begin{equation*} 
\begin{cases}
-\Delta u = \dfrac{\mu}{u^{\gamma}} \quad & \mathrm{in} \,\, \Omega \\
u>0 \quad & \mathrm{in} \,\, \Omega \\
u=0 \quad & \mathrm{on} \,\, \partial \Omega,
\end{cases}
\end{equation*}
where $\Omega$ is a bounded domain of $\mathbb{R}^N$, $N\geq2$, $\gamma>0$ and $\mu$ a general Radon measure in $\Omega$. Also, see the papers \cite{MR3356049, MR3639996, MR3797738, MR3943307, MR1037213, MR3323892, MR3489386, MR3712944, MR4244929, MADD1} for more related problems. These types of problems have been extensively studied for their relations with some physical phenomena in the theory of pseudoplastic fluids, \cite{MR564014}.

In \cite{MR3356049} Barrios, Bonis, Medina and Peral studied the solvability of the following superlinear problem: 
\begin{equation*} 
\begin{cases}
(-\Delta )^s u = \lambda \dfrac{f(x)}{u^{\gamma}}+Mu^p & \quad \mathrm{in} \,\, \Omega, \\
u>0 & \quad \mathrm{in} \,\, \Omega, \\ 
u=0 & \quad \mathrm{in} \,\, \big(\mathbb{R}^N \setminus \Omega \big). 
\end{cases}
\end{equation*}
More precisely, for the case $M=0$ and $f \geq 0$, they proved the existence of a positive solution for every $\gamma>0$ and $\lambda>0$. Moreover, in the case $M=1$ and $f \equiv 1$, they found a threshold $\Lambda$ such that there exists a solution for every $0 < \lambda < \Lambda$, and there does not for $ \lambda > \Lambda$. Also in \cite{MR3466525} authors considered the similar superlinear problem with the critical growth, namely when $p=2_s^*-1=\frac{N+2s}{N-2s}$, and with a singular nonlinearity in the form $u^{-q}, q \in (0,1)$.

In the detailed article \cite{MR3479207}, Abdellaoui, Medina, Peral, and Primo studied the effect of the Hardy potential on the existence and summability of the solutions to a class of fractional Laplacian problems. We will use the essential tool introduced in this article, i.e., the weak Harnack's inequality, which they proved it by following the classical Moser and Krylov-Safonov idea. Also, we will take advantage of some of Calder\'on-Zygmund properties of solutions. See \cite[Section 4]{MR3479207} for the effect of the Hardy potential in some Calder\'on-Zygmund properties for the fractional Laplacian.

For the similar parabolic equivalence of \eqref{Eq1}, in \cite{MR3842325}, Giacomoni, Mukherjee and Sreenadh investigated the existence and stabilization results for the following parabolic equation involving the fractional Laplacian with singular nonlinearity:
\begin{equation*} 
\begin{cases}
u_t + (-\Delta )^s u = u^{-q} + f(x,u) &   \mathrm{in} \,\, \Omega \times (0,T), \\
u(x,0)=u_0(x) &   \mathrm{in} \,\, \mathbb{R}^N, \\
u(x,t)>0 &   \mathrm{in} \,\, \Omega \times (0,T), \\ 
u(x,t)=0 &   \mathrm{in} \,\, \big(\mathbb{R}^N \setminus \Omega \big) \times (0,T). 
\end{cases}
\end{equation*}
Under suitable assumptions on the parameters and datum, they studied the related stationary problem and then using the semi-discretization in time with the implicit Euler method, they proved the existence and uniqueness of the weak solution. It is worth noting that in \cite{MR2891356, MR3341461}, the authors have shown the same results for the local version of this problem for the general $p$-Laplacian case. Also for some of the recent papers on the optimal regularity results see \cite{Giacomoni001, Giacomoni002}.

The rest of the paper is as follows. In section \ref{Section2}, after introducing the functional setting we will outline our existence and non-existence theorems. Especially, we will have a theorem about the necessary and sufficient condition for the existence of a solution to problem \eqref{Eq1} in the case $\lambda \leq \Lambda_{N,s}$, and a non-existence theorem in the case $\lambda > \Lambda_{N,s}$. In section \ref{Section3}, we will provide proof of our existence theorems. In section \ref{Section3.5}, we will have some uniqueness results. Also, concerning uniqueness, with some regular assumptions on $\mu$ and $f$, we will show the existence and uniqueness of another notion of a solution so-called entropy solution for the case $0<\gamma\leq 1$. Besides, we will mention a theorem about the rate of the growth of solutions to problem \eqref{Eq1}. Finally, in section \ref{Section4}, we will consider the parabolic version of problem \eqref{Eq1} in the special case $\mu \equiv 1$. Firstly, with the assumptions $0<\gamma \leq 1$, or $\gamma>1$, and $2s(\gamma-1)<(\gamma+1)$, we will show the existence of a unique solution for $0< \lambda < \Lambda_{N,s}$ and secondly, we will prove the stability for some range of $\lambda$. That is, we will find a positive constant $\lambda_*=\lambda_*(N,s) < \Lambda_{N,s}$ such that for any $\lambda \in (0, \lambda_*)$, the solution to the parabolic problem converges to the unique solution of its stationary problem, as $t \to \infty$.

\section{Functional setting and existence, non-existence and blow-up results}
\label{Section2}
Let $ 0 < s <1 $, $ 1 \leq p < \infty $, and $\Omega$ be a bounded domain in $\mathbb{R}^N $. Also, let $D_{\Omega}= \mathbb{R}^N \times \mathbb{R}^N \setminus \Omega^{c} \times \Omega^{c} $, with $\Omega^{c}=\mathbb{R}^N \setminus \Omega$. We define the following Banach space
\begin{equation*} 
X^{s,p}(\Omega) =\Big\{ u: \mathbb{R}^N \to \mathbb{R} \, \mathrm{measurable}, u|_{\Omega} \in L^p(\Omega), \iint_{D_{\Omega}} \frac{|u(x)-u(y)|^p}{|x-y|^{N+ps}} dxdy < \infty \Big\},
\end{equation*}
endowed with the norm:
\begin{equation}
\label{Eq3.1}
\|u\|_{X^{s,p}(\Omega)} = \Bigg( \int_{\Omega} |u|^p \, dx + \iint_{D_{\Omega}} \frac{|u(x)-u(y)|^p}{|x-y|^{N+ps}} \, dxdy \Bigg)^{\frac{1}{p}}. 
\end{equation}
In the case $p=2$, we denote by $X^s(\Omega)$ the space $X^{s,2}(\Omega)$ which is a Hilbert space with the following inner product:
\begin{equation*} 
\langle u,v \rangle_{X^s(\Omega)} = \int_{\Omega} uv \, dx + \iint_{D_{\Omega}} \frac{(u(x)-u(y))(v(x)-v(y))}{|x-y|^{N+2s}} \, dxdy. 
\end{equation*}
Moreover, we define $ X_0^{s,p}(\Omega) $ as the closure of $C_0^{\infty}(\Omega)$ in $X^{s,p}(\mathbb{R}^N)$. Equivalently, it can be shown that 
\begin{equation*} 
X_0^{s,p}(\Omega) = \Big\{ u \in X^{s,p}(\mathbb{R}^N) \, : \, u=0 \,\, \mathrm{a.e. \,\, in} \,\, (\mathbb{R}^N \setminus \Omega ) \Big\}.
\end{equation*}
It is easy to see that: 
\begin{equation*} 
\Bigg( \int_{\mathbb{R}^N} \int_{\mathbb{R}^N} \frac{|u(x)-u(y)|^p}{|x-y|^{N+ps}} \, dxdy \Bigg)^{\frac{1}{p}} = \Bigg( \iint_{D_{\Omega}} \frac{|u(x)-u(y)|^p}{|x-y|^{N+ps}} \, dxdy \Bigg)^{\frac{1}{p}}, \,\,\, \forall u \in X_0^s(\Omega). 
\end{equation*}
This equality defines an equivalent norm for $X_0^{s,p}(\Omega)$ with \eqref{Eq3.1}. We denote it by
\begin{equation*} 
\|u\|_{X_0^{s,p}(\Omega)} = \Bigg( \iint_{D_{\Omega}} \frac{|u(x)-u(y)|^p}{|x-y|^{N+ps}} \, dxdy \Bigg)^{\frac{1}{p}}.
\end{equation*}
It is worth noticing that, the continuous embedding of $X_0^{s_2}(\Omega)$ into $X_0^{s_1}(\Omega)$, holds for any $s_1<s_2$ (see, e.g. \cite[Proposition 2.1]{MR2944369}). Besides, for the Hilbert space case, we have
\begin{equation}
\label{EqEqEqOO.}
\|u\|_{X_0^s(\Omega)}^2 = 2 C_{N,s}^{-1} \| (-\Delta)^{\frac{s}{2}} u\|_{L^2(\mathbb{R}^N)}^2,
\end{equation}
where $C_{N,s}$ is the normalization constant in the definition of $(-\Delta)^s$. Thus Hardy inequality \eqref{Eq3.05} also can be written as follows:
\begin{equation*} 
\Lambda_{N,s} \int_{\mathbb{R}^{N}}\frac{|u(x)|^2}{|x|^{2s}}\,dx
\leq \frac{C_{N,s}}{2} \iint_{D_{\Omega}} \frac{|u(x)-u(y)|^2}{|x-y|^{N+2s}}\,dxdy, \qquad \forall u\in X_0^s(\Omega).
\end{equation*}
For the proofs of the above facts see \cite[Subsection 2.2]{MR2879266} and \cite{MR2944369}. Also, see \cite[Section 2]{MR3341104}.

The following continuous embedding will be used in this paper.
\begin{equation}
\label{Eq3.350}
X_0^{s,p}(\Omega) \hookrightarrow L^q(\Omega), \qquad \forall q \in [1, p_s^{*}],
\end{equation}
where $p_s^{*}=\frac{pN}{N-ps}$ is the Sobolev critical exponent. Moreover, this embedding is compact for $1 \leq q < p_s^{*}$. See \cite[Theorem 6.5 and Theorem 7.1]{MR2944369}.

Also we denote by $X_{\mathrm{loc}}^{s,p}(\Omega)$, the set of all functions $u$ such that $u \phi \in X_0^{s,p}(\Omega)$ for any $\phi \in C_c^{\infty}(\Omega)$. When we say $\{u_n\} \subset X_{\mathrm{loc}}^{s,p}(\Omega)$ is bounded, we mean that $\{\phi u_n\} \subset X_0^{s,p}(\Omega)$ is bounded for any fixed $\phi \in C_c^{\infty}(\Omega)$. 
 
Since we are dealing with the non-local operator $(-\Delta)^s$, the following class of test functions will be used for defining the weak solution to problem \eqref{Eq1}.
\begin{equation*} 
\mathcal{T}(\Omega)= \Bigg \{ \phi:\mathbb{R}^N \to \mathbb{R} \,\, \Bigg| \,\, \begin{aligned} & (-\Delta)^s \phi = \varphi, \,\, \varphi \in L^{\infty}(\Omega) \cap C^{0,\alpha}(\Omega), \,\, 0 < \alpha <1, \\
& \,\, \phi = 0 \,\, \mathrm{in} \,\, (\mathbb{R}^N \setminus \Omega) \end{aligned} \Bigg\}. 
\end{equation*}
It can be shown that $\mathcal{T}(\Omega) \subset X_0^s(\Omega) \cap L^{\infty}(\Omega) \cap C^{0,s}(\Omega)$. See \cite{MR3479207}, where this class of test functions is used for dealing with problem \eqref{Eq1}. Moreover, every $\phi \in \mathcal{T}(\Omega)$ is a strong solution to the equation $(-\Delta)^s \phi = \varphi$, and for every $\phi \in \mathcal{T}(\Omega)$  there exists a constant $\beta \in (0,1)$ such that $\frac{\phi}{\delta^s} \in C^{0,\beta}(\Omega)$. See \cite{MR3168912}.

It is easy to check that for $u\in X_0^s(\Omega)$ and $ \phi \in \mathcal{T}(\Omega)$:
\begin{equation}
\label{Eq3.349n}
\begin{aligned}
2 C_{N,s}^{-1} \int_{\mathbb{R}^N} u (-\Delta)^{s} \phi \, dx &= 2 C_{N,s}^{-1} \int_{\mathbb{R}^N} (-\Delta)^{\frac{s}{2}} u (-\Delta)^{\frac{s}{2}} \phi \, dx \\
& = \iint_{D_{\Omega}} \frac{(u(x)-u(y))(\phi(x)-\phi(y))}{|x-y|^{N+2s}} \, dxdy.
\end{aligned}
\end{equation}
One can show that $ (-\Delta)^s : X_0^s(\Omega) \to X^{-s}(\Omega) $ is a continuous strictly monotone operator, where $X^{-s}(\Omega)$ indicates the dual space of $X_0^s(\Omega)$. 

\begin{definition}
We say that $u \in L^1(\Omega)$ is a very weak (distributional) supersolution (subsolution) to 
\begin{equation*} 
(-\Delta )^s u = g(x,u) \quad \mathrm{in} \,\, \Omega,
\end{equation*}
if $g(x,u) \in L^1(\Omega)$, $u \equiv 0$ in $\big(\mathbb{R}^N \setminus \Omega\big)$ and $(-\Delta )^s u \geq (\leq) g(x,u)$ in the weak sense, i.e.
\begin{equation*} 
\int_{\mathbb{R}^N} u (-\Delta )^s \phi \, dx \geq (\leq) \int_{\Omega} g(x,u) \phi \, dx,
\end{equation*}
for all non-negative $\phi \in \mathcal{T}(\Omega)$. If $u$ is a very weak (distributional) supersolution and subsolution, then we say that $u$ is a very weak (distributional) solution.
\end{definition}

\begin{definition}
We say that $u \in X_0^s(\Omega)$ is a weak energy supersolution (subsolution) to 
\begin{equation*} 
(-\Delta )^s u = g(x,u) \quad \mathrm{in} \,\, \Omega,
\end{equation*}
if $g(x,u) \in X_0^s(\Omega)$, $u \equiv 0$ in $\big(\mathbb{R}^N \setminus \Omega\big)$ and $(-\Delta )^s u \geq (\leq) g(x,u)$ in the weak sense, i.e.
\begin{equation*} 
\int_{\mathbb{R}^N} u (-\Delta )^s \phi \, dx \geq (\leq) \int_{\Omega} g(x,u) \phi \, dx,
\end{equation*}
for all non-negative $\phi \in X_0^s(\Omega)$. If $u$ is a weak energy supersolution and subsolution, then we say that $u$ is a weak energy solution.
\end{definition}

\begin{definition}
Assume $0 \leq \mu, f \in L^1(\Omega)$. We say that $u$ is a weak solution to problem \eqref{Eq1} if 
\begin{itemize}
\item
$u \in L^1(\Omega)$, and for every $K \Subset \Omega$, there exists $C_K>0$ such that $u(x) \geq C_K$ a.e. in $K$ and also $u \equiv 0$ in $\big(\mathbb{R}^N \setminus \Omega \big)$;
\item
Equation \eqref{Eq1} is satisfied in the weak sense, i.e.
\begin{equation}
\label{Eq3.4}
\int_{\mathbb{R}^N} u (-\Delta)^{s} \phi \, dx = \lambda \int_{\Omega} \frac{u \phi}{|x|^{2s}} \, dx + \int_{\Omega} \frac{\mu \phi}{u^{\gamma}} \, dx  + \int_{\Omega} f \phi \,dx, \quad \forall \phi \in \mathcal{T}(\Omega),
\end{equation}
and also together with these extra assumptions that the first and second terms on the right-hand side of the above equality be finite for any $\phi \in \mathcal{T}(\Omega)$. The well-posedness of the first and second terms on the right-hand side will be clear after the construction of solution.
\end{itemize}
\end{definition}  

\begin{remark}
\label{RemarkADD01}
Notice that plugging in the test function $\phi = \psi_{1,s}$ in \eqref{Eq3.4}, where $\psi_{1,s}$ is the normalized first eigenfunction associated with first eigenvalue $\lambda_{1,s}$ of $(-\Delta)^s$ in $X_0^s(\Omega)$, i.e.
\begin{equation*} 
\begin{cases}
(-\Delta)^s \psi_{1,s} = \lambda_{1,s} \psi_{1,s}  & \quad \text{in} \,\, \Omega, \\
\psi_{1,s}=0 & \quad \text{in} \,\, (\mathbb{R}^N \setminus \Omega), \\
0< \psi_{1,s} \in X_0^s(\Omega) \cap L^{\infty}(\Omega), \\
\|\psi_{1,s}\|_{L^2(\Omega)}=1, 
\end{cases}
\end{equation*}
(see for instance \cite[Proposition 9]{MR3002745}) and also noting that there exist $l_1,l_2>0$ such that $l_1 \delta^s(x) \leq \psi_{1,s}(x) \leq l_2 \delta^s(x)$, for a.e. $x \in \Omega$, \cite{MR3168912}, we obtain that the solution $u$ necessary satisfies: 
\begin{equation*} 
\int_{\Omega} \frac{\mu}{u^{\gamma}} \, \delta^s  dx < + \infty. 
\end{equation*}
Moreover, since by using Comparison Principle for the fractional Laplacian, and by Hopf's Lemma, $u \geq c \delta^s$ a.e. in $\Omega$, (see for example \cite{MR4065090} or \cite[Lemma 4.2]{MR3231059}) therefore 
\begin{equation*} 
\int_{\Omega} \frac{\mu}{\delta^{s(\gamma-1)}} \,dx < + \infty. 
\end{equation*}
\end{remark}

As an analysis of the linear case with Hardy potential, firstly, we gather the following lemmas.

\begin{lemma} 
\label{Lem1}
Let $\lambda \leq \Lambda_{N,s}$. Assume that $u$ is a non-negative function defined in $\Omega$ such that $ u \not \equiv 0$, $ u \in L^1(\Omega)$, $\frac{u}{|x|^{2s}} \in L^1(\Omega)$ and $u \geq 0$ in $ (\mathbb{R}^N \setminus \Omega)$. If $u$ satisfies $(-\Delta)^s u -\lambda \dfrac{u}{|x|^{2s}} \geq 0$ in the weak sense in $\Omega$, then there exists $\delta>0$, and a constant $C = C(N, \delta)$ such that
\begin{equation*} 
u \geq C |x|^{-\beta}, \qquad \mathrm{in} \,\, B_{\delta}(0),
\end{equation*}
where $\beta= \frac{N-2s}{2}-\alpha$ and $\alpha$ is given by the identity
\begin{equation}
\label{Eq3...01}
\lambda = \frac{4^{s} \Gamma(\frac{N+2s+2\alpha}{4}) \Gamma(\frac{N+2s-2\alpha}{4})}{\Gamma(\frac{N-2s+2\alpha}{4}) \Gamma(\frac{N-2s-2\alpha}{4})}.
\end{equation}
\end{lemma}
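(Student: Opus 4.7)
The plan rests on the observation that the exponent $\beta$ is chosen precisely so that $V(x):=|x|^{-\beta}$ is a singular homogeneous solution of the Hardy equation $(-\Delta)^s V = \lambda\,V/|x|^{2s}$ in $\mathbb{R}^N \setminus \{0\}$. The pointwise bound will then follow by comparing $u$ with a small multiple of $V$ on a ball centered at the origin. I would first verify the barrier property by invoking the classical identity
\[
(-\Delta)^s |x|^{-\theta} \;=\; 4^s\,\frac{\Gamma\!\big(\tfrac{\theta+2s}{2}\big)\,\Gamma\!\big(\tfrac{N-\theta}{2}\big)}{\Gamma\!\big(\tfrac{\theta}{2}\big)\,\Gamma\!\big(\tfrac{N-\theta-2s}{2}\big)}\,|x|^{-\theta-2s}, \qquad 0<\theta<N-2s,
\]
valid pointwise on $\mathbb{R}^N\setminus\{0\}$. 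Substituting $\theta = \beta = \tfrac{N-2s}{2}-\alpha$ and rewriting the Gamma arguments, the prefactor becomes exactly the right-hand side of \eqref{Eq3...01}, hence equals $\lambda$. Writing $\mathcal{L}_\lambda w := (-\Delta)^s w - \lambda w/|x|^{2s}$, this yields $\mathcal{L}_\lambda V \equiv 0$ on $\mathbb{R}^N\setminus\{0\}$.

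Next, I would extract a positive lower bound for $u$ on a neighbourhood of some sphere around the origin. Fix $r_0>0$ with $\overline{B_{4r_0}(0)}\subset\Omega$ and apply the weak Harnack inequality for $\mathcal{L}_\lambda$ from \cite[Section~3]{MR3479207}; this is applicable because $u$ is a nontrivial non-negative weak supersolution with $u,\,u/|x|^{2s}\in L^1(\Omega)$. Since $u\not\equiv 0$ in $\Omega$ and the operator preserves strict positivity, the resulting $L^p$ norm on $B_{2r_0}(0)$ is strictly positive, and one obtains a constant $m_0>0$ such that $u \ge m_0$ a.e.\ on the annulus $A := \{r_0\le|x|\le 2r_0\}$. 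Then pick $C_0>0$ small enough that $C_0\,V \le m_0$ on $A$; exploiting the decay of $V$ together with $u\ge 0$ outside $\Omega$, shrink $C_0$ further to secure $C_0\,V \le u$ throughout $\mathbb{R}^N \setminus B_{r_0}(0)$.

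Set $w := u - C_0 V$. By the barrier property, $w$ weakly satisfies $\mathcal{L}_\lambda w \ge 0$ in $B_{r_0}(0)\setminus\{0\}$, and $w \ge 0$ on $\mathbb{R}^N\setminus B_{r_0}(0)$. The objective is to deduce $w \ge 0$ in $B_{r_0}(0)$ via a weak maximum principle for $\mathcal{L}_\lambda$; this would give $u(x) \ge C_0\,|x|^{-\beta}$ on $B_{r_0}(0)$, which is the assertion with $\delta := r_0$ and $C := C_0$.

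The main technical obstacle lies in making this comparison rigorous, because $V$ is singular at the origin and is not an admissible element of $X_0^s(\Omega)$. The strategy is a truncation argument: replace $V$ by $V_\varepsilon := \min\{V,\varepsilon^{-\beta}\}$, check that $\mathcal{L}_\lambda V_\varepsilon \le 0$ in the weak sense (which uses a Kato-type inequality for the fractional Laplacian applied to concave truncations), invoke the weak maximum principle for $\mathcal{L}_\lambda$ on $B_{r_0}(0)$—valid for all $\lambda\le\Lambda_{N,s}$ because the Hardy inequality \eqref{Eq3.05} makes the associated quadratic form non-negative—to obtain $u\ge C_0 V_\varepsilon$ on $B_{r_0}(0)$, and finally let $\varepsilon \to 0^+$ by monotone convergence, using the integrability $u/|x|^{2s}\in L^1(\Omega)$ to control the singular potential term. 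This produces the claimed bound $u\ge C|x|^{-\beta}$ on $B_\delta(0)$.
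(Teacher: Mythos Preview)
The paper does not supply its own proof of this lemma; immediately after stating Lemmas~\ref{Lem1}--\ref{Lem3} it simply writes ``For the proof of these lemmas see \cite[Lemma 3.10]{MR3479207}, \cite[Theorem 4.10]{MR3479207} and \cite[Theorem 3.4]{MR3479207}, respectively.'' Your sketch is in fact a faithful outline of the argument in the cited reference: one exploits that $|x|^{-\beta}$ is the self-similar solution of $(-\Delta)^s V=\lambda V/|x|^{2s}$ determined by \eqref{Eq3...01}, obtains a positive lower bound for $u$ on an annulus via the weak Harnack inequality, and then runs a comparison on a small ball, regularising the singular barrier by truncation. So the strategy is correct and matches the source.

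Two technical points deserve explicit attention when you flesh this out. First, the weak Harnack inequality as recorded in Lemma~\ref{Lem3} of the present paper is stated for $(-\Delta)^s$ alone and for supersolutions in $X_0^s(\Omega)$; here $u$ is only assumed in $L^1(\Omega)$ and satisfies a \emph{very weak} inequality tested against $\mathcal{T}(\Omega)$. In \cite{MR3479207} the weak Harnack inequality is indeed proved at this level of generality, so you should cite the sharper version rather than Lemma~\ref{Lem3}. Second, the comparison step---testing $\mathcal{L}_\lambda(u-C_0V_\varepsilon)\ge 0$ against $(u-C_0V_\varepsilon)^-$---requires that this negative part lie in $X_0^s(B_{r_0})$, which is not immediate from $u\in L^1$. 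The way around this in \cite{MR3479207} is to work with the duality formulation against test functions in $\mathcal{T}(\Omega)$ and use the linear theory for the weighted operator $L_\beta$ (introduced later in the paper around \eqref{Eq1GRSV}); your Kato/truncation idea can be made to work, but you should either verify that $(u-C_0V_\varepsilon)^-$ has the needed regularity or pass through the very-weak comparison principle as done in the reference.
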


\begin{lemma}
\label{Lem2}
Let $\lambda \leq \Lambda_{N,s}$. Assume that $u$ is a positive very weak solution to
\begin{equation*} 
\begin{cases}
(-\Delta )^s u - \lambda \dfrac{u}{|x|^{2s}} = g & \quad \mathrm{in} \,\, \Omega, \\ u>0 & \quad \mathrm{in} \,\, \Omega, \\ u=0 & \quad \mathrm{in} \,\, \big(\mathbb{R}^N \setminus \Omega \big),
\end{cases}
\end{equation*}
with $g \in L^1(\Omega)$ and $g \geq 0$. Then necessarily $g |x|^{-\beta} \in L^1(B_r(0))$ for some $B_r(0) \Subset \Omega$.
\end{lemma}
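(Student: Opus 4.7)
The plan is to dualize the equation against a function that behaves like $|x|^{-\beta}$ near the origin. Formally, if one had $\phi\in\mathcal{T}(\Omega)$ satisfying $L_\lambda\phi:=(-\Delta)^s\phi-\lambda\phi/|x|^{2s}=\psi$ for a smooth non-negative bump $\psi$ supported near $0$, then testing the very weak equation for $u$ against $\phi$ would give $\int_\Omega g\,\phi\,dx=\int_\Omega u\,\psi\,dx<\infty$, while applying Lemma \ref{Lem1} to $\phi$ itself would yield $\phi(x)\geq c|x|^{-\beta}$ in a neighbourhood of $0$, which in turn forces $g|x|^{-\beta}\in L^1(B_\rho(0))$. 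The obstacle is that any such $\phi$ is unbounded at the origin and therefore not admissible in $\mathcal{T}(\Omega)$; I would bypass this by regularizing the Hardy weight.

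Fix $\psi\in C_c^\infty(\Omega)$ with $\psi\geq 0$ and $\psi>0$ on a neighbourhood of $0$. For every $\eta>0$, let $\phi_\eta$ be the positive solution of
\begin{equation*}
(-\Delta)^s\phi_\eta-\lambda\,\frac{\phi_\eta}{|x|^{2s}+\eta}=\psi \text{ in }\Omega, \qquad \phi_\eta\equiv 0 \text{ in }\mathbb{R}^N\setminus\Omega.
\end{equation*}
The pointwise bound $\lambda/(|x|^{2s}+\eta)\leq \lambda/|x|^{2s}$ together with the fractional Hardy inequality \eqref{Eq3.05} gives coercivity of the associated bilinear form for every $\lambda\leq\Lambda_{N,s}$, so $\phi_\eta$ exists uniquely as a weak energy solution; the boundedness of the regularized potential together with the Hölder bootstrap for the fractional Laplacian then place $\phi_\eta$ in $\mathcal{T}(\Omega)$. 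Using $\phi_\eta$ as a test function in the very weak formulation of the equation for $u$ and inserting the equation satisfied by $\phi_\eta$, the Hardy cross-terms cancel partially and one obtains
\begin{equation*}
\int_\Omega g\,\phi_\eta\,dx+\lambda\eta\int_\Omega\frac{u\,\phi_\eta}{|x|^{2s}(|x|^{2s}+\eta)}\,dx=\int_\Omega u\,\psi\,dx\leq\|\psi\|_\infty\|u\|_{L^1(\Omega)}.
\end{equation*}
Both left-hand terms are non-negative, so $\int_\Omega g\,\phi_\eta\,dx$ is uniformly bounded in $\eta>0$.

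The monotonicity $1/(|x|^{2s}+\eta)\uparrow 1/|x|^{2s}$ as $\eta\downarrow 0$ combined with the weak comparison principle for the regularized operator shows that $\phi_\eta$ is monotonically increasing as $\eta\downarrow 0$; set $\phi_0:=\lim_{\eta\downarrow 0}\phi_\eta$. Passing to the distributional limit, $\phi_0$ is a positive non-trivial function in $L^1(\Omega)$ with $\phi_0/|x|^{2s}\in L^1(\Omega)$ solving $L_\lambda\phi_0=\psi\geq 0$ weakly in $\Omega$. Lemma \ref{Lem1} then applies to $\phi_0$ and furnishes a ball $B_\rho(0)$ on which $\phi_0(x)\geq c|x|^{-\beta}$; by Fatou's lemma,
\begin{equation*}
c\int_{B_\rho(0)} g(x)|x|^{-\beta}\,dx\leq\int_\Omega g\,\phi_0\,dx\leq\liminf_{\eta\downarrow 0}\int_\Omega g\,\phi_\eta\,dx<\infty,
\end{equation*}
which is the desired conclusion. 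The main technical obstacle is twofold: verifying the regularity claim $\phi_\eta\in\mathcal{T}(\Omega)$, required for $\phi_\eta$ to be a legitimate test function in the very weak formulation of the equation for $u$; and, in the critical case $\lambda=\Lambda_{N,s}$, checking that the limit $\phi_0$ inherits the integrability hypotheses of Lemma \ref{Lem1}, since the energy estimate loses its coercivity margin there and a more delicate local analysis near the origin (e.g.\ by comparison with explicit power-type barriers of the form $|x|^{-\beta}\chi_{B_r}$) is needed to secure a uniform $L^1$-bound on $\phi_\eta/|x|^{2s}$.
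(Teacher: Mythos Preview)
Your approach is essentially the same duality argument the paper uses. The paper does not reprove Lemma~\ref{Lem2} directly (it cites \cite[Theorem 4.10]{MR3479207}), but the identical scheme appears in the necessary direction of the proof of Theorem~\ref{Thm1.5}: one builds approximate test functions $\phi_n\in\mathcal{T}(\Omega)$ with a truncated Hardy weight, uses them in the very weak formulation for $u$, exploits the cancellation of the Hardy cross terms (up to a non-negative remainder) to get $\int_\Omega g\,\phi_n\,dx\le \int_\Omega u\,dx$, and then invokes Lemma~\ref{Lem1} for the limit $\phi$ to extract the $|x|^{-\beta}$ lower bound.

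The only noteworthy difference is in how the approximating test functions are produced. You solve the regularized equation $(-\Delta)^s\phi_\eta-\lambda\phi_\eta/(|x|^{2s}+\eta)=\psi$ directly and then need a bootstrap to place $\phi_\eta$ in $\mathcal{T}(\Omega)$; the paper instead runs the iteration
\[
(-\Delta)^s\phi_n=\lambda\,\frac{\phi_{n-1}}{|x|^{2s}+\tfrac1n}+1,\qquad (-\Delta)^s\phi_0=1,
\]
which makes $\phi_n\in\mathcal{T}(\Omega)$ immediate by induction (the right-hand side is bounded and H\"older because $\phi_{n-1}$ is), and the monotonicity $\phi_{n-1}\le\phi_n\le\phi$ follows at once from the comparison principle. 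This sidesteps both obstacles you flag: no regularity bootstrap is needed, and the critical case $\lambda=\Lambda_{N,s}$ is handled because one never needs $\phi_0\in X_0^s(\Omega)$ or $\phi_0/|x|^{2s}\in L^1(\Omega)$ \emph{a priori}---one only needs the lower bound from Lemma~\ref{Lem1} for the monotone limit, and the uniform $L^1$ control on $g\phi_n$ comes from the duality identity itself rather than from an energy estimate on $\phi_n$. Your route is perfectly viable for $\lambda<\Lambda_{N,s}$; swapping your direct $\phi_\eta$ for the iterative $\phi_n$ would cleanly close the critical case as well.
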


\begin{lemma}[Weak Harnack inequality]
\label{Lem3}
Let $r>0$ such that $B_{2r}(0) \subset \Omega$. Assume that $f \geq 0$ and let $v \in X_0^s(\Omega)$, with $v \gneqq 0$ in $\mathbb{R}^N$, be a supersolution to 
\begin{equation*} 
\begin{cases}
(-\Delta )^s v = f & \quad \mathrm{in} \,\, \Omega, \\ v=0 & \quad \mathrm{in} \,\, \big(\mathbb{R}^N \setminus \Omega \big),
\end{cases}
\end{equation*}
i.e.
\begin{equation*} 
\int_{\mathbb{R}^N} (-\Delta)^{\frac{s}{2}} v (-\Delta)^{\frac{s}{2}} \phi \, dx \geq \int_{\Omega} f \phi \, dx,
\end{equation*}
for all non-negative $\phi \in X_0^s(\Omega)$. Then, for every $q < \frac{N}{N-2s}$ there exists a positive constant $C=C(N,s)$ such that
\begin{equation*} 
\Bigg( \int_{B_r(0)} v^q \, dx  \Bigg)^{\frac{1}{q}} \leq C \inf_{B_{\frac{3}{2}r}(0)}v. 
\end{equation*}
\end{lemma}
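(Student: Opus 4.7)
My plan is to follow the Moser--Krylov--Safonov iteration strategy adapted to the fractional setting, combined with a fractional BMO/John--Nirenberg bridging argument. As a preliminary reduction, since $f\ge 0$ the hypothesis gives $(-\Delta)^s v \ge 0$ in the weak sense, so I may discard $f$ and work with nonnegative supersolutions of the homogeneous equation; the global hypothesis $v\gneqq 0$ in $\mathbb{R}^N$ is crucial because it guarantees that the nonlocal tail contribution in every Caccioppoli-type test carries the favorable sign.

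The first technical step is a fractional Caccioppoli estimate for negative powers of $v$. Fix concentric balls $B_{\rho_1}\subset B_{\rho_2}\subset B_{2r}(0)$ and a cutoff $\eta\in C_c^{\infty}(B_{\rho_2})$ with $\eta\equiv 1$ on $B_{\rho_1}$ and $|\nabla\eta|\lesssim (\rho_2-\rho_1)^{-1}$. For $p>0$ and $\epsilon>0$, insert $\phi=(v+\epsilon)^{-p}\eta^{2}$ into the weak supersolution inequality. Exploiting the pointwise algebraic inequality
\begin{equation*}
(a-b)\bigl((a+\epsilon)^{-p}\alpha^{2}-(b+\epsilon)^{-p}\beta^{2}\bigr)\le -c_p\,\alpha\beta\,|w(a)-w(b)|^{2}+C_p\,|\alpha-\beta|^{2}\bigl(W(a)+W(b)\bigr),
\end{equation*}
with $w(t)=(t+\epsilon)^{(1-p)/2}$ and $W(t)=(t+\epsilon)^{1-p}$, and discarding the nonnegative tail $\int_{B_{\rho_2}}\bigl(\int_{\mathbb{R}^N\setminus B_{\rho_2}}|x-y|^{-N-2s}v(y)\,dy\bigr)\phi(x)\,dx$, one arrives at
\begin{equation*}
\iint_{\mathbb{R}^{2N}}\frac{|\eta(x)w(v(x))-\eta(y)w(v(y))|^{2}}{|x-y|^{N+2s}}\,dxdy\le \frac{C}{(\rho_2-\rho_1)^{2s}}\int_{B_{\rho_2}}(v+\epsilon)^{1-p}\,dx.
\end{equation*}
The fractional Sobolev embedding \eqref{Eq3.350} applied to $\eta\,w(v)$ turns this into a reverse Hölder inequality with gain factor $\kappa=N/(N-2s)>1$; standard Moser iteration over a geometric sequence of radii between $\tfrac{3r}{2}$ and $\tfrac{7r}{4}$ then yields
\begin{equation*}
\left(\int_{B_{7r/4}}(v+\epsilon)^{-p_{0}}\,dx\right)^{-1/p_{0}}\le C\,\inf_{B_{3r/2}}(v+\epsilon)
\end{equation*}
for some $p_0>0$, and letting $\epsilon\downarrow 0$ removes the regularization.

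To convert the negative-exponent bound into a genuine $L^{q_{0}}$ estimate, I would derive a logarithmic estimate by testing with $\phi=(v+\epsilon)^{-1}\eta^{2}$. This produces a fractional BMO bound for $w=\log(v+\epsilon)$ on dyadic subballs of $B_{2r}(0)$, and John--Nirenberg then gives $\int_{B_{7r/4}}e^{q_{0}|w-\langle w\rangle|}\,dx\le C$ for some $q_0>0$. Multiplying the resulting bounds on $\int v^{q_0}$ and $\int v^{-q_0}$ and combining with the Moser estimate above bridges the two sides and furnishes
\begin{equation*}
\left(\int_{B_{r}}v^{q_{0}}\,dx\right)^{1/q_{0}}\le C\,\inf_{B_{3r/2}}v.
\end{equation*}
Finally, to reach every exponent $q<N/(N-2s)$, I would invoke that any nonnegative supersolution with $L^{1}$ datum lies in $L^{N/(N-2s),\infty}_{\mathrm{loc}}(\Omega)$ (Marcinkiewicz, obtained from the Green function estimates for the restricted fractional Laplacian applied to a cutoff of $v$), and interpolate the $L^{q_{0}}$ bound against this weak-type bound to fill the range.

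\textbf{Main obstacle.} The hardest part is the algebraic inequality underlying the fractional Caccioppoli estimate: the nonlocal bilinear form couples values of $v$ at arbitrarily distant points, forcing a careful splitting of the double integral into the regions where $\eta(x)=\eta(y)=1$, where $\eta(x)\eta(y)=0$, and the mixed region. One must then absorb the potentially adverse cross term $|\eta(x)-\eta(y)|^{2}W$ into the good quadratic term modulo an $L^{1}$ tail controlled by $\int(v+\epsilon)^{1-p}$, all while keeping the long-range tail with the right sign using $v\ge 0$ on all of $\mathbb{R}^N$; this last point is precisely why the hypothesis $v\gneqq 0$ in $\mathbb{R}^N$ is indispensable.
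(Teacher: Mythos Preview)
The paper does not prove this lemma; it simply cites \cite[Theorem 3.4]{MR3479207}, noting that those authors ``proved it by following the classical Moser and Krylov--Safonov idea.'' Your proposal is precisely that strategy, so at the level of approach you are aligned with what the paper invokes.

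One point deserves correction. In your final step you appeal to a weak-$L^{N/(N-2s)}$ bound ``obtained from the Green function estimates for the restricted fractional Laplacian applied to a cutoff of $v$'' in order to interpolate from $q_0$ up to the full range $q<\frac{N}{N-2s}$. But you have already discarded $f$ and are working only with $(-\Delta)^s v\ge 0$; there is no $L^1$ datum to feed into a Green-function representation, and a cutoff of a supersolution is not itself a solution of anything with controlled right-hand side. The standard (and self-contained) way to reach every $q<\frac{N}{N-2s}$ is instead to run a second Moser iteration in the \emph{upward} direction: for $0<p<1$ the test function $(v+\epsilon)^{p-1}\eta^2$ is admissible for a supersolution and yields a reverse H\"older inequality that pushes the exponent from $q_0$ up toward the Sobolev threshold $\kappa=\frac{N}{N-2s}$. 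This is the companion to your downward iteration and is what the cited reference actually does; with it your sketch is complete.
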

For the proof of these lemmas see \cite[Lemma 3.10]{MR3479207}, \cite[Theorem 4.10]{MR3479207} and \cite[Theorem 3.4]{MR3479207}, respectively.

In the next two theorems we have our existence results to problem \eqref{Eq1}. At first, we will prove that for $0< \lambda < \Lambda_{N,s}$, and $\gamma \geq 1$ the problem \eqref{Eq1} admits a solution for the case $\mu \in L^1(\Omega)$, and $ f \in L^1(\Omega) \cap X^{-s}(\Omega)$. It is crucial to indicate that our approach in the proof of Theorem \ref{Thm1}, only works for the case $\gamma \geq 1$. However if we further assume that $\mu \in L^{m}(\Omega)$, $m=(\frac{2_s^*}{1-\gamma})'$ ($p'$ denotes the conjugate exponent of $p$) then the same approach works for $\gamma<1$. For a result about the existence with less regularity assumption on $\mu$, see \cite[Theorem 5.3]{MR3479207}. More precisely, the authors showed an existence result for the case $\mu \in L^1(\Omega, |x|^{-(1-\gamma)\beta} \, dx)$.

In the following we denote
\begin{equation*} 
T_n(\sigma)=\begin{cases} \sigma & |\sigma| \leq n \\ n \frac{\sigma}{|\sigma|} & |\sigma| \geq n \end{cases}
\end{equation*}
the usual truncation operator and $G_n(\sigma):=\sigma-T_n(\sigma)$.

\begin{theorem} 
\label{Thm1}
Let $ s \in (0,1)$, $0 < \lambda <\Lambda_{N,s}=\frac {4^s\Gamma^2(\frac{N+2s}4)}{\Gamma^2(\frac{N-2s}4)}$, and $\gamma > 0$. Also assume that $\mu \in L^1(\Omega)$ is a non-negative function and $0 \leq f \in L^1(\Omega) \cap X^{-s}(\Omega)$.
\begin{enumerate}
\item
If $ \gamma=1$, then there is a positive weak solution in $X_0^{s}(\Omega)$ to problem \eqref{Eq1}.
\item
If $\gamma>1$, then there is a positive weak solution in $X_{\mathrm{loc}}^{s}(\Omega)$ to problem \eqref{Eq1} with $T_k^{\frac{\gamma+1}{2}}(u) \in X_0^{s}(\Omega)$ and $G_k(u) \in X_0^{s}(\Omega)$. In addition, if $\frac{4\gamma}{(\gamma+1)^2}>\frac{\lambda}{\Lambda_{N,s}}$, then $u^{\frac{\gamma+1}{2}} \in X_0^{s}(\Omega)$.
\item
If $ \gamma<1$, and furthermore $\mu \in L^{\big(\frac{2_s^*}{1-\gamma}\big)'}(\Omega)$, then there is a positive weak solution in $X_0^{s}(\Omega)$ to problem \eqref{Eq1}.
\end{enumerate}
\end{theorem}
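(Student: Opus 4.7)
The plan is to construct the solution as a monotone limit of solutions to approximating problems with bounded, non-singular data, then pass to the limit by exploiting uniform a priori estimates derived from the fractional Hardy inequality. For each $n \in \mathbb{N}$, I would consider the regularized problem
\begin{equation*}
\begin{cases}
(-\Delta)^s u_n = \lambda \dfrac{u_n}{|x|^{2s} + \tfrac{1}{n}} + \dfrac{T_n(\mu)}{(u_n + \tfrac{1}{n})^\gamma} + T_n(f) & \text{in } \Omega, \\
u_n = 0 & \text{in } \mathbb{R}^N \setminus \Omega,
\end{cases}
\end{equation*}
in which every singular term is truncated, and obtain a non-negative solution $u_n \in X_0^s(\Omega) \cap L^\infty(\Omega)$ via a Schauder fixed point argument: the map $T : L^2(\Omega) \to L^2(\Omega)$ sending $v \geq 0$ to the unique $w \in X_0^s(\Omega)$ solving the associated linear problem with $v$ frozen on the right-hand side is compact via the embedding \eqref{Eq3.350} and maps a suitable ball into itself, since its image is $L^\infty$-bounded.

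Once $\{u_n\}$ is in hand, a standard comparison argument shows it is monotone non-decreasing in $n$, hence it converges pointwise to some $u$. Applying Hopf's lemma to $u_1$ produces a uniform lower bound $u_n \geq u_1 \geq c_K \delta^s$ on every $K \Subset \Omega$; this ensures the singular term remains well-defined in the limit and that dominated/monotone convergence later applies. The a priori estimate depends on $\gamma$: for $\gamma = 1$ (case 1), testing with $u_n$ itself yields
\begin{equation*}
\|u_n\|^2_{X_0^s(\Omega)} \leq \frac{\lambda}{\Lambda_{N,s}} \|u_n\|^2_{X_0^s(\Omega)} + \|\mu\|_{L^1(\Omega)} + \|f\|_{X^{-s}(\Omega)} \|u_n\|_{X_0^s(\Omega)},
\end{equation*}
after applying the fractional Hardy inequality \eqref{Eq3.05}; since $\lambda < \Lambda_{N,s}$, Young's inequality closes the bound. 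Case 3 ($\gamma < 1$) is similar, except that the singular term is controlled by H\"older's inequality with exponent $(2_s^*/(1-\gamma))'$ on $\mu$ and the Sobolev embedding on $u_n^{1-\gamma}$.

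The delicate case is $\gamma > 1$, where $u^{1-\gamma}$ is no longer a natural energy-level quantity. Here I would test with $T_k^\gamma(u_n)$ and, separately, with $G_k(u_n)$: the algebraic inequality
\begin{equation*}
(a-b)(a^\gamma - b^\gamma) \geq \tfrac{4\gamma}{(\gamma+1)^2}\bigl(a^{(\gamma+1)/2} - b^{(\gamma+1)/2}\bigr)^2, \qquad a,b\geq 0,
\end{equation*}
transforms the left-hand side into the Dirichlet form of $T_k^{(\gamma+1)/2}(u_n)$, while the singular contribution is bounded by $\|\mu\|_{L^1(\Omega)}$; the Hardy term is then absorbed using \eqref{Eq3.05} applied to $T_k^{(\gamma+1)/2}(u_n)$ provided $\tfrac{4\gamma}{(\gamma+1)^2} > \lambda/\Lambda_{N,s}$, which yields $u^{(\gamma+1)/2} \in X_0^s(\Omega)$. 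Without this sharper constraint, one still recovers $u \in X^{s}_{\mathrm{loc}}(\Omega)$ with $T_k^{(\gamma+1)/2}(u), G_k(u) \in X_0^s(\Omega)$ by combining the truncation bound with a cutoff localization. Passing to the limit in \eqref{Eq3.4} then uses Fatou's lemma on the singular term (via the uniform lower bound on compacts), weak convergence in the energy space, and strong $L^2_{\mathrm{loc}}$ convergence to identify the Hardy term. The main obstacle I anticipate is the joint control of the Hardy potential and the singular nonlinearity in case 2: the interaction forces one to invoke the non-trivial algebraic inequality above with the exact constant $\Lambda_{N,s}$, and the threshold $\tfrac{4\gamma}{(\gamma+1)^2} > \lambda/\Lambda_{N,s}$ emerges precisely as the feasibility condition for the absorption step to succeed globally.
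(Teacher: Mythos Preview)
Your overall scheme---approximation, Schauder fixed point, uniform lower bound on compacts, a priori estimates, passage to the limit---matches the paper's, and cases~1 and~3 are essentially as the authors do them. The handling of case~2 ($\gamma>1$), however, has a concrete gap in the a~priori estimate.

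When you test with $T_k^\gamma(u_n)$, the Hardy contribution is $\lambda\int_\Omega u_n\,T_k^\gamma(u_n)\,|x|^{-2s}\,dx$, and this is \emph{not} controlled by $\lambda\int_\Omega [T_k^{(\gamma+1)/2}(u_n)]^2\,|x|^{-2s}\,dx$: on the set $\{u_n>k\}$ one has $u_n T_k^\gamma(u_n)=k^\gamma u_n$, which can be arbitrarily larger than $T_k^{\gamma+1}(u_n)=k^{\gamma+1}$. So the absorption you describe does not close, and the constraint $\tfrac{4\gamma}{(\gamma+1)^2}>\lambda/\Lambda_{N,s}$ is not what makes the $T_k$--bound work. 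Your fallback ``cutoff localization'' is too vague to deliver the global conclusions $T_k^{(\gamma+1)/2}(u),\,G_k(u)\in X_0^s(\Omega)$.

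The paper's mechanism is different in order. It first tests with $G_k(w_n)$: the Hardy term splits as $\int G_k^2(w_n)\,|x|^{-2s}+k\int G_k(w_n)\,|x|^{-2s}$; the first piece is absorbed by Hardy (for every $\lambda<\Lambda_{N,s}$, no extra constraint), the second by H\"older--Sobolev, and the singular term is $\leq k^{1-\gamma}\|\mu\|_{L^1}$. This yields a uniform bound on $\|G_k(w_n)\|_{X_0^s}$ and hence on $\int w_n^2\,|x|^{-2s}$. \emph{Only then} does one test with $T_k^\gamma(w_n)$: the Hardy term is now estimated by $k^{\gamma-1}\int w_n^2\,|x|^{-2s}\leq C(k)$, a constant rather than something to absorb, and the bound on $T_k^{(\gamma+1)/2}(w_n)$ follows for all $\lambda<\Lambda_{N,s}$. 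The constraint $\tfrac{4\gamma}{(\gamma+1)^2}>\lambda/\Lambda_{N,s}$ enters only at the very end, testing with the untruncated $w_n^\gamma$, to upgrade to $u^{(\gamma+1)/2}\in X_0^s(\Omega)$. A secondary difference: you regularize the Hardy weight, whereas the paper keeps $|x|^{-2s}$ intact in its approximations and runs Schauder inside a cone $\mathcal{C}$ capturing the correct behaviour near the origin and the boundary; your choice buys $L^\infty$ approximants and monotonicity but does not by itself cure the estimate issue above.
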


The next theorem gives a necessary and sufficient condition for the existence result to problem \eqref{Eq1}. 

\begin{theorem}[A necessary and sufficient condition for the existence result]
\label{Thm1.5}
Let $ s \in (0,1)$, $0 < \lambda \leq \Lambda_{N,s}$, and $\gamma>0$. Also assume that $0 \leq f, \mu \in L^1(\Omega)$. Then problem \eqref{Eq1} has a positive weak solution if and only if
\begin{equation}
\label{IntegrabilityC}
\int_{\Omega} \frac{f(x)}{|x|^{\beta}} \, dx < + \infty, \qquad \int_{\Omega} \frac{\mu}{\delta^{s (\gamma-1)}} \, dx < + \infty.
\end{equation}
Moreover, the solution $u$ has the following regularity:
\begin{itemize}
\item
$T_k(u) \in X_0^s(\Omega)$ for all $k>0$ and $u \in L^p(\Omega)$ for all $p \in [1, \frac{N}{N-2s})$.
\item
$(-\Delta )^{\frac{s}{2}} u \in L^p(\Omega)$, for all $p \in [1, \frac{N}{N-s})$.
\item 
$u \in X_0^{s_1,p}(\Omega)$, for all $s_1<s$ and for all $p<\frac{N}{N-s}$.
\end{itemize}
\end{theorem}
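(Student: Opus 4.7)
The plan is to prove the two implications separately and then derive the regularity statements from the approximation scheme used in the sufficiency direction.

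For the necessity direction, assume $u$ is a positive weak solution to \eqref{Eq1}. Setting $g=\mu u^{-\gamma}+f\ge 0$, the weak formulation together with the positivity of the right-hand side makes $u$ a very weak solution of the linear Hardy problem of Lemma \ref{Lem2}; note that $g\in L^{1}(\Omega)$ is already built into the weak formulation (the singular term $\mu u^{-\gamma}$ is integrable because $u\ge c\delta^{s}$ by the Hopf-type bound recalled in Remark \ref{RemarkADD01}, together with the eigenfunction test). Therefore Lemma \ref{Lem2} gives a ball $B_{r}(0)\Subset\Omega$ on which $f(x)|x|^{-\beta}\in L^{1}$; since $|x|^{-\beta}$ is bounded on $\Omega\setminus B_{r}(0)$ and $f\in L^{1}(\Omega)$, the first condition in \eqref{IntegrabilityC} follows. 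The second condition is exactly the one derived in Remark \ref{RemarkADD01} by testing \eqref{Eq3.4} against the first eigenfunction $\psi_{1,s}$ and invoking $u\ge c\delta^{s}$.

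For the sufficiency direction, I will run a double truncation. Let $\mu_{n}=T_{n}(\mu)$, $f_{n}=T_{n}(f)$ and consider
\begin{equation*}
(-\Delta)^{s}u_{n}=\lambda\frac{u_{n}}{|x|^{2s}+\frac{1}{n}}+\frac{\mu_{n}}{(u_{n}+\frac{1}{n})^{\gamma}}+f_{n}\quad\text{in }\Omega,\qquad u_{n}\equiv0\text{ in }\mathbb{R}^{N}\setminus\Omega.
\end{equation*}
Because all weights are now bounded, existence of a positive $u_{n}\in X_{0}^{s}(\Omega)\cap L^{\infty}(\Omega)$ follows by a standard Schauder / sub-supersolution argument in the spirit of Theorem \ref{Thm1}, and the comparison principle for $(-\Delta)^{s}$ gives monotonicity $u_{n}\le u_{n+1}$. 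To get uniform bounds I will test with $\psi_{1,s}$ to control $\int_{\Omega}\mu_{n}(u_{n}+1/n)^{-\gamma}\psi_{1,s}\,dx$ using the second condition of \eqref{IntegrabilityC}, and for the Hardy term I will choose as test function a solution $\phi\in\mathcal{T}(\Omega)$ of $(-\Delta)^{s}\phi=1$; the pointwise lower bound $u_{n}\gtrsim|x|^{-\beta}$ on a small ball given by Lemma \ref{Lem1} (applied to $u_{n}$, which is a supersolution of the purely Hardy problem) together with $\int_{\Omega}f|x|^{-\beta}\,dx<\infty$ closes the estimate and yields a uniform $L^{1}$ bound on the whole right-hand side. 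Then $u=\lim u_{n}$ exists by monotone convergence, and the two singular terms pass to the limit by Fatou/monotone convergence; the Hardy term passes by the dominated convergence from the $|x|^{-\beta}$-weighted bound.

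For regularity, I will use $T_{k}(u_{n})$ as a test function: this gives $\|T_{k}(u_{n})\|_{X_{0}^{s}}^{2}\le Ck\bigl(\|f\|_{L^{1}}+\|\mu\delta^{-s(\gamma-1)}\|_{L^{1}}+\text{Hardy term}\bigr)$, hence $T_{k}(u)\in X_{0}^{s}(\Omega)$ for every $k>0$. The $L^{1}$ right-hand side combined with the Marcinkiewicz / Calder\'on–Zygmund estimates for $(-\Delta)^{s}$ with Hardy potential (Section 4 of \cite{MR3479207}) gives $u\in L^{p}(\Omega)$ for all $p<\tfrac{N}{N-2s}$ and $(-\Delta)^{s/2}u\in L^{p}(\Omega)$ for all $p<\tfrac{N}{N-s}$, and the last inclusion in $X_{0}^{s_{1},p}(\Omega)$ for $s_{1}<s$ follows from the continuous embeddings between fractional Sobolev spaces and \eqref{EqEqEqOO.}.

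The main obstacle is the uniform control of the Hardy term: without the pointwise lower bound of Lemma \ref{Lem1}, there is no mechanism linking the integrability condition $\int_{\Omega}f|x|^{-\beta}\,dx<\infty$ to an a priori estimate on $\int_{\Omega}u_{n}|x|^{-2s}\,dx$, and one risks losing mass at the origin when passing to the limit. The coupling between the Hardy supersolution bound $u_{n}\gtrsim|x|^{-\beta}$ and the weighted integrability of $f$ is precisely what makes the condition \eqref{IntegrabilityC} both necessary and sufficient, and it is the crucial nontrivial point in the argument.
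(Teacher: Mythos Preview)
Your necessity argument is sound and actually more direct than the paper's: you invoke Lemma~\ref{Lem2} on $g=\mu u^{-\gamma}+f$ in one stroke, whereas the paper builds an increasing sequence $\phi_n\in\mathcal{T}(\Omega)$ converging to the solution of $(-\Delta)^s\phi-\lambda\phi|x|^{-2s}=1$ and tests with each $\phi_n$ to reach the same conclusion.

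The sufficiency argument, however, has a genuine gap at exactly the point you flag as crucial. Testing your approximating equation with $\phi$ solving $(-\Delta)^s\phi=1$ yields
\[
\int_\Omega u_n\,dx=\lambda\int_\Omega\frac{u_n\phi}{|x|^{2s}+\frac{1}{n}}\,dx+\int_\Omega\frac{\mu_n\phi}{(u_n+\frac{1}{n})^\gamma}\,dx+\int_\Omega f_n\phi\,dx,
\]
and the Hardy integral sits on the \emph{right}-hand side with nothing to absorb it. The lower bound $u_n\gtrsim|x|^{-\beta}$ from Lemma~\ref{Lem1} makes that integral \emph{larger}, not smaller, so it cannot ``close the estimate'' as you claim; and since this $\phi$ is bounded, the weighted condition $\int_\Omega f|x|^{-\beta}\,dx<\infty$ never enters. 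The same obstruction arises when you test with $\psi_{1,s}$.

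The missing idea is to choose instead $\phi$ solving the \emph{Hardy} problem $(-\Delta)^s\phi-\lambda\phi|x|^{-2s}=1$. Then dual testing (use $\phi$ in the $u_n$-equation and $u_n$ in the $\phi$-equation) makes the two Hardy integrals cancel up to a favorable sign, leaving
\[
\int_\Omega u_n\,dx\le\int_\Omega f_n\phi\,dx+\int_\Omega\frac{\mu\phi}{(u_{n-1}+\frac{1}{n})^\gamma}\,dx.
\]
Now $\phi\sim|x|^{-\beta}$ near the origin is precisely where the condition $\int_\Omega f|x|^{-\beta}\,dx<\infty$ is spent, and $\phi\sim\delta^s$ near $\partial\Omega$ together with a Hopf bound $u_0\ge c\delta^s$ on the first iterate consumes the second integrability condition. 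The paper also linearises the scheme (placing $u_{n-1}$ rather than $u_n$ in the Hardy and singular weights, starting from $(-\Delta)^s u_0=f_1$), which makes monotonicity and the cancellation clean and sidesteps the coercivity failure at $\lambda=\Lambda_{N,s}$ that your nonlinear Schauder step would face.
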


\begin{remark}
A similar argument as in \cite[Example 3.3]{MR2257147} but with the fractional Laplacian instead of the Laplacian operator shows that problem \eqref{Eq1} does not admit a solution for merely $f \in L^1(\Omega)$.
\end{remark}

The proof of these theorems will appear in the next section. In the following, we will have a non-existence and also a blow-up result for the case that $\lambda >\Lambda_{N,s}$.

The following non-existence result is an immediate consequence of Lemma \ref{Lem1} and Lemma \ref{Lem2}. More precisely, it is well known that the linear problem with Hardy potential has non positive supersolution if $\lambda > \Lambda_{N,s}$. We only bring it here for completeness.
\begin{theorem} 
\label{Thm2}
Let $ s \in (0,1)$, $ \lambda > \Lambda_{N,s}$ and $\gamma >0$. Then there is no positive very weak solution to problem \eqref{Eq1}.
\end{theorem}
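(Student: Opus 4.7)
I would argue by contradiction. Suppose, for some $\lambda > \Lambda_{N,s}$, that $u$ is a positive very weak solution to \eqref{Eq1}. The goal is to combine Lemma \ref{Lem1} and Lemma \ref{Lem2}, both applied at the critical value $\lambda = \Lambda_{N,s}$, with the strict inequality $\lambda > \Lambda_{N,s}$ supplying the contradiction via non-integrability at the origin.

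The first step is an algebraic rewriting of the equation. From the very weak formulation, we have
\begin{equation*}
(-\Delta)^{s} u - \Lambda_{N,s}\,\frac{u}{|x|^{2s}} \;=\; (\lambda-\Lambda_{N,s})\,\frac{u}{|x|^{2s}} + \frac{\mu}{u^{\gamma}} + f \;=:\; g
\end{equation*}
in the distributional sense. Being a very weak solution, $u \in L^1(\Omega)$, $u/|x|^{2s} \in L^1(\Omega)$, $\mu/u^{\gamma} \in L^1(\Omega)$ and $f \in L^1(\Omega)$; moreover each summand in $g$ is non-negative because $\lambda > \Lambda_{N,s}$ and $u,\mu,f\ge 0$. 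Hence $0 \leq g \in L^1(\Omega)$, so $u$ is a positive very weak solution of the linear problem in Lemma \ref{Lem2} with parameter $\Lambda_{N,s}$ and datum $g$. Lemma \ref{Lem2} then forces
\begin{equation*}
g(x)\,|x|^{-\beta} \in L^1(B_r(0))
\end{equation*}
for some $B_r(0)\Subset\Omega$, where $\beta$ is produced by \eqref{Eq3...01} at $\lambda=\Lambda_{N,s}$, namely $\alpha=0$ and $\beta=\frac{N-2s}{2}$.

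The second step is to use Lemma \ref{Lem1} to pin down how singular $u$ must be at the origin. Since $g\geq 0$, $u$ is in particular a non-negative weak supersolution to $(-\Delta)^{s} u - \Lambda_{N,s}\,u/|x|^{2s} \geq 0$ satisfying the hypotheses of Lemma \ref{Lem1} (positivity, $L^1$-integrability, $u/|x|^{2s}\in L^1$, and vanishing outside $\Omega$). Lemma \ref{Lem1} applied at $\lambda=\Lambda_{N,s}$ yields a $\delta>0$ and $C>0$ with
\begin{equation*}
u(x) \;\geq\; C\,|x|^{-\tfrac{N-2s}{2}} \qquad\text{in } B_{\delta}(0).
\end{equation*}

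The final step assembles these two facts. On $B_{\rho}(0)$ with $\rho := \min(r,\delta)$, we use the pointwise bound on $u$ inside the Hardy-type part of $g$:
\begin{equation*}
\int_{B_{\rho}(0)} g(x)\,|x|^{-\tfrac{N-2s}{2}}\,dx \;\geq\; (\lambda-\Lambda_{N,s})\,C\int_{B_{\rho}(0)} |x|^{-\tfrac{N-2s}{2}-2s-\tfrac{N-2s}{2}}\,dx \;=\; (\lambda-\Lambda_{N,s})\,C\int_{B_{\rho}(0)} |x|^{-N}\,dx,
\end{equation*}
which diverges because $\lambda>\Lambda_{N,s}$. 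This contradicts the integrability $g\,|x|^{-\beta}\in L^1(B_r(0))$ provided by Lemma \ref{Lem2}, and the proof is complete. The whole argument is short; the only delicate point is verifying that Lemmas \ref{Lem1} and \ref{Lem2} are indeed applicable at the endpoint $\lambda=\Lambda_{N,s}$ (which corresponds to $\alpha=0$ in \eqref{Eq3...01}, where the two indicial roots of the Hardy operator coalesce). Since both lemmas are stated under $\lambda\leq\Lambda_{N,s}$, this is permitted, and no further technicalities are needed.
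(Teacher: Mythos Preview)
Your proof is correct and follows essentially the same argument as the paper: rewrite the equation at the critical level $\Lambda_{N,s}$, apply Lemma~\ref{Lem2} to get $g|x|^{-\beta}\in L^1(B_r(0))$ with $\beta=\tfrac{N-2s}{2}$, apply Lemma~\ref{Lem1} to get $u\gtrsim |x|^{-(N-2s)/2}$ near the origin, and combine to force $|x|^{-N}\in L^1(B_r(0))$. The paper proceeds identically, the only cosmetic difference being that it isolates $g=\mu/u^\gamma+f$ separately from the term $(\lambda-\Lambda_{N,s})u/|x|^{2s}$ before invoking the lemmas.
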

\begin{proof}
We argue by contradiction. Let $u$ be a positive very weak solution to problem \eqref{Eq1}. Therefore $u$ satisfies 
\begin{equation*} 
\begin{cases}
(-\Delta )^s u - \Lambda_{N,s} \dfrac{u}{|x|^{2s}} = (\lambda - \Lambda_{N,s}) \dfrac{u}{|x|^{2s}} + g & \quad \mathrm{in} \,\, \Omega, \\ u>0 & \quad \mathrm{in} \,\, \Omega, \\ u=0 & \quad \mathrm{in} \,\, \big(\mathbb{R}^N \setminus \Omega \big),
\end{cases}
\end{equation*}
where $g= \frac{\mu}{u^{\gamma}}+f(x)$. Then by using Lemma \ref{Lem2} and the positivity of $g$ necessarily:
\begin{equation}
\label{Eq3.5}
\Big( (\lambda - \Lambda_{N,s}) \dfrac{u}{|x|^{2s}} \Big) |x|^{-\beta} \in L^1(B_r(0)),
\end{equation}
for some $B_r(0) \Subset \Omega$. On the other hand, by  Lemma \ref{Lem1} we have
\begin{equation}
\label{Eq3.6}
u(x) \geq C |x|^{-\beta}, \qquad \mathrm{in} \,\, B_{r}(0),
\end{equation}
for sufficiently small $r$, where $\beta= \frac{N-2s}{2}-\alpha$ and $\alpha \in [0, \frac{N-2s}{2}) $ is given by the identity
\begin{equation*} 
\frac{{4^s}\Gamma^2(\frac{N+2s}4)}{\Gamma^2(\frac{N-2s}4)} = \frac{4^{s} \Gamma(\frac{N+2s+2\alpha}{4}) \Gamma(\frac{N+2s-2\alpha}{4})}{\Gamma(\frac{N-2s+2\alpha}{4}) \Gamma(\frac{N-2s-2\alpha}{4})}. 
\end{equation*}
The properties of the Gamma function implies $\alpha=0$, see the proof of \cite[Lemma 3.3]{MR3492734}. Now, by combining \eqref{Eq3.5} and \eqref{Eq3.6} we obtain that $|x|^{-N} \in L^1(B_r(0))$, which is a contradiction.
\end{proof}

This non-existence result is strong in the sense that a complete blow-up phenomenon occurs. Namely, if $u_n$ is the solution to the following approximated problem with $\lambda>\Lambda_{N,s}$, where the Hardy potential is substituted by the bounded weight $(|x|^{2s}+\frac{1}{n})^{-1}$, and the singular nonlinearity is substituted by $\frac{\min\{\mu,n\}}{(u_n+\frac{1}{n})^\gamma}$, then $u_n(x_0) \to \infty$, for any $x_0 \in \Omega$, as $n \to \infty$.
\begin{equation}
\label{Eq-Blow-up}
\begin{cases}
(-\Delta )^s u_n = \lambda \dfrac{u_n}{|x|^{2s}+\frac{1}{n}} + \dfrac{\min\{\mu,n\}}{(u_n+\frac{1}{n})^\gamma} + \min\{f,n\} & \quad \mathrm{in} \,\, \Omega, \\ u_n>0 & \quad \mathrm{in} \,\, \Omega, \\ u_n=0 & \quad \mathrm{in} \,\, \big(\mathbb{R}^N \setminus \Omega \big).
\end{cases}
\end{equation}

In the same sprite of Theorem \ref{Thm2}, the proof of this blow-up phenomenon can be obtained taking into consideration that any approximating sequence of non-negative supersolution to the linear problem blow-up in any point of $\Omega$, if $\lambda>\Lambda_{N,s}$, as it is proved in \cite{MR3479207}.

\section{Proof of Theorem \ref{Thm1} and Theorem \ref{Thm1.5}}
\label{Section3}
First of all we prove Theorem \ref{Thm1}. For this purpose let consider the following auxiliary problem: 
\begin{equation}
\label{Eq4}
\begin{cases}
(-\Delta )^s u = \lambda \dfrac{u}{|x|^{2s}} + g & \quad \mathrm{in} \,\, \Omega, \\ u>0 & \quad \mathrm{in} \,\, \Omega, \\ u=0 & \quad \mathrm{in} \,\, \big(\mathbb{R}^N \setminus \Omega \big),
\end{cases}
\end{equation}
where $g \in X^{-s}(\Omega)$. The function $u \in X_0^s(\Omega)$ is a weak energy solution to the above problem if $u \equiv 0$ in $\big(\mathbb{R}^N \setminus \Omega \big)$ and
\begin{equation*} 
\int_{\mathbb{R}^N} (-\Delta)^{\frac{s}{2}} u (-\Delta)^{\frac{s}{2}} \phi \, dx = \lambda \int_{\Omega} \frac{u \phi}{|x|^{2s}} \, dx + \langle g, \phi \rangle_{X^{-s}(\Omega),X_0^s(\Omega)}, \qquad \phi \in X_0^s(\Omega).
\end{equation*}
Here $\langle \cdot, \cdot \rangle_{X^{-s}(\Omega),X_0^s(\Omega)}$ denotes the duality pairing between $X^{-s}(\Omega)$ and $X_0^s(\Omega)$.

The proof of the following Proposition about the existence result for \eqref{Eq4}, can be obtained by using the Hardy inequality and the classical variational methods. See for instance \cite[Section 4.6]{MR3890060}. Also, the uniqueness of the weak energy solution to \eqref{Eq4} follows from the strict monotonicity of the operator $(-\Delta)^{s} u - \lambda \frac{u}{|x|^{2s}}$, for $ 0 \leq \lambda <\Lambda_{N,s}$. The strict monotonicity of this operator is the direct consequence of the Hardy inequality.

\begin{proposition}
\label{Pro1}
If $ g(x) \in L^{2}(\Omega) $, $s \in (0,1)$ and $ 0<\lambda < \Lambda_{N,s} $, then there exists a unique positive weak energy solution to \eqref{Eq4} in $ X_0^s(\Omega)$.
\end{proposition}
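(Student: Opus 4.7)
The plan is to obtain the solution via the direct method of the calculus of variations. I would associate to \eqref{Eq4} the energy functional $J : X_0^s(\Omega) \to \mathbb{R}$ given by
\begin{equation*}
J(u) = \frac{1}{2} \int_{\mathbb{R}^N} |(-\Delta)^{s/2} u|^2\, dx - \frac{\lambda}{2} \int_\Omega \frac{u^2}{|x|^{2s}}\, dx - \int_\Omega g\, u\, dx.
\end{equation*}
This functional is well-defined on $X_0^s(\Omega)$: by \eqref{EqEqEqOO.} the first term equals $\tfrac{C_{N,s}}{4}\|u\|_{X_0^s(\Omega)}^2$, the Hardy term is finite by \eqref{Eq3.05}, and the last term is controlled using $g \in L^2(\Omega)$ together with the continuous Sobolev embedding $X_0^s(\Omega) \hookrightarrow L^2(\Omega)$ from \eqref{Eq3.350}. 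A standard differentiability calculation shows that $J \in C^1(X_0^s(\Omega))$ and that its critical points are precisely the weak energy solutions to \eqref{Eq4}.

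Next, the strict inequality $\lambda < \Lambda_{N,s}$ makes the quadratic part of $J$ equivalent to $\|\cdot\|_{X_0^s(\Omega)}^2$. Indeed, by the fractional Hardy inequality \eqref{Eq3.05}, for every $u \in X_0^s(\Omega)$,
\begin{equation*}
\frac{1}{2}\int_{\mathbb{R}^N}|(-\Delta)^{s/2} u|^2\, dx - \frac{\lambda}{2}\int_\Omega \frac{u^2}{|x|^{2s}}\, dx \;\geq\; \frac{1}{2}\Bigl(1 - \frac{\lambda}{\Lambda_{N,s}}\Bigr) \int_{\mathbb{R}^N}|(-\Delta)^{s/2} u|^2\, dx.
\end{equation*}
Since this quadratic form is convex and continuous, it is weakly lower semicontinuous; combined with the continuity of the linear term in $g$ and a Cauchy--Schwarz absorption, this yields both the coercivity and the weak l.s.c.\ of $J$ on $X_0^s(\Omega)$. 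The direct method of the calculus of variations then produces a minimizer $u \in X_0^s(\Omega)$, which is a weak energy solution of \eqref{Eq4}.

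For positivity, which is the relevant situation when $g \geq 0$ (the case inherited from \eqref{Eq1}), I would use the pointwise inequality $\bigl||u|(x)-|u|(y)\bigr| \leq |u(x)-u(y)|$, which together with $|u|^2 = u^2$ yields $J(|u|) \leq J(u)$, so one may assume $u \geq 0$. Strict positivity $u > 0$ in $\Omega$ then follows, provided $g \not\equiv 0$, from the strong maximum principle for the Hardy-perturbed operator, or equivalently from the weak Harnack estimate of Lemma \ref{Lem3}. Uniqueness is an immediate consequence of the strict monotonicity of $u \mapsto (-\Delta)^s u - \lambda u/|x|^{2s}$: subtracting the weak formulations for two solutions $u_1, u_2$ and testing with $u_1 - u_2$ gives, via the same Hardy bound, $\bigl(1-\tfrac{\lambda}{\Lambda_{N,s}}\bigr)\|u_1-u_2\|_{X_0^s(\Omega)}^2 \leq 0$, which forces $u_1 \equiv u_2$. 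The only genuine subtlety in this plan is the passage from $u \geq 0$ to $u > 0$, since the Hardy perturbation obstructs elementary pointwise arguments and makes it necessary to appeal to the nonlocal weak Harnack inequality.
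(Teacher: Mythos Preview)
Your proposal is correct and follows essentially the same approach the paper indicates: the paper does not give a detailed proof but simply states that existence follows from the Hardy inequality together with classical variational methods, and that uniqueness is a consequence of the strict monotonicity of $(-\Delta)^s u - \lambda u/|x|^{2s}$, which is precisely the direct-method argument and the monotonicity argument you have spelled out. Your treatment of positivity via $J(|u|)\leq J(u)$ together with the weak Harnack inequality (Lemma~\ref{Lem3}) supplies a detail the paper leaves implicit.
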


Before to continue, we need to define the set $\mathcal{C}$ as the set of functions $v \in L^{2}(\Omega)$ such that there exist positive constants $k_1$ and $k_2$ such that
\begin{equation*} 
k_1\delta^s(x) \leq |x|^{\beta} v(x)  \leq k_2 \delta^s(x),
\end{equation*}
where the constant $\beta$ is given in Lemma \ref{Lem1}, and $ \delta(x)= \mathrm{dist}(x,\partial \Omega)$, $x \in \Omega$, is the distance function from the boundary $\partial \Omega$.

Now, for every $v \in \mathcal{C}$, define $\Phi(v)=w$ where $w \in X_0^s(\Omega)$ is the unique solution to the following problem for any fixed $n$:
\begin{equation}
\label{Eq5}
\begin{cases}
(-\Delta )^s w = \lambda \dfrac{w}{|x|^{2s}} +  \dfrac{\mu_n}{(|v|+\frac{1}{n})^{\gamma}} + f_n(x) & \quad \mathrm{in} \,\, \Omega,\\ w>0 & \quad \mathrm{in} \,\, \Omega, \\ w=0 & \quad \mathrm{in} \,\, \big(\mathbb{R}^N \setminus \Omega \big). 
\end{cases}
\end{equation}
Here $ f_n = T_n(f) $, and $\mu_n = T_n(\mu)$ are the truncations at level $n$.

By Lemma \ref{Lem1}, \cite[Theorem 4.1]{MR3479207} and a result of \cite{MR3168912} it easily follows that $w \in \mathcal{C}$. If we show that $\Phi : \mathcal{C} \to \mathcal{C} $ has a fixed point $w_n$, then $w_n \in \mathcal{C}$ will be the weak solution to the following problem in $X_0^s(\Omega)$.
\begin{equation}
\label{Eq6}
\begin{cases}
(-\Delta )^s w_n = \lambda \dfrac{w_n}{|x|^{2s}} +  \dfrac{\mu_n}{(w_n+\frac{1}{n})^{\gamma}} + f_n(x) & \quad \mathrm{in} \,\, \Omega,\\ w_n>0 & \quad \mathrm{in} \,\, \Omega, \\ w_n=0 & \quad \mathrm{in} \,\, \big(\mathbb{R}^N \setminus \Omega \big). 
\end{cases}
\end{equation}
We apply the Schauder's fixed-point Theorem (see for example \cite[Theorem 3.2.20]{MR3890060}). We need to prove that $\Phi$ is continuous, compact and there exists a bounded convex subset of $ \mathcal{C} \subset L^2(\Omega) $ which is invariant under $\Phi$. 

For continuity let $v_k \to v$ in $L^2(\Omega)$. It is obvious that for each $n$:
\begin{equation*} 
\Bigg\| \Big( \frac{\mu_n}{(|v_k|+\frac{1}{n})^{\gamma}} + f_n\Big)-\Big( \frac{\mu_n}{(|v|+\frac{1}{n})^{\gamma}} + f_n \Big) \Bigg\|_{L^2(\Omega)} \to 0, \qquad k \to \infty.
\end{equation*}
Now, from the uniqueness of the weak solution to \eqref{Eq4}, we conclude $\Phi(v_k) \to \Phi(v)$. 

For compactness, we argue as follows. For $v \in \mathcal{C}$, let $w$ be the solution to \eqref{Eq5}. If $\lambda_1^s(\Omega)$ is the first eigenvalue of $(-\Delta)^s$ in $X_0^s(\Omega)$, \cite[Proposition 9]{MR3002745}, then we have
\begin{equation}
\label{Eq7}
\begin{aligned}
\lambda_1^s(\Omega) \int_{\Omega} w^2 \, dx & \leq \int_{\mathbb{R}^N} | (-\Delta)^{\frac{s}{2}} w|^2 \, dx  \\
& \leq \frac{\Lambda_{N,s}}{\Lambda_{N,s}-\lambda} \, \Bigg( \int_{\mathbb{R}^N} | (-\Delta)^{\frac{s}{2}} w|^2 - \lambda \frac{w^2}{|x|^{2s}} \, dx \Bigg),
\end{aligned}
\end{equation}
where in the last inequality we have used the Hardy inequality. Testing \eqref{Eq5} with $\phi=w$, we have
\begin{equation}
\label{Eq8}
\int_{\mathbb{R}^N} | (-\Delta)^{\frac{s}{2}} w|^2 \, dx - \lambda \int_{\mathbb{R}^N} \frac{w^2}{|x|^{2s}} \, dx = \int_{\Omega} \dfrac{\mu_n}{(|v|+\frac{1}{n})^{\gamma}}  w \, dx + \int_{\Omega} f_n w  \, dx.
\end{equation}
For the first term on the right-hand side of the above equality we have the following estimate:
\begin{align}
\int_{\Omega} \dfrac{\mu_n}{(|v|+\frac{1}{n})^{\gamma}} w \, dx & \leq n^{\gamma} \int_{\Omega} \mu_n w \, dx \leq C_1 \Big( \int_{\Omega} |w|^2 \, dx \Big)^{\frac{1}{2}},  \label{Eq9}
\end{align}
where in the last inequality we have used the H\"older inequality. Once more using H\"older inequality gives $ \int_{\Omega} f_n w \, dx \leq C_2 \Big( \int_{\Omega} |w|^2 \, dx \Big)^{\frac{1}{2}}$ for some $C_2>0$. Thus combining this inequality with \eqref{Eq7}, \eqref{Eq8}, and \eqref{Eq9} we obtain
\begin{equation*} 
\lambda_1^s(\Omega) \int_{\Omega} |w|^2 \, dx \leq C_3 \Big( \int_{\Omega} |w|^2 \, dx \Big)^{\frac{1}{2}},
\end{equation*}
which implies that $\Phi(L^2(\Omega))$ is contained in a ball of finite radius in $L^2(\Omega)$. Therefore the intersection of this ball with $\mathcal{C}$ in invariant under $\Phi$. Moreover, we have $ \int_{\mathbb{R}^N} | (-\Delta)^{\frac{s}{2}} \Phi(v)|^2 \, dx=\int_{\mathbb{R}^N} | (-\Delta)^{\frac{s}{2}} w|^2 \, dx \leq C_4$, which means that $\Phi(L^2(\Omega))$ is relatively compact in $L^2(\Omega)$ by the compactness of the embedding \eqref{Eq3.350}.

\begin{proposition}
\label{Pro1.5}
For every $K \Subset \Omega$, there exists $C_K>0$ such that $\{w_n\}$, the solution to \eqref{Eq6}, satisfies $w_n(x) \geq C_K$ a.e. in $K$, for each $n$.
\end{proposition}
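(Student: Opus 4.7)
The plan combines two observations: the approximating sequence $\{w_n\}$ is monotone non-decreasing in $n$, and $w_1$ already enjoys an $n$-independent pointwise lower bound of Hopf type. These two facts together give the uniform positivity on compact subsets.

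For the monotonicity, I would subtract the equations \eqref{Eq6} for $w_{n+1}$ and $w_n$ and test the resulting identity with $(w_n - w_{n+1})_+ \in X_0^s(\Omega)$. On the positivity set $\{w_n > w_{n+1}\}$, the facts $\mu_n \leq \mu_{n+1}$, $f_n \leq f_{n+1}$, and $w_n + \frac{1}{n} > w_{n+1} + \frac{1}{n+1}$ (which uses $\frac{1}{n} > \frac{1}{n+1}$) imply
$$\frac{\mu_n}{(w_n+\frac{1}{n})^{\gamma}} - \frac{\mu_{n+1}}{(w_{n+1}+\frac{1}{n+1})^{\gamma}} + f_n - f_{n+1} \leq 0.$$
Invoking the pointwise Kato-type inequality $(a-b)(a_+ - b_+) \geq (a_+ - b_+)^2$ inside the Gagliardo double integral, one obtains
$$\int_{\mathbb{R}^N} \big|(-\Delta)^{s/2}(w_n - w_{n+1})_+\big|^2 \, dx \leq \lambda \int_{\Omega} \frac{(w_n - w_{n+1})_+^2}{|x|^{2s}} \, dx.$$
The fractional Hardy inequality \eqref{Eq3.05}, together with the strict bound $\lambda < \Lambda_{N,s}$, then forces $(w_n - w_{n+1})_+ \equiv 0$, i.e.\ $w_n \leq w_{n+1}$.

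For the lower bound on $w_1$, the right-hand side of \eqref{Eq6} at $n=1$ is non-negative and, under the standing nontriviality of the data, not identically zero. Viewing $w_1$ as a non-trivial weak solution of the (non-singular) problem $(-\Delta)^s u = h$ with $h \geq 0$, $h \not\equiv 0$, the Hopf-type estimate for the fractional Laplacian (as already invoked in Remark \ref{RemarkADD01}) yields a constant $c_1 > 0$ such that $w_1(x) \geq c_1 \, \delta^s(x)$ a.e.\ in $\Omega$. Combining with the monotonicity, for any $K \Subset \Omega$ and $d_K := \mathrm{dist}(K, \partial\Omega) > 0$, I conclude
$$w_n(x) \geq w_1(x) \geq c_1 \, d_K^s =: C_K > 0 \qquad \text{a.e.\ in } K,$$
uniformly in $n$.

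The main obstacle lies in the monotonicity step: the Hardy potential produces, after subtraction, an unsigned term $\lambda (w_n - w_{n+1})/|x|^{2s}$ that obstructs a naive application of the comparison principle. The resolution is an absorption trick — the Kato-type pointwise inequality generates the full Gagliardo seminorm on the left, and the \emph{strict} inequality $\lambda < \Lambda_{N,s}$ in Hardy absorbs the Hardy contribution on $\{w_n > w_{n+1}\}$ and closes the estimate.
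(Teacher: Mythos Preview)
Your argument is correct, but it follows a different route from the paper's. The paper does \emph{not} prove that $\{w_n\}$ itself is monotone. Instead it introduces an auxiliary sequence $\{v_n\}$ solving the Hardy-free problem
\[
(-\Delta)^s v_n = \dfrac{\mu_n}{(v_n+\tfrac{1}{n})^{\gamma}} \quad \text{in } \Omega, \qquad v_n=0 \ \text{in } \mathbb{R}^N\setminus\Omega,
\]
shows $v_n\le v_{n+1}$ (no Hardy term to absorb), and then compares $w_n$ with $v_n$ by testing the difference of \eqref{Eq6} and the auxiliary equation with $(w_n-v_n)^-$. In that comparison the Hardy contribution $\lambda\,|x|^{-2s}w_n\,(w_n-v_n)^-$ is automatically non-negative, so no appeal to the strict bound $\lambda<\Lambda_{N,s}$ is needed; one concludes $w_n\ge v_n\ge v_1\ge C_K$ on $K$. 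Your approach is more direct---you avoid the auxiliary problem entirely---but the price is that the monotonicity of $\{w_n\}$ genuinely requires the Hardy absorption and hence the strict inequality $\lambda<\Lambda_{N,s}$ (which is available here, so no harm done). The paper's detour buys a comparison that is insensitive to the size of $\lambda$ relative to $\Lambda_{N,s}$, which is why the same mechanism can be recycled in the borderline case $\lambda=\Lambda_{N,s}$ discussed later. Both proofs finish the same way, via a Hopf-type bound $w_1\ge c\,\delta^s$ (respectively $v_1\ge C_K$) on compact subsets.
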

\begin{proof}
Let us consider the following problem:
\begin{equation}
\label{Eq9.1}
\begin{cases}
(-\Delta )^s v_n = \dfrac{\mu_n}{(v_n+\frac{1}{n})^{\gamma}} & \quad \mathrm{in} \,\, \Omega,\\ v_n>0 & \quad \mathrm{in} \,\, \Omega, \\ v_n=0 & \quad \mathrm{in} \,\, \big(\mathbb{R}^N \setminus \Omega \big). 
\end{cases}
\end{equation}
Existence of the weak solution $v_n$ follows from a similar proof to problem \eqref{Eq6}. In the same way of \cite[Lemma 3.2]{MR3356049} we can show that $v_n \leq v_{n+1}$ a.e. in $\Omega$. Also for each $K \Subset \Omega$, there exists $C_K>0$ such that $v_1(x) \geq C_K$ a.e. in $K$. Now subtracting the weak formulation of \eqref{Eq9.1} from the weak formulation of \eqref{Eq6} and using $(w_n-v_n)^-$ as a test function (see \cite[Theorem 20]{MR3393266}) we conclude that $w_n \geq v_n$ a.e. in $\Omega$. Therefore, for every $K \Subset \Omega$, there exists $C_K$ such that 
$w_n \geq v_n \geq v_1 \geq C_k>0$ a.e. in $K$.
\end{proof}

\begin{proposition}
\label{Pro2}
Assume $\gamma \geq 1$. Also let $\{w_n\}_{n=1}^{\infty}$ be the sequence of solutions to \eqref{Eq6}. Then $\{T_k^{\frac{\gamma+1}{2}}(w_n)\}_{n=1}^{\infty}$ and $\{G_k(w_n)\}_{n=1}^{\infty}$ are bounded in $X_0^s(\Omega)$, and $\{T_k(w_n)\}_{n=1}^{\infty}$ is bounded in $X_{\mathrm{loc}}^s(\Omega)$.
\end{proposition}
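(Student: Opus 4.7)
The plan is to obtain all three estimates by testing the weak formulation of \eqref{Eq6} with three carefully chosen admissible functions, using pointwise algebraic inequalities on the bilinear form and exploiting the assumption $\gamma\geq1$ to tame the singular term. The order matters: first I would bound $\{G_k(w_n)\}$, then use that bound to control $\{T_k^{(\gamma+1)/2}(w_n)\}$, and finally deduce the local bound on $\{T_k(w_n)\}$ from the previous two via the positivity provided by Proposition~\ref{Pro1.5}.

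For the $G_k(w_n)$ estimate, I would test \eqref{Eq6} with $G_k(w_n)\in X_0^s(\Omega)$ (which is admissible since $G_k$ is Lipschitz) and use the pointwise inequality $(a-b)(G_k(a)-G_k(b))\geq(G_k(a)-G_k(b))^2$ on the double integral to get, up to the constant $C_{N,s}/2$, a lower bound $\|G_k(w_n)\|_{X_0^s(\Omega)}^2$. Writing $w_nG_k(w_n)=G_k(w_n)^2+kG_k(w_n)$, the fractional Hardy inequality \eqref{Eq3.05} together with $\lambda<\Lambda_{N,s}$ lets me absorb $\lambda\int G_k^2(w_n)/|x|^{2s}$ into the left-hand side; the cross term $\lambda k\int G_k(w_n)/|x|^{2s}$ is handled with Cauchy–Schwarz (using $\int_\Omega|x|^{-2s}dx<\infty$ since $N>2s$) and a Young inequality with small parameter. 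For the singular term I would use the pointwise chain $\frac{G_k(w_n)}{(w_n+1/n)^\gamma}\leq\frac{w_n}{w_n^\gamma}=w_n^{1-\gamma}\leq k^{1-\gamma}$ on $\{w_n>k\}$, which is exactly the step that demands $\gamma\geq1$; this gives an $n$-uniform control by $k^{1-\gamma}\|\mu\|_{L^1(\Omega)}$. The $f_n$ term is absorbed using $f\in X^{-s}(\Omega)$ and Young's inequality.

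For the $T_k^{(\gamma+1)/2}(w_n)$ estimate, I would take $T_k^\gamma(w_n)\in X_0^s(\Omega)\cap L^\infty(\Omega)$ as a test function and invoke the key algebraic inequality
\begin{equation*}
(a-b)\bigl(T_k^\gamma(a)-T_k^\gamma(b)\bigr)\geq\frac{4\gamma}{(\gamma+1)^2}\bigl(T_k^{(\gamma+1)/2}(a)-T_k^{(\gamma+1)/2}(b)\bigr)^2,\qquad a,b\geq0,
\end{equation*}
checked by splitting into the three cases $a,b\leq k$, $a,b\geq k$, and $a<k\leq b$ (the standard Díaz–Saá inequality handles the first, the others are immediate). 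Splitting $w_nT_k^\gamma(w_n)=(T_k^{(\gamma+1)/2}(w_n))^2+k^\gamma G_k(w_n)$, I would control the first piece crudely by $k^{\gamma+1}\int_\Omega|x|^{-2s}dx$ — deliberately \emph{not} using Hardy, so as to avoid invoking the condition $\tfrac{4\gamma}{(\gamma+1)^2}>\tfrac{\lambda}{\Lambda_{N,s}}$ that is only assumed in Theorem~\ref{Thm1}(2) — and the second piece by Cauchy–Schwarz plus the just-obtained bound on $\|G_k(w_n)\|_{X_0^s}$. The singular term is bounded pointwise by $\mu_n\leq\mu$ because $T_k^\gamma(w_n)\leq(w_n+1/n)^\gamma$, and the $f_n$ term by $k^\gamma\|f\|_{L^1(\Omega)}$. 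Collecting everything yields a $k$-dependent but $n$-uniform bound on $\|T_k^{(\gamma+1)/2}(w_n)\|_{X_0^s(\Omega)}$.

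For the $X_{\mathrm{loc}}^s$ bound on $T_k(w_n)$, when $\gamma=1$ there is nothing to do since $T_k^{(\gamma+1)/2}=T_k$. For $\gamma>1$, given any $\phi\in C_c^\infty(\Omega)$ and an open $U\Supset\mathrm{supp}(\phi)$ with $\overline{U}\subset\Omega$, Proposition~\ref{Pro1.5} furnishes $c_U>0$ with $w_n\geq c_U$ in $U$; on the range $[m^{(\gamma+1)/2},k^{(\gamma+1)/2}]$ with $m=\min(c_U,k)$, the map $F(t)=t^{2/(\gamma+1)}$ is Lipschitz. Extending $F$ to a globally Lipschitz $\widetilde F:\mathbb{R}\to\mathbb{R}$ with $\widetilde F(0)=0$, the composition $\widetilde F(T_k^{(\gamma+1)/2}(w_n))$ is uniformly bounded in $X_0^s(\Omega)$ (Lipschitz composition preserves the fractional Gagliardo seminorm) and coincides with $T_k(w_n)$ on $U$; multiplying by $\phi$ proves the claim. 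The \emph{main obstacle} in this scheme is Step~2: circumventing the condition $\tfrac{4\gamma}{(\gamma+1)^2}>\tfrac{\lambda}{\Lambda_{N,s}}$ requires abandoning Hardy for the $(T_k^{(\gamma+1)/2}(w_n))^2/|x|^{2s}$ term and accepting a bound that blows up in $k$ but remains uniform in $n$, which is exactly the statement we need.
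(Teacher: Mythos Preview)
Your proposal is correct and follows essentially the same approach as the paper: the same two test functions $G_k(w_n)$ and $T_k^\gamma(w_n)$, the same algebraic inequality $(a-b)(T_k^\gamma(a)-T_k^\gamma(b))\ge\frac{4\gamma}{(\gamma+1)^2}(T_k^{(\gamma+1)/2}(a)-T_k^{(\gamma+1)/2}(b))^2$, the same $\gamma\ge1$ trick to bound the singular term by $k^{1-\gamma}\|\mu\|_{L^1}$, and the same use of Proposition~\ref{Pro1.5} for the local bound on $T_k(w_n)$. The only differences are cosmetic: you present the $G_k$ step before the $T_k^\gamma$ step (the paper intertwines them), you use the exact identity $w_nT_k^\gamma(w_n)=(T_k^{(\gamma+1)/2}(w_n))^2+k^\gamma G_k(w_n)$ where the paper uses the cruder chain $w_nT_k^\gamma(w_n)\le k^{\gamma-1}w_n^2\le 2k^{\gamma-1}(T_k^2(w_n)+G_k^2(w_n))$, and you phrase the local step as Lipschitz composition where the paper writes the equivalent pointwise inequality $(\alpha-\beta)^2\le(\alpha^{(\gamma+1)/2}-\beta^{(\gamma+1)/2})^2$ for $\alpha,\beta\ge1$.
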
 
\begin{proof}
We will follow the proof of \cite[Theorem 5.2]{MR3479207}. Let $\gamma \geq 1$. Taking $\phi=T_k^{\gamma}(w_n)$ as a test function in \eqref{Eq6} we obtain
\begin{equation}
\label{Eq10}
\begin{aligned}
\int_{\mathbb{R}^N} & (-\Delta )^{\frac{s}{2}} w_n (-\Delta )^{\frac{s}{2}} T_k^{\gamma}(w_n) \, dx = \lambda \int_{\Omega} \dfrac{w_nT_k^{\gamma}(w_n)}{|x|^{2s}} \, dx \\
& \quad + \int_{\Omega} \dfrac{\mu_n}{(w_n+\frac{1}{n})^{\gamma}} T_k^{\gamma}(w_n) \, dx + \int_{\Omega}T_k^{\gamma}(w_n) f_n \, dx.
\end{aligned}
\end{equation}
For the left-hand side, by using \eqref{Eq3.349n} and the following elementary inequality
\begin{equation}
\label{EqElementary}
(s_1-s_2)(s_1^{\gamma}-s_2^{\gamma}) \geq \frac{4\gamma}{(\gamma+1)^2} \Big(s_1^{\frac{\gamma+1}{2}}-s_2^{\frac{\gamma+1}{2}} \Big)^2, \qquad \forall s_1,s_2 \geq 0, \,\, \gamma>0,
\end{equation}
we get
\begin{align} \nonumber 
\int_{\mathbb{R}^N} & (-\Delta )^{\frac{s}{2}} w_n (-\Delta )^{\frac{s}{2}} T_k^{\gamma}(w_n) \, dx \\  \nonumber 
& =  \frac{C_{N,s}}{2} \iint_{D_{\Omega}} \frac{(w_n(x)-w_n(y))(T_k^{\gamma}(w_n)(x)-T_k^{\gamma}(w_n)(y))}{|x-y|^{N+2s}} \, dxdy \\\nonumber 
& \geq \frac{2 \gamma C_{N,s}}{(\gamma+1)^2} \iint_{D_{\Omega}} \frac{|T_k^{\frac{\gamma+1}{2}}w_n(x)-T_k^{\frac{\gamma+1}{2}}(w_n)(y)|^2}{|x-y|^{N+2s}} \, dxdy \\
& \geq C_0 \int_{\mathbb{R}^N} |(-\Delta )^{\frac{s}{2}} T_k^{\frac{\gamma+1}{2}}(w_n)|^2 \, dx. \label{Eq10.0001}
\end{align}
For the first term on the right-hand side we have
\begin{equation}
\label{Eq10.1}
\int_{\Omega} \dfrac{w_n T_k^{\gamma}(w_n)}{|x|^{2s}} \, dx \leq k^{\gamma-1} \int_{\Omega} \dfrac{w_n^2}{|x|^{2s}} \, dx. 
\end{equation}
For the second term on the right-hand side of \eqref{Eq10}, note that 
$\frac{T_k^{\gamma}(w_n)}{(w_n+\frac{1}{n})^{\gamma}} \leq \frac{w_n^{\gamma}}{(w_n+\frac{1}{n})^{\gamma}} \leq 1$.
Now we deduce
\begin{align}
\int_{\Omega} \dfrac{\mu_n}{(w_n+\frac{1}{n})^{\gamma}} T_k^{\gamma}(w_n) \, dx & \leq \int_{\Omega} \mu_n \, dx \leq \|\mu_n\|_{L^1} \leq C_1. \label{Eq13}
\end{align}
Also for the last term:
\begin{align}
\nonumber 
\int_{\Omega}T_k^{\gamma}(w_n) f_n \, dx & \leq k^{\frac{\gamma-1}{2}} \int_{\Omega}T_k^{\frac{\gamma+1}{2}}(w_n) f_n \, dx  \\ \nonumber 
& = k^{\frac{\gamma-1}{2}} \big\langle  f_n, T_k^{\frac{\gamma+1}{2}}(w_n) \big\rangle_{X^{-s}(\Omega),X_0^s(\Omega)} \\ \nonumber 
& \leq k^{\frac{\gamma-1}{2}} \big\langle f, T_k^{\frac{\gamma+1}{2}}(w_n) \big\rangle_{X^{-s}(\Omega),X_0^s(\Omega)} \\  
& \leq k^{\frac{\gamma-1}{2}} \|f\|_{X^{-s}(\Omega)} \|T_k^{\frac{\gamma+1}{2}}(w_n)\|_{X_0^{s}(\Omega)} =k^{\frac{\gamma-1}{2}} C_2 \|T_k^{\frac{\gamma+1}{2}}(w_n)\|_{X_0^{s}(\Omega)}. \label{Eq14}
\end{align}
Thus from \eqref{Eq10}, \eqref{Eq10.0001}, \eqref{Eq10.1}, \eqref{Eq13} and \eqref{Eq14} we obtain
\begin{equation}
\label{Eq15}
\begin{aligned}
\int_{\mathbb{R}^N} |(-\Delta )^{\frac{s}{2}} T_k^{\frac{\gamma+1}{2}}(w_n)|^2 \, dx & \leq \frac{\lambda k^{\gamma-1}}{C_0} \int_{\mathbb{R}^N}   \dfrac{w_n^2}{|x|^{2s}} \, dx  \\
& \quad + C_1+ C_2 k^{\frac{\gamma-1}{2}} \|T_k^{\frac{\gamma+1}{2}}(w_n)\|_{X_0^{s}(\Omega)}.
\end{aligned}
\end{equation}
If we show that the term
\begin{equation}
\label{Eq15.1}
\int_{\Omega} \frac{w_n^2}{|x|^{2s}} \, dx
\end{equation}
is uniformly bounded in $n$, then \eqref{Eq15} gives $ \| T_k^{\frac{\gamma+1}{2}}(w_n)\|_{X_0^s(\Omega)}^2 \leq C_3(k)(1+ \|T_k^{\frac{\gamma+1}{2}}(w_n)\|_{X_0^{s}(\Omega)}) $, which implies the boundedness of $\{T_k^{\frac{\gamma+1}{2}}(w_n)\}$ in $X_0^s(\Omega)$.

For proving the boundedness of \eqref{Eq15.1}, it is enough to consider $\phi=G_k(w_n)$ as a test function in \eqref{Eq6} as follows, where $G_k(\sigma):=\sigma - T_k(\sigma)$.
\begin{equation}
\label{Eq15.2}
\begin{aligned}
\int_{\mathbb{R}^N} |(-\Delta )^{\frac{s}{2}} G_k(w_n)|^2 \, dx \leq \lambda  & \int_{\mathbb{R}^N} \dfrac{w_n G_k(w_n)}{|x|^{2s}} \, dx \\
& + \int_{\Omega} \dfrac{\mu_n}{(w_n+\frac{1}{n})^{\gamma}} G_k(w_n) \, dx + \int_{\Omega} f_n G_k(w_n) \, dx.
\end{aligned}
\end{equation}
Note that for the left-hand side we have used \cite[Proposition 3]{MR3393266}. In order to estimate the terms on the right-hand side of this equality for uniformly in $n$, we have the following.

For the second term on the right-hand side of \eqref{Eq15.2} we have the following estimate uniformly in $n$:
\begin{equation*} 
\int_{\Omega} \dfrac{\mu_n}{(w_n+\frac{1}{n})^{\gamma}} G_k(w_n) \, dx \leq \frac{1}{k^{\gamma-1}} \int_{\Omega} \mu_n \leq C. 
\end{equation*}
For $\int_{\Omega}f_n G_k(w_n)  \, dx$, we have the following estimate: 
\begin{equation*}
\label{Eq15.21}
\begin{aligned}
\int_{\Omega} f_n G_k(w_n)  \, dx = \big\langle  f_n, G_k(w_n) \big\rangle_{X^{-s}(\Omega),X_0^s(\Omega)} & \leq \big\langle  f, G_k(w_n) \big\rangle_{X^{-s}(\Omega),X_0^s(\Omega)} \\
& \leq C_1 \|G_k(w_n)\|_{X_0^s(\Omega)}. 
\end{aligned}
\end{equation*}
For the first term on the right-hand side of \eqref{Eq15.2} we can write:
\begin{equation}
\label{Eq15.5}
\int_{\mathbb{R}^N} \dfrac{w_nG_k(w_n)}{|x|^{2s}} \, dx = \int_{\mathbb{R}^N} \dfrac{|G_k(w_n)|^2}{|x|^{2s}} \, dx + k \int_{\mathbb{R}^N} \frac{G_k(w_n)}{|x|^{2s}} \, dx.
\end{equation}
For the last term in \eqref{Eq15.5}, by using the H\"older inequality with exponents $a=2_s^*$ and $b=\frac{2N}{N+2s}<\frac{N}{2s}$, noting that the integration can be over $\Omega$, because of $w_n \equiv 0$ in $\mathbb{R}^N \setminus \Omega$, and the embedding \eqref{Eq3.350} we obtain

\begin{equation*}
\label{Eq15.5addeda}
\begin{aligned}
 \int_{\mathbb{R}^N} \frac{G_k(w_n)}{|x|^{2s}} \, dx = \int_{\Omega} \frac{G_k(w_n)}{|x|^{2s}} \, dx & \leq \Bigg( \int_{\Omega} \frac{1}{|x|^{2sb}} \, dx \Bigg)^{\frac{1}{b}} \Bigg( \int_{\mathbb{R}^N} |G_k(w_n)|^a \, dx \Bigg)^{\frac{1}{a}} \\
&  \leq C_2 \| G_k(w_n)\|_{X_0^s(\Omega)}. 
\end{aligned}
\end{equation*}
Combining the above estimates, from \eqref{Eq15.2} we get
\begin{equation*} 
\begin{aligned}
\int_{\mathbb{R}^N} |(-\Delta )^{\frac{s}{2}} G_k(w_n)|^2 \, dx - \lambda \int_{\mathbb{R}^N} \dfrac{|G_k(w_n)|^2}{|x|^{2s}} \, dx & \leq kC_2 \| G_k(w_n)\|_{X_0^s(\Omega)} \\
& \quad + C+C_1 \| G_k(w_n)\|_{X_0^s(\Omega)}.
\end{aligned}
\end{equation*}
Now Hardy inequality shows the boundedness of the term, $ \int_{\mathbb{R}^N} \dfrac{|G_k(w_n)|^2}{|x|^{2s}} \, dx $, and therefore we obtain the boundedness of \eqref{Eq15.1} by using the fact that $w_n^2 \leq 2 ( T_k^2(w_n)+G_k^2(w_n))$, i.e.
\begin{equation*} 
\begin{aligned}
\int_{\Omega} \frac{w_n^2}{|x|^{2s}} \, dx & \leq 2 \int_{\Omega} \frac{|T_k(w_n)|^2}{|x|^{2s}} \, dx + 2 \int_{\Omega} \frac{|G_k(w_n)|^2}{|x|^{2s}} \, dx \\
& \leq 2k^2 \int_{\Omega} \frac{1}{|x|^{2s}} \, dx + 2 \int_{\Omega} \frac{|G_k(w_n)|^2}{|x|^{2s}} \, dx.
\end{aligned}
\end{equation*}
Moreover, we get the boundedness of $\| G_k(w_n)\|_{X_0^s(\Omega)}$ uniformly in $n$. 

Now we show that $\{T_k(w_n)\}$ is bounded in $X_{\mathrm{loc}}^s(\Omega)$. For this purpose first note that by Proposition \ref{Pro1.5}, for any compact set $K \Subset \Omega$, there exists $C(K)>0$ such that
\begin{equation*}  
w_n(x) \geq w_1(x) \geq C(K) >0, \qquad \text{a.e. in} \,\, K.
\end{equation*}
Therefore
\begin{equation*} 
T_k(w_n) \geq T_k(w_1) \geq \tilde{C}:=\min\{k, C(K) \}.
\end{equation*}
For $(x,y) \in K \times K$, define $\alpha_n:= \dfrac{T_k(w_n)(x)}{\tilde{C}}$, and $\beta_n:= \dfrac{T_k(w_n)(y)}{\tilde{C}}$.
Since $\alpha_n, \beta_n \geq 1$, we have the following estimate by applying an elementary inequality
\begin{equation*} 
(\alpha_n - \beta_n)^2 \leq \Big(\alpha_n^{\frac{\gamma+1}{2}}-\beta_n^{\frac{\gamma+1}{2}} \Big)^2. 
\end{equation*}
Now by the definition of $\alpha_n$ and $\beta_n$, we obtain
\begin{equation*} 
\Big(T_k(w_n(x)) - T_k(w_n(y)) \Big)^2 \leq \tilde{C}^{1-\gamma} \Big(T_k^{\frac{\gamma+1}{2}}w_n(x)-T_k^{\frac{\gamma+1}{2}}w_n(y) \Big)^2.
\end{equation*}
Thus we get the boundedness of $\{T_k(w_n)\}_{n=1}^{\infty}$ in $X_{\mathrm{loc}}^s(\Omega)$ by using \eqref{EqEqEqOO.} and the boundedness of $\{T_k^{\frac{\gamma+1}{2}}(w_n)\}_{n=1}^{\infty}$ in $X_0^s(\Omega)$.
\end{proof}

\begin{remark}
In the case $\gamma=1$, since both $\{G_k(w_n)\}_{n=1}^{\infty}$ and $\{T_k(w_n)\}_{n=1}^{\infty}$ are bounded in $X_0^s(\Omega)$, therefore $\{w_n\}_{n=1}^{\infty}$ is bounded in $X_0^s(\Omega)$.
\end{remark}

\begin{remark}
For the case $0<\gamma < 1$, if furthermore we assume $\mu \in L^{\big(\frac{2_s^*}{1-\gamma}\big)'}(\Omega)$, then the sequence $\{w_n\}_{n=1}^{\infty}$ is bounded in $X_0^s(\Omega)$. Indeed, you just have to keep in mind that
\begin{equation*} 
\int_{\Omega} \dfrac{\mu_n}{(w_n+\frac{1}{n})^{\gamma}} w_n \, dx \leq \int_{\Omega} \mu_n w_n^{1-\gamma} \, dx \leq \|\mu\|_{L^{\big(\frac{2_s^*}{1-\gamma}\big)'}(\Omega) } \| w_n \|_{L^{2_s^*}(\Omega)}^{1-\gamma} \leq C \|w_n\|_{X_0^s(\Omega)}^{1-\gamma}.
\end{equation*}
Since the rest of the proof can be obtained proceeding as in the case $\gamma=1$, for the sake of brevity it is left to the reader.
\end{remark}

Now we are ready to proof Theorem \ref{Thm1}.
\begin{proof}[Proof of Theorem \ref{Thm1}]
There exists $ u \in X_{\mathrm{loc}}^{s}(\Omega)$ ($u \in X_0^s(\Omega)$ in the case $\gamma \leq 1$) such that up to a subsequence 
\begin{itemize}
\item
$ w_n \to u$ weakly in $X_{\mathrm{loc}}^{s}(\Omega)$ (weakly in $X_0^s(\Omega)$ in the case $\gamma \leq 1$).
\item
$ G_k(w_n) \to G_k(u)$ weakly in $X_0^{s}(\Omega)$.
\item
$T_k^{\frac{\gamma+1}{2}}(w_n) \to T_k^{\frac{\gamma+1}{2}}(u) $ weakly in $X_0^{s}(\Omega)$.
\end{itemize}
Also, by using the embedding \eqref{Eq3.350}, up to a subsequence we may have
\begin{itemize}
\item
$ w_n \to u $ in $L^r(\Omega)$, for any $r \in [1,2_{s}^*)$.
\item
$w_n(x) \to u(x)$ pointwise a.e. in $\Omega$.
\end{itemize} 
Now for every fixed $\phi \in \mathcal{T}(\Omega)$, by the estimates above, we could pass to the limit and obtain
\begin{equation*} 
\begin{aligned}
& \int_{\Omega} \frac{w_n \phi}{|x|^{2s}} \, dx \to \int_{\Omega} \frac{u \phi}{|x|^{2s}} \, dx < + \infty \\
& \int_{\Omega} \dfrac{\mu_n}{(w_n+\frac{1}{n})^{\gamma}} \phi \, dx \to \int_{\Omega} \dfrac{\mu \phi}{u^{\gamma}} \, dx< + \infty \\
& \int_{\Omega} f_n \phi  \, dx \to \int_{\Omega} f \phi \, dx.
\end{aligned}
\end{equation*}
Also, for every $\phi \in \mathcal{T}(\Omega)$, we have
\begin{equation*}  
\lim_{n \to \infty} \int_{\mathbb{R}^N} (-\Delta )^{\frac{s}{2}} w_n (-\Delta )^{\frac{s}{2}} \phi \, dx =  \lim_{n \to \infty} \int_{\mathbb{R}^N} w_n (-\Delta )^{s} \phi \, dx = \int_{\mathbb{R}^N} u (-\Delta)^{s} \phi \, dx.
\end{equation*}
Since for every $K \Subset \Omega$, there exists $C_K>0$ such that $w_n(x) \geq C_K$ a.e. in $K$ and also $w_n \equiv 0$ in $\big(\mathbb{R}^N \setminus \Omega \big)$ and because of $w_n(x) \to u(x)$ a.e. in $\Omega$, thus $u$ is a weak solution to problem \eqref{Eq1}.

Finally note that if we take $\gamma$ such that $\frac{4\gamma}{(\gamma+1)^2}>\frac{\lambda}{\Lambda_{N,s}}$, then by testing \eqref{Eq6} with $w_n^{\gamma}$, and using the inequality \eqref{EqElementary} together with Hardy inequality, it easily follows that $u^{\frac{\gamma+1}{2}} \in X_0^{s}(\Omega)$.
\end{proof}

By now, in Theorem \ref{Thm2} we have shown that for $\lambda> \Lambda_{N,s}$ there is no positive solution to problem \eqref{Eq1}. Also, in Theorem \ref{Thm1} we have proved the existence of a positive solution for $\lambda< \Lambda_{N,s}$. The following remark for $\lambda=\Lambda_{N,s}$ may be interesting.

\begin{remark}
In the borderline case $\lambda=\Lambda_{N,s}$, by invoking the improved version of Hardy inequality, \cite{MR3186917}, one can define the space $H(\Omega)$ as the completion of $C_0^{\infty}(\Omega)$ with respect to the norm:
\begin{equation*} 
\|\phi\|_{H(\Omega)} := \Bigg( \int_{\mathbb{R}^N} |(-\Delta)^{\frac{s}{2}} \phi|^2 \, dx - \Lambda_{N,s} \int_{\Omega} \frac{\phi^2}{|x|^{2s}} \, dx \Bigg)^{\frac{1}{2}}.
\end{equation*}
It can be proved that $X_0^s(\Omega) \subsetneq H(\Omega) \subsetneq X_0^{s,q}(\Omega)$, for all $q <2$. By invoking the classical variational methods in the space $H(\Omega)$ and the same techniques used above, a similar existence result can be obtained in this new function space. See \cite[Remark 1]{MR3186917} and also \cite{MR3231059} for the details.
\end{remark}

Now, in the spirit of \cite[Theorem 4.10]{MR3479207} we prove Theorem \ref{Thm1.5} which gives a necessary and sufficient condition for the existence of a solution to \eqref{Eq1}. 
\begin{proof}[Proof of Theorem \ref{Thm1.5}]
Let consider $u$ as a weak solution to problem \eqref{Eq1} and $\phi_n \in \mathcal{T}(\Omega)$ as the weak energy solutions to the following problems:
\begin{equation*} 
\begin{cases}
(-\Delta )^s \phi_n = \lambda \dfrac{\phi_{n-1}}{|x|^{2s}+\frac{1}{n}}+1 & \quad \mathrm{in} \,\, \Omega,\\ \phi_n>0 & \quad \mathrm{in} \,\, \Omega, \\ \phi_n=0 & \quad \mathrm{in} \,\, \big(\mathbb{R}^N \setminus \Omega \big),
\end{cases}
\end{equation*}
where the iteration starts with
\begin{equation*} 
\begin{cases}
(-\Delta )^s \phi_0 = 1 & \quad \mathrm{in} \,\, \Omega,\\ \phi_0>0 & \quad \mathrm{in} \,\, \Omega, \\ \phi_0=0 & \quad \mathrm{in} \,\, \big(\mathbb{R}^N \setminus \Omega \big).
\end{cases}
\end{equation*}
The Comparison Principle for fractional Laplacian operator implies that $\phi_0 \leq \phi_1 \leq \cdots \leq \phi_{n-1} \leq \phi_n \leq \phi$, where $\phi:=\lim_{n \to \infty} \phi_n$ is the weak energy solution to 
\begin{equation}
\label{EqNS0}
\begin{cases}
(-\Delta )^s \phi = \lambda \dfrac{\phi}{|x|^{2s}}+1 & \quad \mathrm{in} \,\, \Omega,\\ \phi>0 & \quad \mathrm{in} \,\, \Omega, \\ \phi=0 & \quad \mathrm{in} \,\, \big(\mathbb{R}^N \setminus \Omega \big).
\end{cases}
\end{equation}
Using $\phi_n$ as a test function in \eqref{Eq1} implies that 
\begin{equation}
\label{EqNS1}
\int_{\mathbb{R}^N}  u (-\Delta)^{s} \phi_n \, dx = \lambda \int_{\Omega} \frac{u \phi_n}{|x|^{2s}} \, dx + \int_{\Omega} \frac{\mu\phi_n }{u^\gamma} \, dx  + \int_{\Omega} f \phi_n \,dx.
\end{equation}
On the other hand, by the definition $\phi_n$, we have 
\begin{equation}
\label{EqNS2}
\int_{\mathbb{R}^N}  u (-\Delta)^{s} \phi_n \, dx = \lambda \int_{\Omega} \frac{u \phi_{n-1}}{|x|^{2s}+\frac{1}{n}} \, dx + \int_{\Omega} u \,dx.
\end{equation}
Combining \eqref{EqNS1} and \eqref{EqNS2} and noticing that $\frac{\phi_{n-1}}{|x|^{2s}+\frac{1}{n}} \leq \frac{\phi_n}{|x|^{2s}}$, we get
\begin{equation*} 
\int_{\Omega} f \phi_n \, dx \leq \int_{\Omega}u \, dx = C. 
\end{equation*}
Therefore, the sequence $\{f \phi_n\}$ is uniformly bounded in $L^1(\Omega)$. Also, since $\{f \phi_n\}$ is increasing, applying the Monotone Convergence Theorem and invoking Lemma \ref{Lem1} we obtain
\begin{equation*}
C_1 \int_{B_r(0)} |x|^{-\beta} f \, dx \leq \int_{\Omega} f \phi \, dx \leq C.
\end{equation*}
Also, from Remark \ref{RemarkADD01}, it follows that 
\begin{equation*} 
\int_{\Omega} \frac{\mu}{\delta^{s(\gamma-1)}} \, dx < + \infty. 
\end{equation*}

Now assume that
\begin{equation}
\label{EqSSUU1}
\int_{B_r(0)} |x|^{-\beta} f \, dx \leq C, \qquad \text{for some $r$ and} \,\, B_r(0) \Subset \Omega,
\end{equation}
and
\begin{equation}
\label{EqSSUU+1}
\int_{\Omega} \frac{\mu}{\delta^{s(\gamma-1)}} \, dx < + \infty.
\end{equation}
Let $u_n \in X_0^s(\Omega)$ be the weak energy solutions to the problems
\begin{equation}
\label{EqNS3}
\begin{cases}
(-\Delta )^s u_n = \lambda \dfrac{u_{n-1}}{|x|^{2s}+\frac{1}{n}}+\dfrac{\mu}{(u_{n-1}+\frac{1}{n})^\gamma}+f_n & \quad \mathrm{in} \,\, \Omega,\\ u_n>0 & \quad \mathrm{in} \,\, \Omega, \\ u_n=0 & \quad \mathrm{in} \,\, \big(\mathbb{R}^N \setminus \Omega \big),
\end{cases}
\end{equation}
where
\begin{equation*} 
\begin{cases}
(-\Delta )^s u_0 = f_1 & \quad \mathrm{in} \,\, \Omega,\\ u_0>0 & \quad \mathrm{in} \,\, \Omega, \\ u_0=0 & \quad \mathrm{in} \,\, \big(\mathbb{R}^N \setminus \Omega \big).
\end{cases}
\end{equation*}
Here $f_n=T_n(f)$. Again we have $u_0 \leq u_1 \leq \cdots \leq u_{n-1} \leq u_n$ in $\mathbb{R}^N$. Using $\phi \in X_0^s(\Omega)$, the solution to \eqref{EqNS0}, as a test function in \eqref{EqNS3} we obtain
\begin{equation}
\label{EqNS4}
\int_{\mathbb{R}^N}  u_n (-\Delta)^{s} \phi \, dx = \lambda \int_{\Omega} \frac{u_{n-1} \phi}{|x|^{2s}+\frac{1}{n}} \, dx + \int_{\Omega} \frac{\mu\phi}{(u_{n-1}+\frac{1}{n})^\gamma} \, dx  + \int_{\Omega} f_n \phi \,dx.
\end{equation}
On the other hand, using $u_n$ as a test function in the weak formulation of \eqref{EqNS0}, we get
\begin{equation}
\label{EqNS5}
\int_{\mathbb{R}^N}  u_n (-\Delta)^{s} \phi \, dx = \lambda \int_{\Omega} \frac{u_{n} \phi}{|x|^{2s}} \, dx + \int_{\Omega} u_n \,dx.
\end{equation}
From \eqref{EqNS4} and \eqref{EqNS5} and using Lemma \ref{Lem1} together with \eqref{EqSSUU1}, and \eqref{EqSSUU+1} we obtain
\begin{equation}
\label{EqNS5.0005}
\begin{aligned}
\int_{\Omega} u_n \, dx \leq \int_{\Omega} f_n \phi \, dx + &  \int_{\Omega} \frac{\mu \phi}{(u_{n-1}+\frac{1}{n})^\gamma} \, dx \leq \int_{\Omega} f \phi \, dx + \int_{\Omega} \frac{\mu \phi}{u_{0}^{\gamma}} \, dx \\
& \leq C_1 \int_{\Omega} f |x|^{-\beta} \, dx + c_1 c^{-\gamma} \int_{\Omega} \frac{\mu}{\delta^{s(\gamma-1)}} \, dx \\
& \leq C.
\end{aligned}
\end{equation}
Notice that in the last inequality, we have used $u_0 \geq c \delta^s$, and  $\phi \sim c_1\delta^s$ near the boundary, $\partial \Omega$, for some $c_1>0$, since $\phi$ is the solution to \eqref{EqNS0}. This follows by a result of \cite{MR3168912} together with the Comparison Principle for the fractional Laplacian.

Since $u_n$ is increasing and also uniformly bounded in $L^1(\Omega)$, by the Monotone Convergence Theorem we conclude that $ u := \lim_{n \to \infty} u_n$ is a function in $L^1(\Omega)$. We want to show that $u$ is a weak solution to problem \eqref{Eq1}. For this purpose let $\psi \in X_0^s(\Omega) \cap L^{\infty}(\Omega)$ be the unique positive weak energy solution to  
\begin{equation*} 
\begin{cases}
(-\Delta )^s \psi = 1 & \quad \mathrm{in} \,\, \Omega, \\ \psi=0 & \quad \mathrm{in} \,\, \big(\mathbb{R}^N \setminus \Omega \big).
\end{cases}
\end{equation*}
Using $\psi$ as a test function in \eqref{EqNS3} and noting that $\psi \sim \delta^s$, from \eqref{EqNS5.0005} we get
\begin{equation*}  \lambda \int_{\Omega} \frac{u_{n-1}}{|x|^{2s}+\frac{1}{n}} \,  \delta^s dx + \int_{\Omega} \frac{\mu }{(u_{n-1}+\frac{1}{n})^\gamma} \, \delta^s dx \leq C_2 \int_{\Omega} u_n \, dx \leq C_2C.
\end{equation*}
Thus by applying the Monotone Convergence Theorem we get 
\begin{equation*} 
\dfrac{u_{n-1}}{|x|^{2s}+\frac{1}{n}} + f_n \nearrow \dfrac{u}{|x|^{2s}} + f, \qquad \mathrm{in\,\,} L^1(\Omega, \, \delta^s dx).
\end{equation*}
Also since 
\begin{equation*}  
\Big|\frac{\mu}{(u_{n-1}+\frac{1}{n})^\gamma} \, \delta^s \Big| \leq \Big|\frac{\mu}{u_{0}^\gamma} \, \delta^s \Big| \leq \frac{\mu}{\delta^{s(\gamma-1)}} \in L^1(\Omega), 
\end{equation*} 
by the Dominated Convergence Theorem we have
\begin{equation*} 
\frac{\mu}{(u_{n-1}+\frac{1}{n})^\gamma} \to \frac{\mu}{u^\gamma}, \qquad \mathrm{in\,\,} L^1(\Omega, \, \delta^s dx). \end{equation*}
Therefore, $u$ satisfies the equation \eqref{Eq1} in the following weak sense:
\begin{equation*} 
\int_{\mathbb{R}^N} u (-\Delta)^{s} \phi \, dx = \lambda \int_{\Omega} \frac{u \phi}{|x|^{2s}} \, dx + \int_{\Omega} \frac{\mu \phi}{u^\gamma} \, dx  + \int_{\Omega} f \phi \,dx, \quad \forall \phi \in \mathcal{T}(\Omega).
\end{equation*}
Tesing $T_k(u_n)$ in \eqref{EqNS3}, and using \eqref{EqSSUU+1}, we can show that $T_k(u_n) \to T_k(u)$ weakly in $X_0^s(\Omega)$ (similar to the arguments in the proof of Proposition \ref{Pro2}).  Moreover, since $ \lambda \dfrac{u_{n-1}}{|x|^{2s}+\frac{1}{n}}+\dfrac{\mu}{(u_{n-1}+\frac{1}{n})^\gamma}+f_n $ converges strongly in $L^1(\Omega, \, \delta^s dx)$, then by mimicking the proofs of \cite[Proposition 2.3]{GVPF} and \cite[Theorem 23]{MR3393266} (or directly by adapting the Green operator’s viewpoint of the Laplacian case \cite[Theorem 1.2.2]{MR3156649}), we obtain
\begin{itemize}
\item $ u \in L^p(\Omega)$ for all $ p \in [1, \frac{N}{N-2s})$.
\item $(-\Delta )^{\frac{s}{2}} u \in L^p(\Omega)$, for all $p \in [1, \frac{N}{N-s})$.
\end{itemize}
Noting that since we have $N>2s$, therefore $\frac{N}{N-s} <2$. Now, by invoking Theorem 5 and Proposition 10 in chapter 5 of the reference book \cite{MR0290095}, we get that $u \in X_0^{s_1,p}(\Omega)$, for all $s_1<s$ and for all $p<\frac{N}{N-s}$. (In \cite{MR0290095}, $X_0^{s,p}(\Omega)$ reads as $\Lambda_{s}^{p,p}(\mathbb{R}^N)$, and $\mathcal{L}_{s}^p(\mathbb{R}^N)$ denotes the space of Bessel potentials, see \cite[subsection 3.2]{MR0290095}.)

\end{proof}

\section{Some uniqueness results and the rate of the growth of solutions}
\label{Section3.5}

In this section, we have some uniqueness results. Also, with some summability assumptions on the data of $\mu$ and $f$, we find the rate of the growth of solutions. 

At first for the special case $\mu \equiv 1$, by studying the behaviour of solutions near the boundary we discuss the uniqueness of solutions to problem \eqref{Eq1}.

\begin{proposition} 
\label{PropositionEs}
If $\mu \equiv 1$ then the solution obtained to problem \eqref{Eq1} in Theorem \ref{Thm1} behaves as:
\begin{equation}
\label{PropositionEsEq}
\begin{cases}
k_1 \delta^{s} (x) \leq |x|^{\beta} u(x), \qquad & 0<\gamma<1, \\
k_1 \delta^{s} (x) \left( \ln \left(\dfrac{r}{\delta^s (x)} \right) \right)^{\frac{1}{2}} \leq |x|^{\beta} u(x), \qquad & \gamma=1,   \\
k_1 \delta^{\frac{2s}{\gamma+1}} (x) \leq |x|^{\beta} u(x), \qquad & \gamma>1,
\end{cases}
\end{equation}
for any $x \in \Omega$, and some $k_1>0$, where $r> \mathrm{diam}(\Omega)$. Here $\beta$ is as defined in Lemma \ref{Lem1}.
\end{proposition}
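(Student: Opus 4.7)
The plan is to establish \eqref{PropositionEsEq} by exhibiting, for each range of $\gamma$, an explicit subsolution of the desired profile at the level of the approximating problems \eqref{Eq6} (with $\mu_n\equiv 1$), and then passing to the monotone limit $w_n\to u$ from the proof of Theorem \ref{Thm1}. The ansatz I take is
\[
\underline w(x)\,=\,\kappa\,|x|^{-\beta}\,h_\gamma(\delta^s(x)),\qquad h_\gamma(t)=\begin{cases} t,&0<\gamma<1,\\ t\,\bigl(\ln(r/t)\bigr)^{1/2},&\gamma=1,\\ t^{2/(\gamma+1)},&\gamma>1,\end{cases}
\]
extended by zero outside $\Omega$, with $\kappa>0$ to be chosen small and $r>\mathrm{diam}(\Omega)$. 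The exponent $2/(\gamma+1)$ in the supercritical range and the square-root-log correction at $\gamma=1$ are the standard fractional Lazer--McKenna profiles forced by matching $(-\Delta)^s\underline w$ against the singular reaction $1/\underline w^{\gamma}$ along the boundary; multiplication by $|x|^{-\beta}$ absorbs the Hardy weight through the identity \eqref{Eq3...01}.

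The first step is to exploit that $W(x)=|x|^{-\beta}$ satisfies $(-\Delta)^s W=\lambda W/|x|^{2s}$ pointwise on $\mathbb R^N\setminus\{0\}$, which is exactly what the characterisation \eqref{Eq3...01} of $\beta$ provides. This reduces, modulo commutator terms coming from the non-locality of $(-\Delta)^s$, the computation of $(-\Delta)^s\underline w-\lambda\underline w/|x|^{2s}$ to a standard fractional boundary estimate for $(-\Delta)^s h_\gamma(\delta^s)$, for which the tubular-neighbourhood asymptotics recorded in \cite{MR3168912} are available. Choosing $\kappa$ sufficiently small then yields the pointwise inequality
\[
(-\Delta)^s\underline w\,\leq\,\lambda\,\frac{\underline w}{|x|^{2s}}+\frac{1}{\underline w^{\gamma}}+f\qquad\text{in }\Omega,
\]
i.e., $\underline w$ is a subsolution to the $n$-th approximating problem \eqref{Eq6}. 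A comparison argument in the spirit of Proposition \ref{Pro1.5} (subtract the two weak formulations and test with $(\underline w-w_n)^+$, using strict monotonicity of $(-\Delta)^s-\lambda/|x|^{2s}$ granted by Hardy's inequality) then yields $w_n\geq\underline w$ for every $n\geq 1$; letting $n\to\infty$ and using $w_n\nearrow u$ delivers \eqref{PropositionEsEq}.

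The main technical obstacle I expect is the explicit evaluation of $(-\Delta)^s\bigl(|x|^{-\beta}h_\gamma(\delta^s)\bigr)$: because $(-\Delta)^s$ is non-local the Hardy factor $|x|^{-\beta}$ and the boundary factor $h_\gamma(\delta^s)$ do not decouple, and one must control mixed non-local terms together with the tail outside $\Omega$, uniformly up to $\partial\Omega$. The borderline case $\gamma=1$ is the most delicate since the critical relation $2s/(\gamma+1)=s$ holds there, and the $\sqrt{\log}$ correction has to be generated from the next-order term in the expansion of $(-\Delta)^s\bigl(\delta^s\sqrt{\ln(r/\delta^s)}\bigr)$; I would handle this by localising with a smooth cutoff in a tubular neighbourhood of $\partial\Omega$, computing there where $\delta$ is smooth, and absorbing the residual errors with the baseline bounds $w_n\gtrsim\delta^s$ (from Hopf's lemma, cf.\ Remark \ref{RemarkADD01}) and $w_n\gtrsim|x|^{-\beta}$ in some $B_\rho(0)\Subset\Omega$ (from Lemma \ref{Lem1}) that are already at our disposal.
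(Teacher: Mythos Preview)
Your strategy is workable in spirit but substantially harder than what the paper does, and the part you yourself flag as ``the main technical obstacle'' is in fact a genuine gap: you never carry out the commutator estimate for $(-\Delta)^s\bigl(|x|^{-\beta}h_\gamma(\delta^s)\bigr)$. Because of non-locality, the product does not split, and the cross term
\[
C_{N,s}\int_{\mathbb{R}^N}\frac{\bigl(|x|^{-\beta}-|y|^{-\beta}\bigr)\bigl(h_\gamma(\delta^s(x))-h_\gamma(\delta^s(y))\bigr)}{|x-y|^{N+2s}}\,dy
\]
must be shown to be dominated by $1/\underline w^{\,\gamma}$ uniformly up to $\partial\Omega$ and up to the origin. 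This is not a routine localisation, especially at $\gamma=1$ where the boundary profile is borderline; your sketch does not supply the required bounds. A secondary inaccuracy: the approximating sequence $\{w_n\}$ from \eqref{Eq6} is obtained by a fixed-point argument and is not asserted to be monotone in $n$, so ``$w_n\nearrow u$'' is not available; only $w_n\to u$ a.e.\ along a subsequence is.

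The paper bypasses the product computation entirely by \emph{decoupling} the two singularities. It argues in two independent steps: (i) near the origin, Lemma~\ref{Lem1} already gives $|x|^{\beta}u(x)\ge C_1$ on some ball $B_\epsilon(0)$; (ii) globally, one compares $u$ with the solution $w$ of the \emph{pure} Lazer--McKenna problem $(-\Delta)^s w=w^{-\gamma}$ (no Hardy term), for which the exact boundary rates $w\sim\delta^s$, $w\sim\delta^s(\ln(r/\delta^s))^{1/2}$, $w\sim\delta^{2s/(\gamma+1)}$ are known from \cite{MR3842325,MR3797614}. Since $\lambda u/|x|^{2s}+f\ge 0$, $u$ is a supersolution of that problem and the comparison principle yields $u\ge w$. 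Away from the origin $|x|^{\beta}$ is bounded below, so $|x|^{\beta}u\ge c\,w$ gives the boundary profile; on $B_\epsilon(0)$ the boundary weight $h_\gamma(\delta^s)$ is bounded above, so step~(i) suffices. Combining the two regions yields \eqref{PropositionEsEq} with no product-rule issues at all. This is both shorter and avoids the analytic difficulty you identified.
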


\begin{proof}
First of all notice that by Lemma \ref{Lem1}, there exist a constant $C_1>0$ such that
\begin{equation}
\label{ESTIMMM1}
|x|^{\beta} u(x) \geq C_1, \qquad \text{in} \,\, B_{\epsilon}(0).
\end{equation}
Now let $w$ be the weak energy solution to the following problem.
\begin{equation*} 
\begin{cases}
(-\Delta )^s w = \dfrac{1}{w^{\gamma}} & \quad \mathrm{in} \,\, \Omega,\\ w>0 & \quad \mathrm{in} \,\, \Omega, \\ w=0 & \quad \mathrm{in} \,\, \big(\mathbb{R}^N \setminus \Omega \big).
\end{cases}
\end{equation*}
By \cite[Theorem 2.9]{MR3842325} or \cite[Theorem 1.2]{MR3797614} we know that $w$ satisfies:
\begin{equation}
\label{ESTIMMM2}
\begin{cases}
k_1 \delta^{s} (x) \leq w(x) \leq k_2 \delta^{s} (x), \qquad & 0<\gamma<1, \\
k_1 \delta^{s} (x) \left( \ln \left(\dfrac{r}{\delta^s (x)} \right) \right)^{\frac{1}{2}} \leq w(x) \leq k_1 \delta^{s} (x) \left( \ln \left(\dfrac{r}{\delta^s (x)} \right) \right)^{\frac{1}{2}}, \qquad & \gamma=1, \\
k_1 \delta^{\frac{2s}{\gamma+1}} (x) \leq w(x) \leq k_2 \delta^{\frac{2s}{\gamma+1}} (x), \qquad & \gamma>1,
\end{cases}
\end{equation} 
for some $k_1,k_2>0$, $r>\mathrm{diam}(\Omega)$, and any $x \in \Omega$. By the Comparison Principle for the fractional Laplacian operator, e.g. \cite[Proposition 2.17]{MR2270163}, we obtain $u(x) \geq w(x)$, which together with \eqref{ESTIMMM1} and \eqref{ESTIMMM2} gives \eqref{PropositionEsEq}.
\end{proof}

\begin{remark}
\label{Reemark}
Notice that by using the estimates in Proposition \ref{PropositionEs} and applying the H\"older inequality and the fractional Hardy-Sobolev inequality (and convexity of $\Omega$ only for $0<s<\frac{1}{2}$), \cite[Theorem 1.1]{MR3814363}, we get

\begin{itemize}
\item
If $0<\gamma<1$:
\begin{equation*} 
\begin{aligned}
\Bigg| \int_{\Omega} \frac{\phi}{u^{\gamma}} \, dx \Bigg| \leq  k_1^{-\gamma} \int_{\Omega} \frac{|x|^{\beta \gamma} |\phi|}{\delta^{s\gamma}} \, dx \leq C \Bigg( \int_{\Omega} \frac{\phi^2}{\delta^{2s\gamma}} \, dx \Bigg)^{\frac{1}{2}} & \leq C_1 \| \phi \|_{X_0^{s\gamma}(\Omega)} \\
& \leq C_2 \| \phi \|_{X_0^{s}(\Omega)},
\end{aligned}
\end{equation*}
where in the last inequality, we used the continuous embedding of $X_0^{s_2}(\Omega)$ into $X_0^{s_1}(\Omega)$, for any $s_1<s_2$.
\item
If $\gamma=1$:
\begin{equation*} 
\begin{aligned}
\Bigg| \int_{\Omega} \frac{\phi}{u} \, dx \Bigg| & \leq  k_1^{-1} \int_{\Omega} \frac{|x|^{\beta}|\phi|}{\delta^s(x) \left( \ln \left(\dfrac{r}{\delta^s (x)} \right) \right)^{\frac{1}{2}}} \, dx \\
& \leq C \Bigg( \int_{\Omega}  \frac{1}{\left|\ln \left(\dfrac{r}{\delta^s (x)} \right) \right|} \, dx \Bigg)^{\frac{1}{2}} \Bigg( \int_{\Omega} \frac{\phi^2}{\delta^{2s}} \, dx \Bigg)^{\frac{1}{2}} & \leq C_1 \| \phi \|_{X_0^{s}(\Omega)}.
\end{aligned}
\end{equation*}

\item
If $\gamma>1$:
\begin{equation*} 
\begin{aligned}
\Bigg| \int_{\Omega} \frac{\phi}{u^{\gamma}} \, dx \Bigg| \leq  k_1^{-\gamma} \int_{\Omega} \frac{|x|^{\beta \gamma} |\phi|}{\delta^{\frac{2s \gamma}{\gamma+1}}} \, dx & \leq C \Bigg( \int_{\Omega} \frac{1}{\delta^{2s\frac{\gamma-1}{\gamma+1}}} \, dx \Bigg)^{\frac{1}{2}} \Bigg( \int_{\Omega} \frac{\phi^2}{\delta^{2s}} \, dx \Bigg)^{\frac{1}{2}} \\
& \leq C_1 \Bigg( \int_{\Omega} \frac{1}{\delta^{2s\frac{\gamma-1}{\gamma+1}}} \, dx \Bigg)^{\frac{1}{2}} \|\phi\|_{X_0^{s}(\Omega)}.
\end{aligned}
\end{equation*}
If in addition we assume $2s(\gamma-1)<\gamma+1$, then
\begin{equation*} 
\int_{\Omega} \frac{\phi}{u^{\gamma}} \, dx \leq C_2 \|\phi\|_{X_0^{s}(\Omega)}.
\end{equation*}
\end{itemize}

For general domains with some boundary regularity, the fractional Hardy-Sobolev inequality is proved for $s \in[ \frac{1}{2},1)$. See \cite{MR3933834, MR2085428, MR3021545}. But in \cite{MR3814363}, the authors proved the fractional Hardy-Sobolev inequality for any $s \in (0,1)$, by using the fact that the domain is a convex set and its distance from the boundary is a superharmonic function.
\end{remark}

\subsection*{Uniqueness in the special case $\mu \equiv 1$, and $0<\gamma \leq 1$, or $\gamma>1$ with $2s(\gamma-1)<\gamma+1$.}
Let $u_1$ and $u_2$ be two solutions in $X_{\mathrm{loc}}^s(\Omega)$ to problem \eqref{Eq1} and define $w=u_1-u_2$. Then we have
\begin{equation}
\label{Uniquness11}
\int_{\mathbb{R}^N} w (-\Delta)^{s} \phi \, dx = \lambda \int_{\Omega} \frac{w \phi}{|x|^{2s}} \, dx + \int_{\Omega} \frac{\phi}{u_1^{\gamma}}-\frac{\phi}{u_2^{\gamma}} \, dx, \quad \forall \phi \in \mathcal{T}(\Omega).
\end{equation}
The fractional Hardy-Sobolev inequality and a density argument, shows that the equality \eqref{Uniquness11} holds for all $\phi \in X_{0}^s(\Omega)$, see remark \ref{Reemark}. This means that $w \in X_0^s(\Omega)$. Now by using $w^-$ as a test function in \eqref{Uniquness11} and applying Hardy inequality we deduce that $w^-\equiv 0$. So we reach at the conclusion that $u_1 \geq u_2$. Similar argument shows that $u_1 \leq u_2$. Therefore $u_1=u_2$, and the uniqueness follows.

\begin{remark}
The assumption $\mu \equiv 1$ is taken for the purpose of simplification. However, we can assume any $\mu \geq m$, for some positive constant $m$, such that
\begin{equation*} 
\begin{cases}
\displaystyle\int_{\Omega} \mu^2 \delta^{2s(1-\gamma)} \, dx < + \infty \qquad & 0<\gamma<1, \\
\displaystyle\int_{\Omega}  \dfrac{\mu^2}{ \left| \ln \left( \dfrac{r}{\delta^s} \right) \right|} \, dx < + \infty \qquad & \gamma=1, \\
\displaystyle\int_{\Omega} \mu^2 \delta^{2s \frac{1-\gamma}{1+\gamma}} \, dx < + \infty \qquad & \gamma>1, \,\,\, \text{and} \,\,\, \gamma(2s-1)<(2s+1), \\
\end{cases}
\end{equation*}
and the above argument works. For a further discussion see \cite[Theorem 5.2]{MR3842325} which is about a Brezis-Oswald type result concerning uniqueness.
\end{remark}

Once again, because of the interest in uniqueness, we have another definition to solutions of \eqref{Eq1}. In fact, we would like to consider the entropy solution. The motivation of the definition comes from the works \cite{MR1354907, MR1409661}. In what follows, we would consider $0< \gamma \leq 1$.
\begin{definition}
\label{NewDEFN}
Assume $0 \leq \mu, f \in L^1(\Omega)$, and $0< \gamma \leq 1$. We say that $u$ is an entropy solution to \eqref{Eq1} if
\begin{itemize} 
\item
for every $K \Subset \Omega$, there exists $C_K>0$ such that $u(x) \geq C_K$ in $K$ and also $u \equiv 0$ in $\big(\mathbb{R}^N \setminus \Omega \big)$;
\item
$T_k(u) \in X_0^s(\Omega)$, for every $k$, and $u$ satisfies the following family of inequalities:
\begin{equation*}
\label{Eq3.500}
\begin{aligned}
\int_{\{|u-\phi| < k \}} (-\Delta)^{\frac{s}{2}} u (-\Delta)^{\frac{s}{2}} (u-\phi) \, dx \leq \lambda & \int_{\Omega} \frac{u T_k(u-\phi)}{|x|^{2s}} \, dx + \int_{\Omega} u^{-\gamma} \mu T_k(u-\phi) \, dx \\
& + \int_{\Omega} fT_k(u-\phi) \, dx,
\end{aligned}
\end{equation*}
for any $k$ and any $\phi \in X_0^s(\Omega) \cap L^{\infty}(\Omega)$, and also together with this extra assumption that the second term on the right-hand side of the above inequality be finite for any $\phi \in X_0^s(\Omega) \cap L^{\infty}(\Omega)$. The well-posedness of this term will be clear after the construction of entropy solution.
\end{itemize}
\end{definition}

Let $u$ and $v$ be two entropy solution. Testing $u$ with $\phi=T_h(v)$ and $v$ with $\phi=T_h(u)$ in the weak formulation of entropy inequalities, we have 
\begin{equation}
\label{ENTEq20}
\begin{aligned}
\int_{\{|u-T_h(v)|<k\}} & (-\Delta)^{\frac{s}{2}} u (-\Delta)^{\frac{s}{2}} (u-T_h(v)) \, dx - \lambda \int_{\Omega} \frac{u T_k(u-T_h(v))}{|x|^{2s}} \, dx  \\
& \leq \int_{\Omega} \frac{\mu T_k(u-T_h(v))}{u^{\gamma}} \, dx + \int_{\Omega} f T_k(u-T_h(v)) \, dx,
\end{aligned}
\end{equation}
and
\begin{equation}
\label{ENTEq21}
\begin{aligned}
\int_{\{|v-T_h(u)|<k\}} & (-\Delta)^{\frac{s}{2}} v (-\Delta)^{\frac{s}{2}} (v-T_h(u)) \, dx  - \lambda \int_{\Omega} \frac{v T_k(v-T_h(u))}{|x|^{2s}} \, dx \\
& \leq \int_{\Omega} \frac{\mu T_k(v-T_h(u))}{v^{\gamma}} \, dx + \int_{\Omega} f T_k(v-T_h(u)) \, dx.
\end{aligned}
\end{equation}
Adding up the left-hand sides of \eqref{ENTEq20} and \eqref{ENTEq21} and restricting them to 
\begin{equation*} 
A_0^h=\{ x \in \Omega \,:\, |u-v|<k, \, |u|<h, \, |v|<h \},
\end{equation*}
we have the following estimate by using Hardy inequality
\begin{equation}
\label{ENTEq22}
\int_{A_0^h} |(-\Delta)^{\frac{s}{2}} (u-v)|^2 \, dx - \lambda \int_{A_0^h} \frac{(u-v)^2}{|x|^{2s}} \, dx \geq \frac{\Lambda_{N,s}-\lambda}{\Lambda_{N,s}} \int_{A_0^h} |(-\Delta)^{\frac{s}{2}} (u-v)|^2 \, dx.
\end{equation}
Also, summing the right-hand sides of \eqref{ENTEq20} and \eqref{ENTEq21} when restricted to $A^h_0$ gives 
\begin{equation}
\label{ENTEq23}
\int_{A_0^h} (u-v)(u^{-\gamma}-v^{-\gamma}) \mu \, dx \leq 0.
\end{equation}
Now, consider the set $ A_1^h=\{ x \in \Omega \,:\, |u-T_h(v)|<k, \, |v| \geq h \}$. 
When restricted to $A_1^h$, we have the following for the left-hand side of \eqref{ENTEq20}:
\begin{equation}
\label{ENTEq24}
\begin{aligned}
\int_{A_1^h} |(-\Delta)^{\frac{s}{2}} u|^2 \, dx - \lambda \int_{A_1^h} \frac{u(u-h)}{|x|^{2s}} \, dx & \geq \int_{A_1^h} |(-\Delta)^{\frac{s}{2}} u|^2 \, dx - \lambda \int_{A_1^h} \frac{u^2}{|x|^{2s}} \, dx \\
& \geq \frac{\Lambda_{N,s}-\lambda}{\Lambda_{N,s}} \int_{A_1^h} |(-\Delta)^{\frac{s}{2}} u|^2 \, dx \geq 0.
\end{aligned}
\end{equation}
On the other hand, when restricted to $A_1^h$, the right-hand side of \eqref{ENTEq20} is
\begin{equation}
\label{ENTEq24+++}
\int_{A_1^h} u^{-\gamma} (u-h) \mu \, dx + \int_{A_1^h} f (u-h) \, dx,
\end{equation}
which goes to zero as $h \to \infty$.

Finally on the remaining set $ A_2^h=\{ x \in \Omega \,:\, |u-T_h(v)|<k, \, |v| < h, \, |u| \geq h \}$, the left-hand side of \eqref{ENTEq20} is as follows
\begin{equation}
\label{ENTEq25}
\int_{A_2^h} (-\Delta)^{\frac{s}{2}} u (-\Delta)^{\frac{s}{2}} (u-v) \, dx - \lambda \int_{A_2^h} \frac{u (u-v)}{|x|^{2s}} \, dx,
\end{equation}
which goes to zero as $h \to \infty$.

The right-hand side of \eqref{ENTEq20}, when restricted to $A_2^h$, is as follows
\begin{equation}
\label{ENTEq25++}
\begin{aligned}
\int_{A_2^h} u^{-\gamma} (u-v) \mu \, dx + \int_{A_2^h} f(x) (u-v) \, dx,
\end{aligned}
\end{equation}
which also goes to zero as $h \to \infty$.

Similarly, we can estimate the left-hand side of \eqref{ENTEq21} on the sets $B_1^h=\{x \in \Omega \,:\, |v-T_h(u)|<k, \, |u| \geq h \}$ and $B_2^h=\{  x \in \Omega \,:\, |v-T_h(u)|<k, \, |u| < h, \, |v| \geq h  \}$ and find that
\begin{equation}
\label{ENTEq26}
\int_{B_1^h} |(-\Delta)^{\frac{s}{2}} v|^2 \, dx - \lambda \int_{B_1^h} \frac{v(v-h)}{|x|^{2s}} \, dx \geq \frac{\Lambda_{N,s}-\lambda}{\Lambda_{N,s}} \int_{B_1^h} |(-\Delta)^{\frac{s}{2}} v|^2 \, dx \geq 0,
\end{equation}
and
\begin{equation}
\label{ENTEq27}
\int_{B_2^h} (-\Delta)^{\frac{s}{2}} v (-\Delta)^{\frac{s}{2}} (v-u) \, dx - \lambda \int_{B_2^h} \frac{v (v-u)}{|x|^{2s}} \, dx \to 0, \qquad \text{as} \,\, h \to 0.
\end{equation}

On the other hand for the right-hand side of \eqref{ENTEq21} on the sets $B_1^h=\{x \in \Omega \,:\, |v-T_h(u)|<k, \, |u| \geq h \}$ and $B_2^h=\{  x \in \Omega \,:\, |v-T_h(u)|<k, \, |u| < h, \, |v| \geq h  \}$, we have: 
\begin{equation}
\label{ENTEq26++}
\int_{B_1^h} v^{-\gamma} (v-h) \mu \, dx +\int_{B_1^h} f (v-h) \, dx \to 0, \qquad \text{as} \,\, h \to 0,
\end{equation}
and
\begin{equation}
\label{ENTEq27++}
\int_{B_2^h} v^{-\gamma} (v-u) \mu \, dx + \int_{B_2^h} f (v-u) \, dx \to 0, \qquad \text{as} \,\, h \to 0.
\end{equation}
Putting all the estimates \eqref{ENTEq22}, \eqref{ENTEq23}, \eqref{ENTEq24}, \eqref{ENTEq24+++}, \eqref{ENTEq25}, \eqref{ENTEq25++}, \eqref{ENTEq26}, \eqref{ENTEq27}, \eqref{ENTEq26++}, and \eqref{ENTEq27++} together we obtain
\begin{equation*} 
\int_{A_0^h} |(-\Delta)^{\frac{s}{2}} (u-v)|^2 \, dx \leq \mathrm{o}(h), \qquad \text{as} \,\, h \to 0. 
\end{equation*}
Now, since $A_0^h$ goes to $\{ |u-v|<k \}$, as $h \to 0$ we have
\begin{equation*} 
\int_{\{|u-v|<k \}} |(-\Delta)^{\frac{s}{2}} (u-v)|^2 \, dx \leq 0, \qquad \forall k. 
\end{equation*}
Therefore $u \equiv v$, and the uniqueness is proved.

Now, we construct an entropy solution for the case $0<\gamma \leq 1$, $ \mu \in L^{\big(\frac{2_s^*}{1-\gamma}\big)'}(\Omega) \cap L^2(\Omega)$ and a datum of $f\in L^1(\Omega)$ such that satisfies the integrability condition \eqref{IntegrabilityC}. Let consider the following approximating problems:
\begin{equation}
\label{ENTEq30.}
\begin{cases}
(- \Delta)^s u_n = \lambda \dfrac{u_n}{|x|^{2s}}+ \dfrac{\mu_n}{(u_n+\frac{1}{n})^{\gamma}}+ f_n &   \mathrm{in} \,\, \Omega,\\ u_n>0 &   \mathrm{in} \,\, \Omega, \\ u_n=0 &   \mathrm{in} \,\,  \big(\mathbb{R}^N \setminus \Omega \big).
\end{cases}
\end{equation}
Here $\mu_n=T_n(\mu)$ and $f_n=T_n(f)$. The increasing behaviour of $ \mu_n(u_n+\frac{1}{n})^{-\gamma} + f_n$, and the monotonicity of the operator $(-\Delta)^s u - \lambda \frac{u}{|x|^{2s}}$ will ensure the existence of an increasing sequence of solutions to problems \eqref{ENTEq30.}. Testing \eqref{ENTEq30.} with $T_k(u_n-\phi)$ implies that $\{ T_k(u_n-\phi)\}_{n=1}^{\infty}$ is a bounded sequence in $X_0^s(\Omega)$ for each fixed $k$ and each fixed $\phi \in X_0^s(\Omega) \cap L^{\infty}(\Omega)$. Therefore, up to a subsequence $T_k(u_n-\phi) \to T_k(u-\phi)$ weakly in $X_0^s(\Omega)$ as $n \to \infty$, where $u$ is the weak solution to \eqref{Eq1} with $\mu \in L^{\big(\frac{2_s^*}{1-\gamma}\big)'}(\Omega) \cap L^2(\Omega)$. Also, since $\{T_k(u_n-\phi)\}_{n=1}^{\infty}$ is an increasing sequence of non-negative functions, once more the strict monotonicity of $(- \Delta)^s $ implies that $ T_k(u_n-\phi) \to T_k(u-\phi)$ strongly in $X_0^s(\Omega)$ (see for example \cite[Lemma 2.18]{MR3479207} for this compactness result). Now, using $T_k(u_n-\phi)$ as a test function in \eqref{ENTEq30.}, and noting that 
\begin{equation*} 
\begin{aligned}
 \int_{\Omega} \Bigg| \frac{\mu_nT_k(u_n-\phi)}{(u_n+\frac{1}{n})^{\gamma}} \Bigg| \, dx & \leq \int_{\Omega} \frac{\mu_n |T_k(u_n-\phi)|}{u_1^{\gamma}} \, dx \\
 & \leq \|\mu\|_{L^2(\Omega)} \int_{\Omega} \frac{T_k^2(u_n-\phi)}{\delta^{2s\gamma}} \, dx \\
 & \leq C_1 \big\| |T_k(u_n-\phi)| \big\|_{X_0^{s\gamma}(\Omega)} \\
& \leq C_2 \|T_k(u_n-\phi) \|_{X_0^s(\Omega)} \leq C_3<+ \infty, \qquad \text{uniformly in} \,\, n,
\end{aligned}
\end{equation*}
(because of $u_1 \sim c \delta^s$, near the boundary, and applying the H\"older and the fractional Hardy-Sobolev inequalities) we may pass to the limit and find an entropy solution even with the equalities instead of the inequalities in Definition \ref{NewDEFN}. Notice that from the above estimate and by Fatou's Lemma we deduce
$$ \int_{\Omega} \frac{T_k(u - \phi) \mu}{u^{\gamma}} \, dx < + \infty, $$
for any $\phi \in X_0^s(\Omega) \cap L^{\infty}(\Omega)$, and any $k>0$.

We end this section by a Calder\'on-Zygmund type property to solutions of problem \eqref{Eq1}. See \cite{MR2257147} for this property in the local case without the presence of singular nonlinearity and \cite{MR2592976} for the case without the Hardy potential. 

As mentioned before in Lemma \ref{Lem1}, any supersolution to \eqref{Eq1} is unbounded, i.e., $u(x) \gtrsim |x|^{-\beta}$ in a neighborhood of the origin. Now we have the following result, which says this rate is precisely the rate of the growth of $u$ for the regular data of $\mu$ and $f$.

\begin{theorem}
\label{theorem-calderon-zygmundP}
Let $0 \leq \mu,f \in L^m(\Omega)$, $m > \frac{N}{2s}$, and consider $u \in X_0^s(\Omega)$, as the weak energy solution to \eqref{Eq1} with $0 < \lambda < \Lambda_{N,s}$. Then $u(x) \leq C|x|^{-\beta}$ a.e. in $\Omega$.
\end{theorem}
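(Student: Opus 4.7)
My plan is to establish the matching upper bound for $u$ by viewing the equation as a linear problem for the operator $L_\lambda := (-\Delta)^s - \lambda/|x|^{2s}$ with a controlled right-hand side, and then invoking the Calder\'on--Zygmund / Green function machinery for $L_\lambda$ developed in \cite[Section 4]{MR3479207}. Concretely, I would rewrite the equation as
\begin{equation*}
(-\Delta)^s u - \lambda \frac{u}{|x|^{2s}} = g \quad \text{in } \Omega, \qquad g := \frac{\mu}{u^\gamma} + f,
\end{equation*}
and argue that the model profile $|x|^{-\beta}$ (which is exactly the positive solution of the homogeneous problem $L_\lambda w = 0$ in $\mathbb{R}^N\setminus\{0\}$ by identity \eqref{Eq3...01}) governs the pointwise size of $u$ from above, matching the lower bound from Lemma \ref{Lem1}.

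The first step is to show that $g$ has enough integrability. Lemma \ref{Lem1} gives $u(x) \geq c_1 |x|^{-\beta}$ in some ball $B_\rho(0) \Subset \Omega$, which yields $\mu/u^\gamma \leq c_1^{-\gamma} |x|^{\beta\gamma} \mu$, hence belongs to $L^m(B_\rho(0))$. On $\Omega \setminus B_\rho(0)$, the weak comparison principle applied to the sub-problem $(-\Delta)^s v \leq \mu_1/(v+1)^\gamma + f$ (or directly Hopf's lemma together with $u \geq C \delta^s$ as noted in Remark \ref{RemarkADD01}) produces a strictly positive lower bound, so $\mu/u^\gamma \lesssim \mu \, \delta^{-s\gamma}$ there. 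Combined with $f \in L^m$ and $m > N/(2s)$, this places $g$ in the weighted $L^m$-class (with weight $\delta^s$) for which the Calder\'on--Zygmund theory of $L_\lambda$ applies.

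The second step is to carry out the argument at the level of the approximating problems \eqref{Eq-Blow-up} used earlier: let $u_n$ denote the regularized solutions (with the bounded weight $(|x|^{2s}+\tfrac1n)^{-1}$ and truncated singular nonlinearity $\mu_n/(u_n+\tfrac1n)^\gamma$), which satisfy $(-\Delta)^s u_n - \lambda u_n/(|x|^{2s}+\tfrac1n) = g_n$ with $g_n$ uniformly bounded in the appropriate $L^m$-class. Adapting the Moser/Krylov--Safonov scheme from \cite[Section 4]{MR3479207} to this perturbed operator, one obtains the uniform pointwise estimate $u_n(x) \leq C|x|^{-\beta}$ with $C$ independent of $n$. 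Passing to the limit (the sequence $u_n$ is monotone and converges to $u$ almost everywhere as in the proof of Theorem \ref{Thm1.5}) then gives $u(x) \leq C|x|^{-\beta}$ a.e. in $\Omega$.

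The main obstacle I expect is the second step, namely feeding the singular datum $g$, which is only controlled in a $\delta^s$-weighted $L^m$ space near $\partial\Omega$, into a Calder\'on--Zygmund type estimate that delivers a \emph{weighted pointwise} bound by $|x|^{-\beta}$; the weight $|x|^{-\beta}$ is tied to the Hardy scale while the weight $\delta^s$ is tied to the boundary. The remedy is to split $\Omega$ into $B_\rho(0)$ and $\Omega \setminus B_\rho(0)$, use the sharp interior Green-function bound $G_\lambda(x,y) \lesssim |x|^{-\beta}|y|^{-\beta}$ near $y=0$ together with standard fractional heat-kernel / Green-function estimates on the complement, and sum the two contributions. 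Standard Schauder-type comparison arguments for $L_\lambda$ at the end remove the remaining dependence on the approximation parameter.
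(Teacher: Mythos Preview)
Your basic reduction---viewing the equation as $L_\lambda u = g$ with $g = \mu/u^\gamma + f$ and then extracting a pointwise Hardy-type bound from the linear theory of \cite{MR3479207}---is correct in spirit, but the paper's route is substantially more direct and sidesteps the very obstacle you identify at the end.

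The paper makes the \emph{ground state substitution} $v(x) := |x|^\beta u(x)$, so that the claim $u \le C|x|^{-\beta}$ becomes simply $v \in L^\infty(\Omega)$. The function $v$ satisfies the weighted equation
\[
L_\beta v := C_{N,s}\,\mathrm{P.V.}\int_{\mathbb{R}^N}\frac{v(x)-v(y)}{|x-y|^{N+2s}}\,\frac{dy}{|x|^\beta|y|^\beta}
= \frac{\mu}{|x|^{\beta(1-\gamma)} v^\gamma} + |x|^{-\beta} f \quad\text{in }\Omega,
\]
and one then runs a Stampacchia level-set iteration: test with $G_k(v)$ for $k\ge 1$, apply the weighted Sobolev inequality for $L_\beta$ from \cite[Proposition~2.11]{MR3479207}, and obtain $|A_z| \le C|A_k|^{\theta}/(z-k)^{2_s^*}$ with exponent $\theta = 2_s^*(1-\tfrac{1}{2_s^*}-\tfrac{1}{m})>1$ precisely because $m>N/(2s)$. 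Stampacchia's lemma then gives $|A_{k_0}|=0$ for some finite $k_0$, i.e.\ $v\le k_0$.

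The decisive advantage over your plan is that the singular factor $1/v^\gamma$ on the right-hand side is only ever integrated over the superlevel set $A_k=\{v\ge k\}$ with $k\ge 1$, where it is trivially bounded by $k^{-\gamma}$. This makes your boundary worry---that $\mu/u^\gamma$ might only lie in a $\delta^s$-weighted $L^m$ class near $\partial\Omega$---disappear entirely: no lower bound on $u$ near $\partial\Omega$, no splitting of the domain, no approximation, and no Green-function estimate are needed. Your Green-function/approximation scheme could presumably be pushed through, but it is longer, requires pointwise two-sided Green-function bounds for $L_\lambda$ on a bounded domain that are not stated in the paper, and forces you to confront a boundary difficulty that the substitution $v=|x|^\beta u$ removes for free.
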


\begin{proof}
We follow \cite[Theorem 4.1]{MR3479207}. Also see \cite[Lemma 3.3]{MR2592976}. Let $k \geq 1$. By the change of variable $v(x):=|x|^{\beta} u(x)$, it can be checked that $v$ solves:
\begin{equation}
\label{Eq1GRSV}
\begin{cases}
L_{\beta} v = \dfrac{\mu}{|x|^{\beta(1-\gamma)} v^{\gamma}} + |x|^{-\beta} f & \quad \mathrm{in} \,\, \Omega,\\ v>0 & \quad \mathrm{in} \,\, \Omega, \\ v=0 & \quad \mathrm{in} \,\, \big(\mathbb{R}^N \setminus \Omega \big),
\end{cases}
\end{equation}
where the operator $L_{\beta}$ is as follows:
\begin{equation*} 
L_{\beta} v := C_{N,s} \mathrm{P.V.} \int_{\mathbb{R}^N} \frac{v(x)-v(y)}{|x-y|^{N+2s}} \frac{dy}{|x|^{\beta} |y|^{\beta}}. \end{equation*}
See \cite[Section 2]{MR3479207} for the properties of this operator and the associated weighted fractional Sobolev space.

Using $G_k(v)$ as a test function in \eqref{Eq1GRSV}, and following the proof of \cite[Theorem 4.1]{MR3479207} we obtain
\begin{equation}
\label{Eq15.2-ZYG}
\begin{aligned}
\frac{C_{N,s}}{2} \iint_{D_{\Omega}} \frac{|G_k(v(x))-G_k(v(y))|^2}{|x-y|^{N+2s}} \, \frac{dx}{|x|^{\beta}} \frac{dy}{|x|^{\beta}} & \leq  \int_{A_k} |x|^{\beta \gamma} \frac{\mu}{v^{\gamma}} G_k(v) \, \frac{dx}{|x|^{\beta}} \\
& \quad + \int_{A_k} f G_k(v) \, \frac{dx}{|x|^{\beta}},
\end{aligned}
\end{equation}
where $A_k:=\{ x \in \Omega \,:\, v(x) \geq k\}$. Applying the weighted Sobolev inequality \cite[Proposition 2.11]{MR3479207} in the left-hand side of \eqref{Eq15.2-ZYG}, and noting that $|x|^{\beta \gamma} \leq C_2$, in $\Omega$, gives
\begin{equation*} 
C_1 \|G_k(v)\|_{L^{2_s^*}(\Omega, |x|^{-\beta} \, dx)}^2 \leq C_2 \int_{A_k} \frac{\mu}{v^{\gamma}} G_k(v) \, \frac{dx}{|x|^{\beta}} + \int_{A_k} f G_k(v) \, \frac{dx}{|x|^{\beta}}.
\end{equation*}
For the first term in the right-hand side of the above inequality, by using the H\"older inequality we get
\begin{equation*} 
\Bigg| \int_{A_k} \frac{\mu}{v^{\gamma}} G_k(v) \, \frac{dx}{|x|^{\beta}} \Bigg| \leq k^{-\gamma} \|\mu\|_{L^m(\Omega)} \|G_k(v)\|_{L^{2_s^*}(\Omega, |x|^{-\beta} \, dx)} |A_k|^{1-\frac{1}{2_s^*}-\frac{1}{m}}.
\end{equation*}
Similarly, for the second term
\begin{equation*} 
\Bigg| \int_{A_k} f G_k(v) \, \frac{dx}{|x|^{\beta}} \Bigg| \leq \|f\|_{L^m(\Omega)} \|G_k(v)\|_{L^{2_s^*}(\Omega, |x|^{-\beta} \, dx)} |A_k|^{1-\frac{1}{2_s^*}-\frac{1}{m}}.
\end{equation*}
Putting the results together, we obtain
\begin{equation}
\label{Eq15.2-ZYG10}
\|G_k(v)\|_{L^{2_s^*}(\Omega, |x|^{-\beta} \, dx)} \leq C_3 |A_k|^{1-\frac{1}{2_s^*}-\frac{1}{m}}.
\end{equation}
On the other hand, since $\Omega$ is bounded, there exists a constant $C_4 > 0$ such that
\begin{equation}
\label{Eq15.2-ZYG11}
\|G_k(v)\|_{L^{2_s^*}(\Omega, |x|^{-\beta} \, dx)} \geq C_4 \|G_k(v)\|_{L^{2_s^*}(\Omega)}.
\end{equation}
Moreover, for any $z>k$, we have that $A_z \subset A_k$ and $G_k(v) \chi_{A_z} \geq (z - k)$. Thus from \eqref{Eq15.2-ZYG10} and \eqref{Eq15.2-ZYG11} we have
\begin{equation*} 
(z-k) |A_z|^{\frac{1}{2_s^*}} \leq C_5 |A_k|^{1-\frac{1}{2_s^*}-\frac{1}{m}},
\end{equation*}
or equivalently
\begin{equation*} 
|A_z| \leq C_6 \frac{|A_k|^{2_s^*(1-\frac{1}{2_s^*}-\frac{1}{m})}}{(z-k)^{2_s^*}}.
\end{equation*}
Now by invoking \cite[Lemma 2.23]{MR3479207} with the choice of $\psi(h):=|A_h|$, and noting that $ 2_s^*(1-\frac{1}{2_s^*}-\frac{1}{m}) >1$, because of $m> \frac{N}{2s}$, we obtain that there exists $k_0$ such that $\psi(k) \equiv 0$, for any $k \geq k_0$. Thus $v(x) \leq k_0$, a.e. in $\Omega$. This means that $u(x) \leq k_0 |x|^{-\beta}$, a.e. in $\Omega$.
\end{proof}

\section{The parabolic case and a stabilization result}
\label{Section4}
In this section, we study on the following evolution problem 
\begin{equation}
\label{EqP.1}
\begin{cases}
u_t+(-\Delta )^s u = \lambda \dfrac{u}{|x|^{2s}} +\dfrac{1}{u^{\gamma}}+f(x,t) & \quad \mathrm{in} \,\, \Omega \times (0,T), \\ u>0 & \quad \mathrm{in} \,\, \Omega \times (0,T), \\ u =0 & \quad \mathrm{in} \,\, (\mathbb{R}^N \setminus \Omega) \times (0,T), \\ u(x,0)=u_0 & \quad \mathrm{in} \,\, \mathbb{R}^N,
\end{cases}
\end{equation}
where $u_0 \in X_0^s(\Omega)$ satisfies an appropriate cone condition which will be precised later. In what follows, we will mention an existence and uniqueness and also a stabilization result to problem \eqref{EqP.1}. 

First of all, we define a notion of a weak solution. Before it, we need the following class of test functions. 
\begin{equation*} 
\mathcal{A}(\Omega_T) := \Big\{u \,\,:\,\, u \in L^2(\Omega \times (0,T)), \,\, u_t \in L^2(\Omega \times (0,T)), \,\, u \in L^{\infty} (0,T; X_0^s(\Omega)) \Big\}.
\end{equation*}

Notice that Aubin-Lions-Simon Lemma, see \cite{MR916688}, implies that the following embedding is compact.
\begin{equation}
\label{EqP.105}
\mathcal{A}(\Omega_T) \hookrightarrow C([0,T]; L^2(\Omega)).
\end{equation}

\begin{definition}
Assume $u_0 \in L^2(\Omega)$, and $f \in L^{2}(\Omega \times (0,T))$. We say that $u \in \mathcal{A}(\Omega_T)$ is a weak supersolution (subsolution) to problem \eqref{EqP.1} if 
\begin{itemize} 
\item
for every $K \Subset \Omega \times (0,T)$, there exists $C_K>0$ such that $u(x,t) \geq C_K$ a.e. in $K$ and also $u \equiv 0$ in $\big(\mathbb{R}^N \setminus \Omega \big) \times [0,T)$;
\item
for every non-negative $\phi \in \mathcal{A}(\Omega_T)$, we have
\begin{equation*} 
\begin{aligned}
\int_{0}^{T} \int_{\Omega} u_t \phi \, dx dt &+ \int_{0}^{T} \int_{\mathbb{R}^N} (-\Delta)^{\frac{s}{2}} u (-\Delta)^{\frac{s}{2}} \phi \, dx dt \\
& \geq (\leq ) \lambda \int_{0}^{T} \int_{\Omega} \dfrac{u\phi}{|x|^{2s}}  \, dx dt + \int_{0}^{T} \int_{\Omega} \dfrac{ \phi}{u^{\gamma}} \, dx dt + \int_{0}^{T} \int_{\Omega} f \phi \, dx dt;
\end{aligned}
\end{equation*}
 and also together with this extra assumption that the second term on the right-hand side of the above inequality be finite for any $\phi \in \mathcal{A}(\Omega_T)$. The well-posedness of the second term on the right-hand side will be clear after the construction of solution.
\item
$u(x,0) \geq (\leq) u_0(x)$ a.e. in $\Omega$. 
\end{itemize}
If $u$ is a weak supersolution and subsolution then we say that $u$ is a weak solution. Notice that by the embedding \eqref{EqP.105}, the initial condition $u(x,0)=u_0$ make sense.
\end{definition}

Before outlining our theorems, we need to define the following sets:
\begin{itemize}
\item
Let $\mathcal{U}^{\mathrm{Sing}}_{\gamma}$ be the set of all functions in $L^{2}(\Omega)$ such that there exists $k_1>0$ such that
\begin{equation*} 
\begin{cases}
k_1 \delta^{s} (x) \leq |x|^{\beta} u(x), \qquad & 0<\gamma<1, \\
k_1 \delta^{s} (x) \left( \ln \left(\dfrac{r}{\delta^s (x)} \right) \right)^{\frac{1}{2}} \leq |x|^{\beta} u(x), \qquad & \gamma=1, \\
k_1 \delta^{\frac{2s}{\gamma+1}} (x) \leq |x|^{\beta} u(x), \qquad & \gamma>1,
\end{cases}
\end{equation*}
where $r> \mathrm{diam}(\Omega)$.
\item
Let $W(\Omega):= \{\phi \in C(\overline{\Omega} \setminus \{0\}) \,\, : \,\, |x|^{\beta} \phi \in C(\overline{\Omega}) \}$, which is equipped with the $L^{\infty}(\Omega, |x|^{\beta} \, dx)$ norm, i.e.
\begin{equation*} 
\|u\|_{L^{\infty}(\Omega, |x|^{\beta} \, dx)} := \mathrm{ess\,sup\,} \Big\{ |x|^{\beta} |u(x)| \,\, : \,\, x \in \Omega \Big\}.
\end{equation*}

\end{itemize}

Also, we need to the following definition.
\begin{definition}
We say that $u(t) \in \mathcal{U}^{\mathrm{Sing}}_{\gamma}$ uniformly for each $t \in [0,T]$ when there exists $\psi_1, \psi_2 \in \mathcal{U}^{\mathrm{Sing}}_{\gamma}$ such that $\psi_1(x) \leq u(x,t) \leq \psi_2(x)$ a.e. $(x,t) \in \Omega \times [0,T]$.
\end{definition}

\begin{theorem}
\label{Thm3}
Let $0 \leq g \in L^{\infty}(\Omega, |x|^{\beta} \, dx)$, $0<\lambda < \Lambda_{N,s}$, and $\theta>0$. Then the following problem has a unique weak energy solution $u_{\theta} \in X_0^s(\Omega) \cap \mathcal{U}^{\mathrm{Sing}}_{\gamma}$ for any $0<\gamma  \leq 1$, or $\gamma>1$ with $2s(\gamma-1)< \gamma+1$.
\begin{equation}
\label{EqP.2}
\begin{cases}
u+\theta \Big((-\Delta )^s u - \lambda \dfrac{u}{|x|^{2s}} - \dfrac{1}{u^{\gamma}} \Big) = g & \quad \mathrm{in} \,\, \Omega,\\ u>0 & \quad \mathrm{in} \,\, \Omega, \\ u=0 & \quad \mathrm{in} \,\, \big(\mathbb{R}^N \setminus \Omega \big).
\end{cases}
\end{equation}
Moreover, there exists a positive constant $ \lambda_*<\Lambda_{N,s} $ such that for any $\lambda \in (0,\lambda_*)$, this unique solution also belongs to $W(\Omega)$.
\end{theorem}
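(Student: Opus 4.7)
The plan is to combine the techniques developed in Sections \ref{Section3} and \ref{Section3.5} with the change of variables used in Theorem \ref{theorem-calderon-zygmundP}, splitting the argument into existence via approximation, the two-sided bounds placing the solution in $\mathcal{U}^{\mathrm{Sing}}_{\gamma}$, uniqueness by monotonicity, and finally the $W(\Omega)$ regularity through a De Giorgi truncation on a rescaled unknown.

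\textbf{Existence.} First I would solve the regularised problems
\begin{equation*}
u_n + \theta\Bigl((-\Delta)^s u_n - \lambda \tfrac{u_n}{|x|^{2s}} - \tfrac{1}{(u_n+1/n)^\gamma}\Bigr) = T_n(g) \quad\text{in }\Omega,
\end{equation*}
with $u_n\equiv 0$ outside $\Omega$. The linear operator $u\mapsto u+\theta[(-\Delta)^s u-\lambda u/|x|^{2s}]$ is coercive and strictly monotone on $X_0^s(\Omega)$ by the fractional Hardy inequality (since $\lambda<\Lambda_{N,s}$), and the regularised singular nonlinearity is bounded and Lipschitz, so a Schauder-type fixed-point argument identical in structure to the one producing \eqref{Eq6} gives a unique $u_n\in X_0^s(\Omega)\cap L^\infty(\Omega)$. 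The proof of Proposition \ref{Pro1.5} yields monotonicity of $\{u_n\}$ and uniform lower bounds on compacts of $\Omega$, while testing with $u_n$ and absorbing the Hardy term gives uniform boundedness in $X_0^s(\Omega)$. Passing to the limit (Fatou on the singular term, as in the proof of Theorem \ref{Thm1}) produces a weak energy solution $u_\theta\in X_0^s(\Omega)$ of \eqref{EqP.2}.

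\textbf{Lower bound and uniqueness.} To place $u_\theta$ in $\mathcal{U}^{\mathrm{Sing}}_{\gamma}$ I would split into the behaviour at $x=0$ and at $\partial\Omega$. Since $(-\Delta)^s u_\theta - \lambda u_\theta/|x|^{2s} = (g-u_\theta)/\theta + 1/u_\theta^\gamma \geq -u_\theta/\theta$, after absorbing the lower-order term into the shifted linear operator $(-\Delta)^s+\theta^{-1}\mathrm{id}$ (whose associated Hardy profile has the same exponent $\beta$) Lemma \ref{Lem1} gives $u_\theta\gtrsim |x|^{-\beta}$ on some $B_\delta(0)$. For the boundary behaviour I would compare $u_\theta$ with a suitably scaled solution $w$ of $w+\theta(-\Delta)^s w=\theta/w^\gamma$ (without the Hardy potential), whose boundary profile is exactly that of \eqref{PropositionEsEq} by \cite[Theorem 2.9]{MR3842325}; the Hardy term has the favourable sign in the ordering $u_\theta\geq w$, so the Comparison Principle yields the three-branch $\delta$-estimate. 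For uniqueness, given two solutions $u_1,u_2\in X_0^s(\Omega)\cap\mathcal{U}^{\mathrm{Sing}}_{\gamma}$, Remark \ref{Reemark} ensures that each $1/u_i^\gamma$ is a continuous linear functional on $X_0^s(\Omega)$ in the admissible range of $\gamma$, so $w:=u_1-u_2$ is itself an admissible test function in the difference of the two equations. The resulting identity
\begin{equation*}
\|w\|_{L^2(\Omega)}^2 + \theta\Bigl(\|w\|_{X_0^s(\Omega)}^2-\lambda\int_\Omega \tfrac{w^2}{|x|^{2s}}\,dx\Bigr) + \theta\int_\Omega \Bigl(\tfrac{1}{u_2^\gamma}-\tfrac{1}{u_1^\gamma}\Bigr)w\,dx = 0
\end{equation*}
has all three terms nonnegative (Hardy plus monotonicity of $t\mapsto -t^{-\gamma}$), forcing $w\equiv 0$.

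\textbf{The $W(\Omega)$ regularity for small $\lambda$, and the main difficulty.} I would follow the proof of Theorem \ref{theorem-calderon-zygmundP} after the substitution $v(x):=|x|^\beta u_\theta(x)$, which transforms \eqref{EqP.2} into
\begin{equation*}
v+\theta L_\beta v = \theta\tfrac{|x|^{\beta(\gamma+1)}}{v^\gamma} + |x|^\beta g \quad\text{in }\Omega,
\end{equation*}
with $v=0$ in $\mathbb{R}^N\setminus\Omega$. A De Giorgi truncation on $G_k(v)$ combined with the weighted Sobolev inequality \cite[Proposition 2.11]{MR3479207} yields $v\in L^\infty(\Omega)$, provided $|x|^{\beta(\gamma+1)}$ and $|x|^\beta g$ belong to $L^m(\Omega)$ for some $m>N/(2s)$; both conditions force smallness of $\beta$, and through the identity \eqref{Eq3...01} this translates into an explicit threshold $\lambda_*=\lambda_*(N,s)<\Lambda_{N,s}$. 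A standard local Hölder regularity statement for the shifted operator $u+\theta(-\Delta)^s u$ away from the origin then upgrades $v$ to $C(\overline{\Omega})$, giving $u_\theta\in W(\Omega)$. The hardest point is exactly this last step: one must control the singular contribution $|x|^{\beta(\gamma+1)}/v^\gamma$ uniformly in the De Giorgi truncation without a pointwise positive lower bound for $v$ up to $\partial\Omega$ (where the $\mathcal{U}^{\mathrm{Sing}}_{\gamma}$-bound degenerates), and simultaneously calibrate the interplay between $\beta$, $\gamma$, $s$ and the threshold $m>N/(2s)$ in order to pin down a concrete $\lambda_*$.
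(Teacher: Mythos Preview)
Your overall strategy---approximation, two-sided bounds to land in $\mathcal{U}^{\mathrm{Sing}}_\gamma$, monotonicity for uniqueness, and a De Giorgi truncation on $v=|x|^\beta u_\theta$ for the $W(\Omega)$ regularity---is essentially the paper's, but the paper streamlines the middle step in a way that avoids two technical detours you take.

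For the cone membership, the paper does \emph{not} split the analysis into ``origin via Lemma \ref{Lem1}'' plus ``boundary via a Hardy-free comparison''. Instead it uses a single comparison function: let $w\in X_0^s(\Omega)\cap\mathcal{U}^{\mathrm{Sing}}_\gamma$ be the unique energy solution of $(-\Delta)^s w=\lambda w/|x|^{2s}+w^{-\gamma}$, and set $\underline{u}=mw$, $\overline{u}=Mw$. A direct computation shows that for $M$ large and $m$ small (independently of the approximation parameter) these are super- and subsolutions of \eqref{EqP.2}, so strict monotonicity of $A_\theta(u)=u+\theta[(-\Delta)^s u-\lambda u/|x|^{2s}-u^{-\gamma}]$ traps $u_\theta$ between $mw$ and $Mw$. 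Since $w$ already carries both the $|x|^{-\beta}$ blow-up at the origin and the correct $\delta$-profile at $\partial\Omega$ (Proposition \ref{PropositionEs}), the $\mathcal{U}^{\mathrm{Sing}}_\gamma$ membership follows immediately. This bypasses your appeal to a ``shifted'' version of Lemma \ref{Lem1} for $(-\Delta)^s+\theta^{-1}\mathrm{id}$, which is not a direct application and would require its own proof, and it also bypasses the need to check that the auxiliary problem $w+\theta(-\Delta)^s w=\theta/w^\gamma$ has the boundary rates in \eqref{ESTIMMM2} (the reference you cite treats $(-\Delta)^s w=w^{-\gamma}$, not that shifted equation).

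For the $W(\Omega)$ part the paper is also more economical: rather than rerunning the De Giorgi argument on $v$ and wrestling with the control of $|x|^{\beta(\gamma+1)}/v^\gamma$ near $\partial\Omega$ (the difficulty you flag), it observes that $g\in L^\infty(\Omega,|x|^\beta\,dx)$ forces $g\in L^m(\Omega)$ for some $m>N/(2s)$ precisely when $\beta$ is small, i.e.\ when $\lambda<\lambda_*$, and then invokes the Comparison Principle together with Theorem \ref{theorem-calderon-zygmundP} to get $u_\theta(x)\le C|x|^{-\beta}$. Continuity of $|x|^\beta u_\theta$ in $\overline{\Omega}$ then comes from interior regularity \cite{MR3168912} away from the origin and the boundary-continuity argument of \cite{MR3797614}. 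So the ``hardest point'' you identify is absorbed into results already proved, rather than confronted anew.
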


\begin{proof}
We follow the proof of \cite[Theorem 2.4]{MR3842325}. For any $\epsilon>0$, let consider the following approximating problem:
\begin{equation}
\label{EqP.2HHH1}
\begin{cases}
u_{\epsilon,\theta}+\theta \Big((-\Delta )^s u_{\epsilon,\theta} - \lambda \dfrac{u_{\epsilon,\theta}}{|x|^{2s}} - \dfrac{1}{(u_{\epsilon,\theta}+\epsilon)^{\gamma}} \Big) = g & \quad \mathrm{in} \,\, \Omega,\\ u_{\epsilon,\theta}>0 & \quad \mathrm{in} \,\, \Omega, \\ u_{\epsilon,\theta}=0 & \quad \mathrm{in} \,\, \big(\mathbb{R}^N \setminus \Omega \big).
\end{cases}
\end{equation}
The existence of a unique energy solution easily follows by the classical variational methods. Indeed let $X_0^s(\Omega)^+:=\{u \in X_0^s(\Omega) \,|\, u \geq 0 \}$, and consider the corresponding energy functional to problem \eqref{EqP.2HHH1} as follows:
\begin{equation*}  
\begin{aligned}
I_{\epsilon,\theta}(u)  =  \frac{1}{2} \int_{\Omega} u^2 \, dx & + \frac{\theta C_{N,s}}{4} \|u\|_{X_0^s(\Omega)}^2 -\frac{\theta \lambda}{2} \int_{\Omega} \frac{u^2}{|x|^{2s}} \, dx \\ 
& - \frac{\theta}{1-\gamma} \int_{\Omega} (u+\epsilon)^{1-\gamma} \, dx - \int_{\Omega} gu \, dx, \qquad u \in X_0^s(\Omega)^+.
\end{aligned}
\end{equation*}
Notice that the last term is well-defined since $g \in L^{\infty}(\Omega, |x|^{\beta} \, dx) \subset L^2(\Omega)$. Using Hardy inequality, one can show that this functional $I_{\epsilon,\theta}: X_0^s(\Omega)^+ \to \mathbb{R}$ is weakly lower semi-continuous, coercive and strictly convex. Since $X_0^s(\Omega)^+$ is a closed subspace of the reflexive space $X_0^s(\Omega)^+$, therefore the existence of a unique minimizer is obvious by the classical theory (for instance see \cite[Chapter 1]{MR2722059}). Therefore, as a consequence, we get the existence of a unique energy solution to problem \eqref{EqP.2HHH1}.  

Let $0< \epsilon_1 \leq \epsilon_2$. We want to show that $u_{\epsilon_2, \theta} \leq u_{\epsilon_1, \theta}$ a.e. in $\Omega$. This easily follows by subtracting the weak formulations of $u_{\epsilon_i, \theta}$, $i=1,2$, and using $(u_{\epsilon_2, \theta} - u_{\epsilon_1, \theta})^+$ as a test function which together with the Hardy inequality implies $(u_{\epsilon_2, \theta} - u_{\epsilon_1, \theta})^+ \equiv 0$, a.e. in $\Omega$. Now let $w \in X_0^s(\Omega) \cap \mathcal{U}^{\mathrm{Sing}}_{\gamma}$ be the unique energy solution to
\begin{equation*} 
\begin{cases}
(-\Delta )^s w = \lambda \dfrac{w}{|x|^{2s}} +w^{-\gamma} & \quad \mathrm{in} \,\, \Omega,\\ w>0 & \quad \mathrm{in} \,\, \Omega, \\ w=0 & \quad \mathrm{in} \,\, \big(\mathbb{R}^N \setminus \Omega \big).
\end{cases}
\end{equation*}
Notice that for the general $\gamma>1$, we only know that $w \in X_{\mathrm{loc}}^s(\Omega)$. But since $2s(\gamma-1)< \gamma+1$, thanks to Remark \ref{Reemark}, we get $w \in X_0^s(\Omega)$ too. 

Now define $\overline{u}:=Mw$, for some $M>1$. Because of the same singular behaviour of $w$ and $g$ near the origin, and noting that $g$ is bounded, near the boundary, $\partial \Omega$, and $w$ behaves as $c \delta^s$, near the boundary, we can choose $M$ large enough (independent of $\epsilon$) such that
\begin{equation*} 
\begin{aligned}
\overline{u}+ \theta \Bigg( (-\Delta)^s \overline{u} - \lambda \frac{\overline{u}}{|x|^{2s}} - \frac{1}{(\overline{u}+\epsilon)^{\gamma}} \Bigg) & = Mw+ \theta \Bigg( \frac{M}{w^{\gamma}} - \frac{1}{(Mw+\epsilon)^{\gamma}} \Bigg) \\
& \geq  Mw+ \theta \Bigg( \frac{1}{(Mw)^{\gamma}} - \frac{1}{(Mw+\epsilon)^{\gamma}} \Bigg)  \\
& >g, \qquad \text{in} \,\, \Omega.
\end{aligned}
\end{equation*}
Since $A_{\theta} : X_0^s(\Omega) \cap \mathcal{U}^{\mathrm{Sing}}_{\gamma} \to X^{-s}(\Omega)$, $A_{\theta}(u):=u+\theta \big( (-\Delta)^s u - \lambda \frac{u}{|x|^{2s}} - u^{-\gamma} \big) $, is a strictly monotone operator for $0< \lambda < \Lambda_{N,s}$ (this strict monotonicity is the easy consequence of \cite[Lemma 3.1]{MR3842325} and the Hardy inequality) therefore $ u_{\epsilon, \theta} \leq \overline{u}$.
Thus $u_{\theta} \leq \overline{u}$, where $ u_{\theta}:= \lim_{\epsilon \to 0^+} u_{\epsilon, \theta}$. This implies that $u_{\theta}$ is a very weak (distributional) solution to problem \eqref{EqP.2}, i.e. 
\begin{equation}
\label{EqP.2HHH2}
\int_{\Omega} u_{\theta} \phi \, dx +\theta \Bigg( \int_{\mathbb{R}^N} u_{\theta} (-\Delta )^s \phi \, dx - \lambda \int_{\Omega} \dfrac{u_{\theta}}{|x|^{2s}} \phi \, dx - \int_{\Omega} \dfrac{\phi}{u_{\theta}^{\gamma}} \, dx \Bigg) = \int_{\Omega} g \phi \, dx,
\end{equation}
for any $\phi \in \mathcal{T}(\Omega)$. But in fact, we want to show that $u_{\theta}$ is an energy solution. For this purpose let $\underline{u}:=mw$, for some $m>0$. If we choose $m$ small enough such that
\begin{equation*} 
m^{\gamma+1} \Bigg( 1+ \frac{w^{\gamma+1}}{\theta} \Bigg) \leq 1+ m^{\gamma} \frac{g w^{\gamma}}{\theta}, \qquad \text{in} \,\, \Omega,
\end{equation*}
(which is possible by taking into consideration the behavior of $w$ and $g$ near the origin and the boundary,  $\partial\Omega$) then $\underline{u}$ will be a subsolution to problem \eqref{EqP.2} and with the similar arguments as in above we obtain $\underline{u} \leq u_{\theta}$ a.e. in $\Omega$. Thus $\underline{u} \leq u_{\theta} \leq \overline{u}$, which implies that $u_{\theta} \in \mathcal{U}^{\mathrm{Sing}}_{\gamma}$. On the other hand, by invoking the Hardy inequality and also because of the restrictions $0<\gamma \leq 1$, or $\gamma >1$ with $2s(\gamma-1)< \gamma+1$, a density argument shows that \eqref{EqP.2HHH2} holds for all $\phi \in X_0^s(\Omega)$ (see Remark \ref{Reemark}). This means that $u_{\theta} \in X_0^s(\Omega)$ is the unique energy solution to problem  \eqref{EqP.2}. 

Now, let $g \in L^m(\Omega)$, $m > \frac{N}{2s}$, which is possible if $m \beta < N$, or equivalently $\alpha > \frac{N-2s}{2}-\frac{N}{m}$. Since $\lambda=\lambda(\alpha)$, given by \eqref{Eq3...01}, is a continuous decreasing function for $\alpha \in [0, \frac{N-2s}{2})$, this recent condition is equivalent to $0< \lambda < \lambda_*$, for some $\lambda_*<\Lambda_{N,s}$. Thus Comparison Principle for the fractional Laplacian operator together with Theorem \ref{theorem-calderon-zygmundP} gives $u(x) \leq C |x|^{-\beta}$ a.e. in $\mathbb{R}^N$. Now, the interior regularity theory for the fractional Laplacian, that follows from \cite[Proposition 1.1]{MR3168912}, implies that $u \in C(\tilde{\Omega} \setminus B_{\epsilon}(0))$, for any $\tilde{\Omega} \Subset \Omega$ and any $\epsilon>0$ small enough. Moreover, by following the proof of \cite[Theorem 1.4]{MR3797614} we obtain the continuity of $u$ up to the boundary of $\Omega$. This completes the proof.

\end{proof}

Thanks to Hardy inequality and following the idea of \cite[Theorem 4.1]{MR3842325}, i.e. applying the semi-discretization in time with implicit Euler method, and also invoking the result of Theorem \ref{Thm3}, we will obtain the following existence result to problem \eqref{EqP.1}.  

\begin{theorem} 
\label{Thm4}
Let $ s \in (0,1)$, $0<\gamma \leq 1$, or $\gamma>1$ with $2s(\gamma-1)< \gamma+1$, and $0 < \lambda <\Lambda_{N,s}$. Also assume that $u_0 \in X_0^s(\Omega) \cap \mathcal{U}^{\mathrm{Sing}}_{\gamma}$, and $0 \leq f(x, t) \leq |x|^{\gamma\beta}$, $0 \leq t \leq T$. Then there is a unique positive weak solution in $\mathcal{A}(\Omega_T) \cap \mathcal{U}^{\mathrm{Sing}}_{\gamma}$ to problem \eqref{EqP.1}. Moreover, $u$ belongs to $C([0,T], X_0^s(\Omega))$, and $u(t) \in \mathcal{U}^{\mathrm{Sing}}_{\gamma}$ uniformly for each $t \in [0,T]$, and also for any $t \in [0,T]$
\begin{equation}
\label{Eq.P1ESTIMATEP}
\begin{aligned}
& \int_{0}^{t} \int_{\Omega} \Big|\frac{\partial u}{\partial \tau} \Big|^2 \, dx d\tau + \frac{C_{N,s}}{2} \|u(x,t)\|^2_{X_0^s(\Omega)} - \lambda \int_{\Omega} \dfrac{u^2(x,t)}{|x|^{2s}} \, dx \\
& \qquad \qquad - \frac{1}{1-\gamma} \int_{\Omega} u^{1-\gamma}(x,t) \, dx \\
& = \int_{0}^{t} \int_{\Omega} f(x,t) \frac{\partial u}{\partial \tau} \, dx d\tau + \frac{C_{N,s}}{2} \|u_0(x)\|^2_{X_0^s(\Omega)} - \lambda \int_{\Omega} \dfrac{u^2_0}{|x|^{2s}} \, dx \\
& \qquad \qquad - \frac{1}{1-\gamma} \int_{\Omega} u_0^{1-\gamma}(x) \, dx.
\end{aligned}
\end{equation}
In addition, if $0 < \lambda < \lambda_*$ ($\lambda_*$ is as in Theorem \ref{Thm3}), and $u_0 \in \overline{\mathcal{D}(L)}^{L^{\infty}(\Omega, |x|^{\beta} \, dx)}$, where
$$\mathcal{D}(L):=\Big\{ v \in X_0^s(\Omega) \cap \mathcal{U}^{\mathrm{Sing}}_{\gamma} \cap W(\Omega) \Big| L(v):=(-\Delta)^s v -\lambda \frac{v}{|x|^{2s}}- \frac{1}{v^{\gamma}} \in L^{\infty}(\Omega, |x|^{\beta} \, dx) \Big\},$$ 
then the solution obtained above belongs to $C([0,T]; W(\Omega))$.
\end{theorem}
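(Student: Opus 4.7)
The plan is to proceed by semi-discretization in time via the implicit Euler scheme, mirroring the approach in \cite{MR3842325}. Fix $n \in \mathbb{N}$, set $\tau_n = T/n$ and $f_k^n(x) = \tau_n^{-1}\int_{(k-1)\tau_n}^{k\tau_n} f(x,t)\,dt$. Starting from $u_0^n := u_0$, I define inductively $u_k^n \in X_0^s(\Omega)$ as the unique positive energy solution of
\begin{equation*}
u_k^n + \tau_n\Big( (-\Delta)^s u_k^n - \lambda\frac{u_k^n}{|x|^{2s}} - \frac{1}{(u_k^n)^{\gamma}} \Big) = u_{k-1}^n + \tau_n f_k^n \quad \text{in } \Omega,
\end{equation*}
which is exactly the resolvent problem \eqref{EqP.2} of Theorem \ref{Thm3} with $\theta = \tau_n$ and datum $g = u_{k-1}^n + \tau_n f_k^n$. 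Since $u_0 \in X_0^s(\Omega) \cap \mathcal{U}_\gamma^{\mathrm{Sing}}$ and $f(x,t) \leq |x|^{\gamma\beta}$ (so that $|x|^{\beta}f$ remains bounded on $\Omega$), an induction on $k$ shows the compatibility hypothesis $g \in L^{\infty}(\Omega, |x|^\beta dx) \cap L^2(\Omega)$ required by Theorem \ref{Thm3} holds at every step.

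The next task is to propagate uniform-in-$n$ bounds. Reusing the time-independent super- and subsolutions from the proof of Theorem \ref{Thm3} (namely $\overline{u} = Mw$ and $\underline{u} = mw$, where $w$ solves the stationary singular Hardy problem), an inductive comparison in $k$ gives $\underline{u} \leq u_k^n \leq \overline{u}$ a.e.\ in $\Omega$ for every $k, n$, placing the whole discrete family inside $\mathcal{U}_\gamma^{\mathrm{Sing}}$. To derive the energy estimate I test the scheme against the forward difference $(u_k^n - u_{k-1}^n)/\tau_n$ and sum in $k$: the Hardy term remains coercive because $\lambda < \Lambda_{N,s}$, while the singular contribution is controlled by a discrete convexity inequality for the primitive of $\sigma \mapsto -\sigma^{-\gamma}$ (that is, $\sigma^{1-\gamma}/(1-\gamma)$ for $\gamma \neq 1$, $\log \sigma$ for $\gamma = 1$), which is exactly the $u^{1-\gamma}$ term appearing in \eqref{Eq.P1ESTIMATEP}. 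I thus obtain uniform bounds of the piecewise-constant interpolant $\bar{u}^n$ in $L^{\infty}(0,T; X_0^s(\Omega))$ and of the piecewise-affine interpolant $\tilde{u}^n$ in $H^1(0,T; L^2(\Omega))$, and Aubin--Lions--Simon (see \eqref{EqP.105}) then provides strong convergence of a subsequence in $C([0,T]; L^2(\Omega))$.

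Passing to the limit, weak convergence in $L^{\infty}(0,T; X_0^s(\Omega))$ and Hardy's inequality handle the fractional Laplacian and the Hardy potential, while the uniform lower bound $u_k^n \geq \underline{u} > 0$ combined with a.e.\ pointwise convergence and dominated convergence handles the singular term $(u_k^n)^{-\gamma}$. The limit $u \in \mathcal{A}(\Omega_T) \cap \mathcal{U}_\gamma^{\mathrm{Sing}}$ is the desired weak solution, $u(t) \in \mathcal{U}_\gamma^{\mathrm{Sing}}$ uniformly in $t$ via the barriers, and the continuity $u \in C([0,T]; X_0^s(\Omega))$ is extracted from the limit energy identity \eqref{Eq.P1ESTIMATEP}. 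Uniqueness follows from the usual duality--monotonicity argument: for two solutions $u, v$, taking $u - v$ as test function (admissible in $X_0^s(\Omega)$ by the fractional Hardy--Sobolev inequality of Remark \ref{Reemark}), the strict monotonicity of $(-\Delta)^s - \lambda|x|^{-2s}$ (from Hardy) together with that of $-\sigma^{-\gamma}$ closes the estimate. Finally, when $0 < \lambda < \lambda_*$, Theorem \ref{Thm3} provides a nonlinear resolvent that is a contraction on the weighted space $W(\Omega)$; this makes $L$ $m$-accretive in $L^\infty(\Omega, |x|^\beta dx)$, and the Crandall--Liggett theorem yields $u \in C([0,T]; W(\Omega))$ whenever $u_0 \in \overline{\mathcal{D}(L)}^{L^{\infty}(\Omega, |x|^{\beta} dx)}$.

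The main obstacle I expect is twofold. First, the passage to the limit in the singular term $(u_k^n)^{-\gamma}$ demands a uniform-in-$n$ pointwise lower bound, which is why the time-independent subsolution $\underline{u} = mw$ must be produced and maintained \emph{before} any energy-type manipulation; without it, neither the discrete chain rule for the primitive of $\sigma^{-\gamma}$ nor the final identity \eqref{Eq.P1ESTIMATEP} is rigorous. Second, the restriction $\gamma > 1$ with $2s(\gamma - 1) < \gamma + 1$ is genuinely needed to invoke Remark \ref{Reemark}, so that the test function $u - v$ used for uniqueness lies in $X_0^s(\Omega)$ and so that the singular term remains an element of $X^{-s}(\Omega)$; this is where the precise interplay between the Hardy singularity at the origin and the boundary behaviour $\delta^{2s/(\gamma+1)}$ of the singular stationary solution becomes delicate.
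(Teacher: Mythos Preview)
Your proposal is correct and follows essentially the same route as the paper: implicit Euler time-discretization via Theorem \ref{Thm3}, the stationary barriers $\underline{u}=mw$ and $\overline{u}=Mw$ to trap the iterates in $\mathcal{U}_\gamma^{\mathrm{Sing}}$, testing with $u_k-u_{k-1}$ together with the discrete convexity inequality for the primitive of $-\sigma^{-\gamma}$ to obtain the energy bounds, Aubin--Lions--Simon compactness, uniqueness by monotonicity and Hardy, and finally $m$-accretivity in $L^\infty(\Omega,|x|^\beta dx)$ plus Crandall--Liggett for the $C([0,T];W(\Omega))$ statement. The only place where the paper is more explicit is the proof of $u\in C([0,T];X_0^s(\Omega))$: it first derives an \emph{inequality} version of \eqref{Eq.P1ESTIMATEP} from the discrete scheme (giving right-continuity) and then obtains the reverse inequality (hence left-continuity and the full identity) by testing the limiting equation with the difference quotient $\phi_r(u)(x,t)=r^{-1}(u(x,t+r)-u(x,t))$ and letting $r\to 0^+$; your sketch glosses over this two-sided argument, but the underlying idea is the same.
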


\begin{remark}
By invoking \cite[Proposition 5.3]{MR3492734}, it is straightforward to obtain that if $\lambda>\Lambda_{N,s}$, then problem \eqref{EqP.1} does not have any solution. Moreover, the similar complete blow-up phenomenon occurs as in the stationary case.
\end{remark}

Finally, the following theorem is about a stabilization result to problem \eqref{EqP.1}. By stabilization, we mean that if $\hat{u}(x)$ is the unique solution to the stationary problem with the datum of $f(x)$, then $u(x,t)$, the solution to the parabolic problem, converges to $\hat{u}(x)$, as $t \to \infty$.

\begin{theorem} 
\label{Thm5}
Let $ s \in (0,1)$, $0<\gamma \leq 1$, or $\gamma>1$ with $2s(\gamma-1)< \gamma+1$, and $0 < \lambda <\lambda_*$. Also assume that $u_0 \in \overline{\mathcal{D}(L)}^{L^{\infty}(\Omega, |x|^{\beta} \, dx)}$, and $0 \leq f(x,t)=f(x) \leq |x|^{\gamma\beta}$, $0 \leq t \leq T$. Then if $u(x,t)$ is the unique positive weak solution to problem \eqref{EqP.1}, then 
$$ u(x,t) \to \hat{u}(x), \qquad \text{in} \,\, L^{\infty}(\Omega, |x|^{\beta} \, dx) \,\, \text{as} \,\,  t \to + \infty,$$
where $\hat{u}$ is the unique weak solution to \eqref{Eq1} with $\mu \equiv 1$.
\end{theorem}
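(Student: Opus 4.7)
The plan is to combine the energy dissipation identity \eqref{Eq.P1ESTIMATEP} with a monotone sub/super-solution sandwich built from multiples of $\hat{u}$, and finally to upgrade convergence to the weighted uniform norm via the Calder\'on--Zygmund bound of Theorem \ref{theorem-calderon-zygmundP} together with Dini's theorem.

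First I would introduce the stationary Lyapunov functional
\begin{equation*}
\mathcal{E}(v) := \frac{C_{N,s}}{4}\|v\|^2_{X_0^s(\Omega)} - \frac{\lambda}{2}\int_{\Omega}\frac{v^2}{|x|^{2s}}\,dx - \frac{1}{1-\gamma}\int_{\Omega}v^{1-\gamma}\,dx - \int_{\Omega} f v \, dx
\end{equation*}
(with $\int \ln v$ replacing the singular term in the borderline case $\gamma=1$) and rewrite \eqref{Eq.P1ESTIMATEP} as $\mathcal{E}(u(t)) + \int_0^t \|u_\tau(\tau)\|_{L^2(\Omega)}^2\,d\tau = \mathcal{E}(u_0)$. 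Since $\lambda<\lambda_*<\Lambda_{N,s}$, Hardy's inequality makes the quadratic part of $\mathcal{E}$ coercive on $X_0^s(\Omega)$, so $\{u(\cdot,t)\}_{t\ge 0}$ stays uniformly bounded in $X_0^s(\Omega)$ and $\int_0^\infty\|u_\tau(\tau)\|_{L^2}^2\,d\tau<\infty$; in particular there is a sequence $t_n\uparrow\infty$ along which $u_\tau(\cdot,t_n)\to 0$ in $L^2(\Omega)$.

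Next I would build a monotone sandwich. Since $\hat{u}\in W(\Omega)$ by Theorem \ref{Thm3} and $u_0\in\overline{\mathcal{D}(L)}^{L^\infty(\Omega,|x|^\beta dx)}$, I can pick $0<m<1<M$ with $m\hat{u}\le u_0\le M\hat{u}$ a.e.\ in $\Omega$. Mimicking the sub/super-solution construction from the proof of Theorem \ref{Thm3}, the function $m\hat{u}$ is a stationary subsolution and $M\hat{u}$ a stationary supersolution of \eqref{EqP.1} (viewed as $t$-independent). Let $\underline{u}$, $\overline{u}$ be the solutions of \eqref{EqP.1} issued from $m\hat{u}$ and $M\hat{u}$, which exist by Theorem \ref{Thm4}. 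The comparison principle --- a consequence of the strict monotonicity of the operator $v\mapsto (-\Delta)^s v-\lambda v|x|^{-2s}-v^{-\gamma}$ on $X_0^s(\Omega)\cap\mathcal{U}^{\mathrm{Sing}}_\gamma$, tested against $(\underline u-u)^+$ and $(u-\overline u)^+$ --- yields $\underline{u}(x,t)\le u(x,t)\le \overline{u}(x,t)$. Autonomy of the equation combined with the inequality $\underline{u}(x,h)\ge \underline{u}(x,0)$ for small $h>0$ then makes $t\mapsto\underline{u}(x,t)$ non-decreasing and $t\mapsto\overline{u}(x,t)$ non-increasing.

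By monotonicity and the uniform $X_0^s$-bound, $\underline{u}(\cdot,t)\nearrow v_-$ and $\overline{u}(\cdot,t)\searrow v_+$ pointwise a.e., with $v_\pm\in X_0^s(\Omega)\cap\mathcal{U}^{\mathrm{Sing}}_\gamma$. Passing to the limit in the weak formulation along $t_n$, using $u_\tau(\cdot,t_n)\to 0$ in $L^2$ together with dominated convergence for the Hardy term and for $u^{-\gamma}$ (dominated by $(m\hat{u})^{-\gamma}$, which is in the appropriate dual class thanks to Remark \ref{Reemark}), I obtain that $v_\pm$ are weak solutions of the stationary problem \eqref{Eq1} with $\mu\equiv 1$; the uniqueness theorem of Section \ref{Section3.5} then forces $v_-=v_+=\hat{u}$, so $u(\cdot,t)\to\hat{u}$ pointwise a.e. For the weighted uniform convergence, the hypothesis $\lambda<\lambda_*$ is essential: Theorem \ref{theorem-calderon-zygmundP} applied at each time slice (via the domination $\underline{u},\overline{u}\le M\hat{u}$) gives the uniform-in-$t$ bound $\|\,|x|^\beta\overline{u}(\cdot,t)\|_{L^\infty(\Omega)}\le C$, and the $C([0,T];W(\Omega))$-regularity of Theorem \ref{Thm4} makes $t\mapsto |x|^\beta \underline{u}(\cdot,t)$ and $t\mapsto |x|^\beta \overline{u}(\cdot,t)$ monotone families in $C(\overline{\Omega})$ converging pointwise to the continuous function $|x|^\beta\hat{u}$. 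Dini's theorem then promotes this to uniform convergence on $\overline{\Omega}$, and the sandwich yields the stated convergence for $u$. The main obstacle is precisely this last weighted-uniform step: controlling simultaneously the Hardy singularity at the origin and the singular nonlinearity near $\partial\Omega$ uniformly in time forces the restriction $\lambda<\lambda_*$ and leans on the full regularity chain of Sections \ref{Section3}--\ref{Section4}.
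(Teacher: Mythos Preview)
Your overall architecture---sandwiching $u$ between monotone extremal solutions issued from a stationary sub- and supersolution, and then applying Dini's theorem to the families $|x|^\beta \underline u(\cdot,t)$, $|x|^\beta \overline u(\cdot,t)\in C(\overline\Omega)$---coincides with the paper's. The divergence is in how the pointwise limits $v_\pm$ of the extremal flows are identified as stationary solutions. The paper does \emph{not} use the Lyapunov functional or \eqref{Eq.P1ESTIMATEP} here at all: it exploits the semigroup $S(t)$ on $W(\Omega)$ produced by the $m$-accretive theory at the end of the proof of Theorem~\ref{Thm4}, and simply observes $\tilde v_1=\lim_{t'\to\infty}S(t'+t)(\underline u)=S(t)\bigl(\lim_{t'\to\infty}S(t')(\underline u)\bigr)=S(t)\tilde v_1$, so $\tilde v_1$ is a fixed point of the flow; uniqueness gives $\tilde v_1=\tilde v_2=\hat u$, and Dini finishes.

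Your energy-dissipation alternative is workable in principle, but as written there is a mismatch: the sequence $t_n$ with $u_\tau(\cdot,t_n)\to 0$ in $L^2$ comes from the energy identity for $u$, yet you use it to conclude that the limits of $\underline u$ and $\overline u$ are stationary. The time derivative that must vanish in that passage is $\underline u_\tau$ (resp.\ $\overline u_\tau$), not $u_\tau$. The fix is to apply \eqref{Eq.P1ESTIMATEP} to $\underline u$ and $\overline u$ themselves (they are genuine solutions of \eqref{EqP.1} with their own admissible data), extract separate subsequences, and then pass to the limit in their own weak formulations; monotonicity in $t$ then upgrades the subsequential limits to full limits. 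Two secondary remarks: the invocation of Theorem~\ref{theorem-calderon-zygmundP} ``at each time slice'' is superfluous, since the $C([0,T];W(\Omega))$ regularity from Theorem~\ref{Thm4} already gives the continuity on $\overline\Omega$ that Dini needs; and your specific choice of initial data $m\hat u$, $M\hat u$ is not obviously in $\overline{\mathcal D(L)}^{L^\infty(\Omega,|x|^\beta dx)}$, because $L(m\hat u)=(m-m^{-\gamma})\hat u^{-\gamma}+mf$ inherits a boundary blow-up from $\hat u^{-\gamma}$ whenever $m\neq 1$. The paper sidesteps this by taking abstract sub/supersolutions in $\overline{\mathcal D(L)}$ bracketing $u_0$, whose existence is guaranteed by the hypothesis on $u_0$.
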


Since proofs of the theorems in this section are essentially the same as proofs of the corresponding ones in \cite{MR3842325}, we will give them in the appendix.

\section{Appendix}
Here we give the proofs of Theorem \ref{Thm4} and Theorem \ref{Thm5}.
\begin{proof}[Proof of Theorem \ref{Thm4}]
We will follow the proofs of \cite[Theorem 4.1, Theorem 4.2 and Proposition 2.8]{MR3842325}. 

Let $\eta_t=\frac{T}{n}$ and for $0\leq k \leq n$, define $t_k=k\eta_t$ and 
$$ f_k(x):=\frac{1}{\eta_t} \int_{t_{k-1}}^{t_k} f(x,\tau) \, d\tau, \qquad \forall x \in \Omega. $$
Also, define
\begin{equation*} 
f_{\eta_t}(x,t):=
\begin{cases}
f_1(x) \qquad & 0 \leq t < t_1, \\
f_2(x) \qquad & t_1 \leq t < t_2, \\
\,\,\,\, \vdots \qquad & \qquad \vdots \\
f_n(x) \qquad & t_{n-1} \leq t < t_n.
\end{cases}
\end{equation*}
Clearly we have $f_{\eta_t}(\cdot,t) \in L^{\infty}(\Omega, |x|^{\beta} \, dx) \subset L^2(\Omega)$, $t \in [0,T]$, and for $ 1 < p < +\infty$, 
\begin{equation}
\label{EqP.3} 
\| f_{\eta_t} \|_{L^p(\Omega \times (0,T))} \leq \left( |\Omega| T \right)^{\frac{1}{p}} \| f \|_{L^p(\Omega \times (0,T))},
\end{equation}
Now, let $\theta=\eta_t$, and $g =\eta_t f_{k} + u_{k-1} \in L^{\infty}(\Omega, |x|^{\beta} \, dx)$ in problem \eqref{EqP.2}. Then, Theorem \ref{Thm3} implies the existence of $u_k \in X_0^s(\Omega) \cap \mathcal{U}^{\mathrm{Sing}}_{\gamma}$ as a solution to the following problem:
\begin{equation}
\label{EqP.4}
\begin{cases}
\dfrac{u_k-u_{k-1}}{\eta_t}+\Big((-\Delta )^s u_k - \lambda \dfrac{u_k}{|x|^{2s}} - \dfrac{1}{u_k^{\gamma}} \Big) = f_k & \quad \mathrm{in} \,\, \Omega,\\ u_k>0 & \quad \mathrm{in} \,\, \Omega, \\ u_k=0 & \quad \mathrm{in} \,\, \big(\mathbb{R}^N \setminus \Omega \big),
\end{cases}
\end{equation}
where the above iteration starts from the initial condition of problem \eqref{EqP.1}, i.e. $u_0(x)$.

Now, for $1 \leq k \leq n$, and $ t \in [t_{k-1},t_k)$, inspired by the implicit Euler method, we define
\begin{equation*} 
\begin{cases}
u_{\eta_t}(x,t):=u_k(x), \\
\tilde{u}_{\eta_t}(x,t):=\dfrac{u_k(x)-u_{k-1}(x)}{\eta_t}(t-t_{k-1})+u_{k-1}(x).
\end{cases}
\end{equation*}
The funtions $u_{\eta_t}$ and $\tilde{u}_{\eta_t}$ satisfies 
\begin{equation}
\label{EqP.6}
\frac{\partial \tilde{u}_{\eta_t}}{\partial t} + \Big((-\Delta )^s u_{\eta_t} - \lambda \dfrac{u_{\eta_t}}{|x|^{2s}} - \dfrac{1}{u_{\eta_t}^{\gamma}} \Big) = f_{\eta_t}.
\end{equation}
Now, in what follows, we establish some uniform estimates in $\eta_t$ for $u_{\eta_t}$ and $\tilde{u}_{\eta_t}$. 

Multiplying \eqref{EqP.4} by $\eta_t u_k$, integrating over $\mathbb{R}^N$ and summing from $k=1$ to $n' \leq n$, using Young's inequality,  \eqref{EqP.3} and the embedding \eqref{Eq3.350} we get for a constant $C>0$
\begin{equation}
\label{EqP.7}
\begin{aligned}
\sum_{k=1}^{n'} \int_{\Omega} (u_k-u_{k-1}) u_k \, dx & + \eta_t \sum_{k=1}^{n'} \Bigg( \frac{C_{N,s}}{2} \|u_k\|^2_{X_0^s(\Omega)}- \lambda \int_{\Omega} \frac{(u_k)^2}{|x|^{2s}} \, dx \\
& \qquad \qquad - \int_{\Omega} \frac{1}{u_k^{\gamma-1}} \, dx \Bigg) \\
&= \eta_t \sum_{k=1}^{n'} \int_{\Omega} f_ku_k \, dx \\
& \leq \eta_t \sum_{k=1}^{n'} \int_{\Omega} \frac{|f_k|^2}{2} \, dx + \eta_t \sum_{k=1}^{n'} \int_{\Omega} \frac{|u_k|^2}{2} \, dx \\
& \leq \frac{T|\Omega|}{2} \|f\|^2_{L^{\infty}(\Omega \times (0,T))} + \frac{C \eta_t}{2} \sum_{k=1}^{n'} \|u_k\|^2_{X_0^s(\Omega)}.
\end{aligned}
\end{equation}
For the first term in the left-hand side of \eqref{EqP.7}, similar to (2.7) in the proof of \cite[Theorem 0.9]{MR2891356}, we have the following equality
\begin{equation}
\label{EqP.8}
\begin{aligned}
\sum_{k=1}^{n'} \int_{\Omega} (u_k-u_{k-1}) u_k \, dx & = \frac{1}{2} \sum_{k=1}^{n'} \int_{\Omega} |u_k-u_{k-1}|^2 \, dx \\
& \quad + \frac{1}{2} \int_{\Omega} |u_{n'}|^2 \, dx - \frac{1}{2} \int_{\Omega} |u_0|^2 \, dx.
\end{aligned}
\end{equation}
Now, let $w \in X_0^s(\Omega) \cap \mathcal{U}^{\mathrm{Sing}}_{\gamma}$ solves 
\begin{equation*} 
\begin{cases}
(-\Delta )^s w = \lambda \dfrac{w}{|x|^{2s}}+\dfrac{1}{w^{\gamma}} & \quad \mathrm{in} \,\, \Omega,\\ w>0 & \quad \mathrm{in} \,\, \Omega, \\ w=0 & \quad \mathrm{in} \,\, \big(\mathbb{R}^N \setminus \Omega \big),
\end{cases}
\end{equation*}
and define $\underline{u}=mw$, $m>0$, and $\overline{u}=Mw$, $M>0$. By a direct computation we have
\begin{equation*} 
(-\Delta )^s \underline{u} - \lambda \dfrac{\underline{u}}{|x|^{2s}}-\dfrac{1}{\underline{u}^{\gamma}} = \frac{m^{\gamma+1}-1}{m^{\gamma} w^{\gamma}},
\end{equation*}
and
\begin{equation*} 
(-\Delta )^s \overline{u} - \lambda \dfrac{\overline{u}}{|x|^{2s}}-\dfrac{1}{\overline{u}^{\gamma}} = \frac{M^{\gamma+1}-1}{M^{\gamma} w^{\gamma}}. 
\end{equation*}
Since $w$ behaves as $c_1|x|^{-\beta}$ near the origin and behaves as $c_2\delta^s$, near the boundary, $\partial \Omega$, we can choose $m>0$ small enough, and $M>0$ large enough, such that
\begin{equation*} 
\begin{cases}
(-\Delta)^s \underline{u} - \lambda \dfrac{\underline{u}}{|x|^{2s}} - \dfrac{1}{\underline{u}^{\gamma}} \leq - |x|^{\gamma \beta} & \quad \mathrm{in} \,\, \Omega, \\
\underline{u}=0 & \quad \mathrm{in} \,\, \big(\mathbb{R}^N \setminus \Omega \big), 
\end{cases}
\end{equation*}
and
\begin{equation*} 
\begin{cases}
(-\Delta)^s \overline{u} - \lambda \dfrac{\overline{u}}{|x|^{2s}} - \dfrac{1}{\overline{u}^{\gamma}}  \geq |x|^{\gamma \beta}  & \quad  \mathrm{in} \,\, \Omega, \\
\overline{u}=0 & \quad \mathrm{in} \,\, \big(\mathbb{R}^N \setminus \Omega \big). 
\end{cases} 
\end{equation*}
Since $ u_0 \in \mathcal{U}^{\mathrm{Sing}}_{\gamma}$, we can choose $\underline{u}$ and $\overline{u}$ such that it satisfies the above inequalities and $\underline{u} \leq u_0 \leq \overline{u}$. From the monotonicity of the operator $ (-\Delta)^s u - \lambda \frac{u}{|x|^{2s}} - u^{-\gamma}$, and applying it iteratively we get $\underline{u} \leq u_k \leq \overline{u}$, for all $k$. This implies for a.e. $(x,t) \in [0,T] \times \Omega$,
\begin{equation}
\label{EqP.10}
\underline{u}(x) \leq u_{\eta_t}, \qquad \tilde{u}_{\eta_t}(x,t) \leq \overline{u}(x).
\end{equation}
Thus $u_{\eta_t}, \tilde{u}_{\eta_t} \in \mathcal{U}^{\mathrm{Sing}}_{\gamma}$ uniformly for each $t \in [0,T]$. Now, for the singular term in \eqref{EqP.7}, we can estimate as follows:
\begin{equation}
\label{EqP.10.1}
\eta_t \sum_{n=1}^{n'} \int_{\Omega} \frac{1}{u_k^{\gamma}} \, dx \leq
\begin{cases}
T \int_{\Omega} \overline{u}^{1-\gamma} \, dx < +\infty, \quad & 0<\gamma  \leq 1, \\
T \int_{\Omega} \underline{u}^{1-\gamma} \, dx < +\infty, \quad & \gamma>1, \,\, \mathrm{with} \,\, 2s(\gamma-1)<\gamma+1.
\end{cases}
\end{equation}
By the definition of $u_{\eta_t}$ and $\tilde{u}_{\eta_t}$, and noting that $u_k \in L^{\infty}(\Omega, |x|^{\beta} \, dx)$, for all $k$, we obtain
\begin{equation}
\label{EqP.11}
u_{\eta_t}, \tilde{u}_{\eta_t} \,\, \text{are bounded in} \,\, L^{\infty}([0,T];L^{\infty}(\Omega, |x|^{\beta} \, dx)).
\end{equation} 
On the other hand, for $t \in [t_{k-1},t_k)$, we have
\begin{equation*} 
\begin{aligned}
\| \tilde{u}_{\eta_t}(t,\cdot) \|_{X_0^s(\Omega)} & = \Big\| \frac{(t-t_{k-1})}{\eta_t} u_k + \frac{\eta_t-t+t_{k-1}}{\eta_t}u_{k-1} \Big\|_{X_0^s(\Omega)} \\
& \leq \| u_k \|_{X_0^s(\Omega)}+ \| u_{k-1} \|_{X_0^s(\Omega)}.
\end{aligned}
\end{equation*}
Integrating both sides of \eqref{EqP.7} over $(t_{k-1},t_k)$ and using the above estimates, the Hardy Inequality and \eqref{EqP.8} we get that 
\begin{equation*} 
u_{\eta_t}, \tilde{u}_{\eta_t} \,\, \text{are bounded in} \,\, L^{2}([0,T];X_0^s(\Omega)).
\end{equation*}

Now we want to obtain another a priori estimate. 

Multiplying \eqref{EqP.4} by $ u_k - u_{k-1}$, integrating over $\mathbb{R}^N$ and summing from $k=1$ to $n' \leq n$, using Young's inequality we get
\begin{equation}
\label{EqP.13}
\begin{aligned}
& \eta_t \sum_{k=1}^{n'} \int_{\Omega} \Big(\frac{u_k-u_{k-1}}{\eta_t}\Big)^2 \, dx + \sum_{k=1}^{n'} \int_{\mathbb{R}^N} \Big( (-\Delta )^{s} u_k(x) \Big) (u_k-u_{k-1})(x) \, dx \\
& \quad - \lambda \sum_{k=1}^{n'} \int_{\Omega} \frac{u_k(u_k-u_{k-1})}{|x|^{2s}} \, dx -\sum_{k=1}^{n'} \int_{\Omega} \frac{u_k-u_{k-1}}{u_k^{\gamma}} \, dx  \\
& = \eta_t \sum_{k=1}^{n'} \int_{\Omega} \frac{f_k(u_k-u_{k-1})}{\eta_t} \, dx \\
& \leq \frac{\eta_t}{2} \sum_{k=1}^{n'} \Bigg( \int_{\Omega} |f_k|^2 \, dx + \int_{\Omega} \Big(\frac{u_k-u_{k-1}}{\eta_t}\Big)^2 \, dx \Bigg),
\end{aligned}
\end{equation}
which implies
\begin{equation}
\label{EqP.14}
\begin{aligned}
& \frac{\eta_t}{2} \sum_{k=1}^{n'} \int_{\Omega} \Big(\frac{u_k-u_{k-1}}{\eta_t}\Big)^2 \, dx + \sum_{k=1}^{n'} \int_{\mathbb{R}^N} \Big( (-\Delta )^{s} u_k(x) \Big) (u_k-u_{k-1})(x) \, dx \\
& \quad - \lambda \sum_{k=1}^{n'} \int_{\Omega} \frac{u_k(u_k-u_{k-1})}{|x|^{2s}} \, dx -\sum_{k=1}^{n'} \int_{\Omega} \frac{u_k-u_{k-1}}{u_k^{\gamma}} \, dx \\
& \leq \frac{|\Omega|T}{2} \sup_{0\leq t \leq T} \|f(\cdot,t)\|_{L^{2}(\Omega)}^2.
\end{aligned}
\end{equation}
By using the convexity of the term $-\frac{1}{1-\gamma} \int_{\Omega} u^{1-\gamma} \, dx$, we get
\begin{equation}
\label{EqP.14.1}
\frac{1}{1-\gamma} \int_{\Omega} \Big( u_{k-1}^{1-\gamma} - u_k^{1-\gamma} \Big) \, dx \leq - \int_{\Omega} \frac{u_k-u_{k-1}}{u_k^{\gamma}} \, dx.
\end{equation}
Also, we have
\begin{equation}
\label{EqP.15}
\frac{C_{N,s}}{2} \Big( \|u_k\|^2_{X_0^s(\Omega)} -  \|u_{k-1}\|^2_{X_0^s(\Omega)} \Big) \leq \int_{\mathbb{R}^N} \Big( (-\Delta )^{s} u_k(x) \Big) (u_k-u_{k-1})(x) \, dx,
\end{equation}
and
\begin{equation}
\label{EqP.15-ADD-H}
\int_{\Omega} \frac{(u_k)^2-(u_{k-1})^2}{|x|^{2s}} \, dx \leq \int_{\Omega} \frac{u_k(u_k-u_{k-1})}{|x|^{2s}} \, dx.
\end{equation}
Therefore \eqref{EqP.14} together with \eqref{EqP.14.1}, \eqref{EqP.15} and \eqref{EqP.15-ADD-H} gives 
\begin{equation}
\label{EqP.16}
\begin{aligned}
& \frac{\eta_t}{2} \sum_{k=1}^{n'} \int_{\Omega} \Big(\frac{u_k-u_{k-1}}{\eta_t}\Big)^2 \, dx + \frac{C_{N,s}}{2} \Big( \|u_{n'}\|^2_{X_0^s(\Omega)} -  \|u_{0}\|^2_{X_0^s(\Omega)} \Big) \\
& \quad - \lambda \int_{\Omega} \frac{(u_{n'})^2-(u_{0})^2}{|x|^{2s}} \, dx + \frac{1}{1-\gamma} \int_{\Omega} \Big( (u_0)^{1-\gamma}-(u_{n'})^{1-\gamma} \Big) \, dx \\
& \leq \frac{|\Omega|T}{2} \sup_{0\leq t \leq T} \|f(\cdot,t)\|_{L^{2}(\Omega)}^2.
\end{aligned}
\end{equation}
Integrating over $(t_{k-1},t_k)$ on both sides of \eqref{EqP.16} and using \eqref{EqP.10.1} and Hardy inequality, we get
\begin{equation*} 
\frac{\eta_t}{2} \int_{0}^{T} \int_{\Omega} \Big| \frac{\partial \tilde{u}_{\eta_t} }{\partial t} \Big|^2 \, dx dt  < + \infty,
\end{equation*}
which implies 
\begin{equation}
\label{EqP.18}
\frac{\partial \tilde{u}_{\eta_t}}{\partial t} \,\, \text{is bounded in} \,\, L^{2}(\Omega \times (0,T)) \,\, \text{uniformly in} \,\, \eta_t.
\end{equation}
Also, using the definition of $u_{\eta_t}$ and $\tilde{u}_{\eta_t}$, we obtain
\begin{equation}
\label{EqP.19}
u_{\eta_t} \,\, \text{and} \,\, {\tilde{u}_{\eta_t}} \,\, \text{are bounded in} \,\, L^{\infty}([0,T]; X_0^s(\Omega)) \,\, \text{uniformly in} \,\, \eta_t.
\end{equation}
Moreover, there exists a constant $C > 0$ (independent of $\eta_t$) such that
\begin{equation}
\label{EqP.20}
\|u_{\eta_t} - \tilde{u}_{\eta_t}\|_{L^{\infty}([0,T];L^2(\Omega))} \leq \max_{1 \leq k \leq n} \|u_k-u_{k-1}\|_{L^2(\Omega)} \leq C (\eta_t)^{\frac{1}{2}}.
\end{equation}
Now, \eqref{EqP.11} and \eqref{EqP.19}, implies 
\begin{equation*} 
u_{\eta_t} \,\, \text{and} \,\, \tilde{u}_{\eta_t} \,\, \text{are bounded in} \,\, L^{\infty}([0,T]; X_0^s(\Omega) \cap L^{\infty}(\Omega, |x|^{\beta} \, dx)) \,\, \text{uniformly in} \,\, \eta_t.
\end{equation*}
Therefore, up to a subsequence, as $\eta_t \to 0^+$ (i.e. $n \to \infty$) 
\begin{equation}
\label{EqP.22}
\begin{aligned}
& \tilde{u}_{\eta_t} \to u, \,\, \text{and} \,\, u_{\eta_t} \to v, \quad \text{weak-starly in} \,\,  L^{\infty}([0,T]; X_0^s(\Omega) \cap L^{\infty}(\Omega, |x|^{\beta} \, dx)), \\
& \frac{\partial \tilde{u}_{\eta_t}}{\partial t} \rightharpoonup \frac{\partial u}{\partial t}, \qquad \text{weakly in} \,\, L^2(\Omega \times (0,T)),
\end{aligned}
\end{equation}
where $u,v \in L^{\infty}([0,T]; X_0^s(\Omega) \cap L^{\infty}(\Omega, |x|^{\beta} \, dx))$, and $\frac{\partial u}{\partial t} \in L^2(\Omega \times (0,T)).$
From \eqref{EqP.20}, we deduce that $u \equiv v$. Also, from \eqref{EqP.10}, we get that $\underline{u} \leq u \leq \overline{u}$. Thus $u \in \mathcal{A}(\Omega_T) \cap \mathcal{U}^{\mathrm{Sing}}_{\gamma}$.

Now we want to show that $u$ is the candidate to the weak solution to \eqref{EqP.1}. By the definition of $\tilde{u}_{\eta_t}$, we see that for a.e. $x \in \Omega$, $\tilde{u}_{\eta_t}(\cdot,x) \in C([0,T])$. By \eqref{EqP.18}, we get that $\frac{\partial \tilde{u}_{\eta_t} }{\partial t}$ is bounded in $L^2(\Omega \times (0,T))$ uniformly in $\eta_t$. Also, $\{u_{\eta_t}\}$ is a bounded family in $X_0^s(\Omega)$. Now, let define 
$$ V:=\Bigg\{ u \in C([0,T];X_0^s(\Omega)) \,\, : \,\, \frac{\partial u}{\partial t} \in L^2(\Omega \times (0,T)) \Bigg\}, $$
which embeds compactly in $C([0,T];L^2(\Omega))$, by invoking the Aubin-Lions-Simon Lemma. Therefore, we obtain that $\{u_{\eta_t}\}$ is compactly embedded in the space $C([0,T];L^2(\Omega))$. Now, using $\underline{u} \leq \tilde{u}_{\eta_t} \leq \overline{u}$, we deduce that $\{u_{\eta_t}\}$ is compactly embedded in $C([0,T];L^p(\Omega))$, $1<p<\infty$. Thus, up to a subsequence, as $\eta_t \to 0^+$ 
\begin{equation}
\label{EqP.23}
\tilde{u}_{\eta_t} \to u, \qquad \mathrm{in} \,\,  C([0,T]; L^2(\Omega)).
\end{equation}
Therefore, from \eqref{EqP.23} and \eqref{EqP.20} we obtain that as $\eta_t \to 0^+$
\begin{equation}
\label{EqP.24}
u_{\eta_t} \to u, \qquad \mathrm{in} \,\,  L^{\infty}([0,T]; L^2(\Omega)).
\end{equation}
Plugging in the test function $\phi=u_{\eta_t} - u$, in \eqref{EqP.6}, we obtain
\begin{equation*} 
\int_{0}^{T} \int_{\Omega} \Bigg(\frac{\partial \tilde{u}_{\eta_t}}{\partial t} + \Big((-\Delta )^s u_{\eta_t} - \lambda \dfrac{u_{\eta_t}}{|x|^{2s}} - \dfrac{1}{u_{\eta_t}^{\gamma}} \Big)\Bigg) (u_{\eta_t} - u) \, dx dt  = \int_{0}^{T} \int_{\Omega} f_{\eta_t} (u_{\eta_t} - u) \, dx dt.
\end{equation*}
Also, since \eqref{EqP.24} implies that $\int_{0}^{T} \int_{\Omega} \frac{\partial u}{\partial t} (\tilde{u}_{\eta_t} - u) \, dx dt  \to 0$, as $\eta_t \to 0^+$, we get
\begin{equation}
\label{EqP.25}
\begin{aligned}
\int_{0}^{T} \int_{\Omega}  & \Bigg(\frac{\partial \tilde{u}_{\eta_t}}{\partial t} - \frac{\partial u}{\partial t} \Bigg) (\tilde{u}_{\eta_t} - u) \, dx dt + \int_{0}^{T} \Big\langle (-\Delta )^s u_{\eta_t}, u_{\eta_t} - u \Big\rangle \, dt \\
& - \lambda \int_{0}^{T} \int_{\Omega} \dfrac{u_{\eta_t}(u_{\eta_t} - u)}{|x|^{2s}} \, dx dt - \int_{0}^{T} \int_{\Omega} \frac{u_{\eta_t} - u}{u_{\eta_t}^{\gamma}}\, dx dt \\
& = \int_{0}^{T} \int_{\Omega} f_{\eta_t} (u_{\eta_t} - u) \, dx dt + \mathrm{o}_{\eta_t}(1).
\end{aligned}
\end{equation}
Here $\langle \cdot, \cdot \rangle$ denotes the duality pairing between $X^{-s}(\Omega)$ and $X_0^s(\Omega)$. By \eqref{EqP.10}, we know that $u_{\eta_t}^{\gamma} \leq \underline{u}^{\gamma}$. Also, since $\underline{u} \leq u \leq \overline{u}$, by applying the Dominated Convergence Theorem, from \eqref{EqP.24}, we get 
\begin{equation*} 
\int_{0}^{T}\int_{\Omega} \frac{u_{\eta_t}-u}{u_{\eta_t}^{\gamma}} \, dx dt  \leq \int_{0}^{T}\int_{\Omega} \frac{u_{\eta_t}-u}{\underline{u}^{\gamma}} \, dx dt = \mathrm{o}_{\eta_t}(1).
\end{equation*}
Similarly, by using the Dominated Convergence Theorem, from \eqref{EqP.3} and \eqref{EqP.24}, we obtain
\begin{equation*} 
\int_{0}^{T} \int_{\Omega} f_{\eta_t} (u_{\eta_t} - u) \, dx dt = \mathrm{o}_{\eta_t}(1).
\end{equation*}
Now by noting that $\tilde{u}_{\eta_t}(x,0)=u(x,0)=u_0$ in a.e. $\Omega$, and applying the integration by parts formula, we have
\begin{equation*} 
 2 \int_{0}^{T} \int_{\Omega} \Bigg(\frac{\partial \tilde{u}_{\eta_t}}{\partial t} - \frac{\partial u}{\partial t} \Bigg) (\tilde{u}_{\eta_t} - u) \, dx dt = \int_{\Omega}  (\tilde{u}_{\eta_t} - u)^2(T) \, dt.
 \end{equation*}
Therefore, by using \eqref{EqP.25} and the facts that $\int_{0}^{T} \Big\langle (-\Delta )^s u, u_{\eta_t} - u \Big\rangle \, dt=\mathrm{o}_{\eta_t}(1)$, and $\int_{0}^{T} \int_{\Omega} \dfrac{u(u_{\eta_t} - u)}{|x|^{2s}} \, dx dt=\mathrm{o}_{\eta_t}(1)$, which they follows from \eqref{EqP.24}, we obtain
\begin{equation*} 
\begin{aligned}
\frac{1}{2} \int_{\Omega}  (\tilde{u}_{\eta_t} - u)^2(T) \, dt & + \int_{0}^{T} \Big\langle (-\Delta )^s u_{\eta_t}-(-\Delta )^s u, u_{\eta_t} - u \Big\rangle \, dt \\
& - \lambda \int_{0}^{T} \int_{\Omega} \dfrac{(u_{\eta_t} - u)^2}{|x|^{2s}} \, dx dt  = \mathrm{o}_{\eta_t}(1). 
\end{aligned}
\end{equation*}
Now, \eqref{EqP.24} together with the Hardy inequality gives
\begin{equation*} 
\int_{0}^{T} \| (\tilde{u}_{\eta_t} - u)(t, \cdot) \|_{X_0^s(\Omega)}^2 \, dt = \mathrm{o}_{\eta_t}(1).
\end{equation*}
The above equations implies that as $\eta_t \to 0^+$
\begin{equation}
\label{EqP.26}
(-\Delta)^s u_{\eta_t} \to (-\Delta)^s u, \qquad \mathrm{in} \,\, L^2([0,T]; X^{-s}(\Omega)).
\end{equation}
Using \eqref{EqP.10} and the fractional Hardy-Sobolev inequality, we obtain that the following inequalities holds for any $\phi \in X_0^s(\Omega)$.
\begin{equation*} 
\begin{aligned}
& \int_{\Omega} \Big| \frac{\phi}{u_{\eta_t}^{\gamma}} \Big| \, dx \\
& \leq 
\begin{cases}
\displaystyle\int_{\Omega} \frac{|\phi|}{|\overline{u}^{\gamma}|} \, dx \leq C \Bigg( \displaystyle\int_{\Omega} \frac{\phi^2}{\delta^{2s\gamma}} \, dx \Bigg)^{\frac{1}{2}} < + \infty, \,\, & 0<\gamma \leq 1, \\
\displaystyle\int_{\Omega} \frac{|\phi|}{|\underline{u}^{\gamma}|} \, dx \leq \Bigg( \displaystyle\int_{\Omega} \frac{1}{\delta^{2s\frac{\gamma-1}{\gamma+1}}} \, dx \Bigg)^{\frac{1}{2}} \Bigg( \displaystyle\int_{\Omega} \frac{\phi^2}{\delta^{2s}} \, dx \Bigg)^{\frac{1}{2}} < + \infty, \,\, & \gamma>1, \, 2s\frac{\gamma-1}{\gamma+1}<1.
\end{cases}
\end{aligned}
\end{equation*}
Therefore, the Dominated Convergence Theorem implies
\begin{equation}
\label{EqP.27}
\frac{1}{u_{\eta_t}^{\gamma}} \to \frac{1}{u^{\gamma}}, \qquad \mathrm{in} \,\, L^{\infty}([0,T]; X^{-s}(\Omega)) \,\, \text{as} \,\, \eta_t \to 0^+.
\end{equation}

Now we want to show that $u$ satisfies \eqref{EqP.1} in the weak sense. We already know that
\begin{equation*} 
\begin{aligned}
 \int_{0}^{T} \int_{\Omega} \frac{\partial \tilde{u}_{\eta_t}}{\partial t} \phi \, dx dt & + \int_{0}^{T} \int_{\mathbb{R}^N} (-\Delta)^s u_{\eta_t} \phi \, dx dt - \lambda \int_{0}^{T} \int_{\Omega} \dfrac{u_{\eta_t} \phi}{|x|^{2s}} \, dx dt \\
 & -\int_{0}^{T} \int_{\Omega} \frac{\phi}{u_{\eta_t}^{\gamma}} \, dx dt=\int_{0}^{T} \int_{\Omega} f_{\eta_t} \phi \, dx dt,
 \end{aligned}
\end{equation*}
holds for any $\phi \in \mathcal{A}(\Omega_T)$. Now passing on the limit $\eta_t \to 0^+$, and using \eqref{EqP.3}, \eqref{EqP.22}, \eqref{EqP.26} and \eqref{EqP.27}, we obtain 
\begin{equation*} 
\begin{aligned}
\int_{0}^{T} \int_{\Omega} \frac{\partial u}{\partial t} \phi \, dx dt & + \int_{0}^{T} \int_{\mathbb{R}^N} (-\Delta)^s u \phi \, dx dt - \lambda \int_{0}^{T} \int_{\Omega} \dfrac{u \phi}{|x|^{2s}} \, dx dt \\
& -\int_{0}^{T} \int_{\Omega} \frac{\phi}{u^{\gamma}} \, dx dt=\int_{0}^{T} \int_{\Omega} f \phi \, dx dt.
\end{aligned}
\end{equation*}
This means that, $u$ is the weak solution to \eqref{EqP.1}. 

Now we show the uniqueness. Let $u(\cdot,t), v(\cdot,t) \in X_0^s(\Omega) \cap \mathcal{U}^{\mathrm{Sing}}_{\gamma}$ be two weak solutions. Then for any $t \in [0,T]$, we have
\begin{equation*} 
\begin{aligned}
\int_{\Omega} & \frac{\partial(u-v)}{\partial t} \big(u-v \big)(x,t) \, dx + \int_{\mathbb{R}^N} \Big( (-\Delta)^s(u-v)\Big) \big(u-v \big)(x,t) \, dx \\
& - \lambda \int_{\Omega} \frac{\big(u-v \big)^2(x,t)}{|x|^{2s}} \, dx  - \int_{\Omega} \Big( \frac{1}{u^{\gamma}}-\frac{1}{v^{\gamma}} \Big) \big(u-v \big)(x,t) \ dx=0.
\end{aligned}
\end{equation*}
Using Hardy inequality, this implies:
\begin{equation*} 
\begin{aligned}
\frac{\partial }{\partial t} \Bigg( \int_{\Omega} \frac{1}{2} \big(u-v \big)^2(x,t) \, dx \Bigg) & = \frac{\Lambda_{N,s}-\lambda}{\Lambda_{N,s}} \cdot \frac{C_{N,s}}{2} \Big\|\big(u-v \big)(\cdot,t) \Big\|_{X_0^s(\Omega)}^2 \\
& \quad + \int_{\Omega} \Big( \frac{1}{u^{\gamma}}-\frac{1}{v^{\gamma}} \Big) \big(u-v \big)(x,t) \ dx \leq 0.
\end{aligned}
\end{equation*}
Therefore, the function $E : [0, T] \to \mathbb{R}$, $ E(t):= \int_{\Omega} \frac{1}{2} \big(u-v \big)^2(x,t) \, dx $, is a decreasing function. On the other hand, since $u \not \equiv v$, we get $0 < E(t) \leq E(0)=0$, which implies $E(t) \equiv 0$, for all $t \in [0,T]$. This completes the proof of uniqueness.

Now, we prove that $u \in C([0,T]; X_0^s(\Omega))$. From \eqref{EqP.23} we already know that $u \in C([0,T]; L^2(\Omega))$, which implies that the map $\tilde{u}: [0,T] \to X_0^s(\Omega)$, $\big[\tilde{u}(t)\big](x):= u(x,t)$ is weakly continuous. Moreover, from \eqref{EqP.22} we know that $u \in L^{\infty}([0,T]; X_0^s(\Omega))$, which implies $ \tilde{u}(t) \in X_0^s(\Omega)$ and 
\begin{equation}
\label{EqP.provingc.0000}
\| \tilde{u}(t) \|_{X_0^s(\Omega)} \leq \liminf_{t \to t_0} \| \tilde{u}(t) \|_{X_0^s(\Omega)},
\end{equation}
for all $t_0 \in [0,T]$. 

Now, we continue as follows. Multiplying \eqref{EqP.4} by $ u_k - u_{k-1}$, integrating over $\mathbb{R}^N$ and summing from $k=n''$ to $n'$ ($n'$ has been considered in \eqref{EqP.13}) and using \eqref{EqP.15-ADD-H}, \eqref{EqP.15} and \eqref{EqP.14.1} we get
\begin{equation}
\label{EqP.provingc1}
\begin{aligned}
& \frac{\eta_t}{2} \sum_{k=n''}^{n'} \int_{\Omega} \Big(\frac{u_k-u_{k-1}}{\eta_t}\Big)^2 \, dx + \frac{C_{N,s}}{2} \Big( \|u_{n'}\|^2_{X_0^s(\Omega)} -  \|u_{n''-1}\|^2_{X_0^s(\Omega)} \Big) \\
& \qquad \qquad  - \lambda \int_{\Omega} \frac{(u_{n'})^2-(u_{n''})^2}{|x|^{2s}} \, dx + \frac{1}{1-\gamma} \int_{\Omega} \Big( u_{n''-1}^{1-\gamma} - u_{n'}^{1-\gamma} \Big) \, dx \\
& \leq \sum_{k=n''}^{n'} \int_{\Omega} f_{\eta_t} (u_k-u_{k-1}) \, dx.
\end{aligned}
\end{equation}
For any $t_1 \in [t_0, T]$, we choose $n''$ and $n'$ in such a way that $n'' \eta_t \to t_1$ and $n' \eta_t \to t_0$ as $\eta_t \to 0^+$.
Using \eqref{EqP.3}, \eqref{EqP.20}, \eqref{EqP.24} and \eqref{EqP.27}, together with \eqref{EqP.provingc1} we get
\begin{equation}
\label{EqP.provingc2}
\begin{aligned}
& \int_{t_0}^{t_1} \int_{\Omega}  \Big( \frac{\partial u}{\partial t}\Big)^2 \, dx dt + \frac{C_{N,s}}{2} \|u(x,t_1)\|_{X_0^s(\Omega)}^2 -\lambda \int_{\Omega} \dfrac{u^2(x,t_1)}{|x|^{2s}} \, dx \\
& \qquad \qquad - \frac{1}{1-\gamma} \int_{\Omega} u^{1-\gamma}(t_1) \, dx \\
& \leq \int_{t_0}^{t_1} \int_{\Omega} f \frac{\partial u}{\partial t} \, dx dt + \frac{C_{N,s}}{2} \|u(x,t_0)\|_{X_0^s(\Omega)}^2 -\lambda \int_{\Omega} \dfrac{u^2(x,t_0)}{|x|^{2s}} \, dx \\
& \qquad \qquad - \frac{1}{1-\gamma} \int_{\Omega} u^{1-\gamma}(t_0) \, dx.
\end{aligned}
\end{equation}
Noting that $u \in L^{\infty}([0, T]; L^p(\Omega))$, for $1 <p< \infty $, we have
\begin{equation}
\label{EqP.provingc.00001}
\limsup_{t_1 \to t_0^+} \| u(\cdot ,t_1) \|_{X_0^s(\Omega)} \leq \|u(\cdot, t_0)\|_{X_0^s(\Omega)}.
\end{equation}
Therefore, \eqref{EqP.provingc.00001} together with \eqref{EqP.provingc.0000} gives $\lim_{t \to t_0^+} \| u(\cdot ,t) \|_{X_0^s(\Omega)} = \|u(\cdot, t_0)\|_{X_0^s(\Omega)}$, which implies that $u$ is right continuous on $[0, T]$.

Now it is enough to prove the left continuity. Let assume $t_1 > t_0$, and $0 < r \leq t_1 - t_0$. Define
$$ \big[\phi_r(u)\big](x,t) := \frac{u(x,t+r)-u(x,t)}{r}. $$
Using $\phi_r(u)$ as the test function in \eqref{EqP.1}, integrating over $(t_0, t_1) \times \mathbb{R}^N$ and using \eqref{EqP.14.1}, \eqref{EqP.15} and \eqref{EqP.15-ADD-H} we get
\begin{equation*} 
\begin{aligned}
&\int_{t_0}^{t_1} \int_{\Omega}  \frac{\partial u}{\partial t}  \phi_r(u) \, dx dt + \frac{C_{N,s}}{2r} \int_{t_0}^{t_1} \int_{\mathbb{R}^N} \Big( |(-\Delta)^{\frac{s}{2}} u(x,t+r)|^2 - |(-\Delta)^{\frac{s}{2}} u(x,t)|^2 \Big) \, dx dt  \\
& - \frac{\lambda}{r} \int_{t_0}^{t_1} \int_{\Omega} \dfrac{u^2(x,t+r)-u^2(x,t)}{|x|^{2s}} \, dx dt \\
& - \frac{1}{r(1-\gamma)}\int_{t_0}^{t_1}\int_{\Omega} \Big( u^{1-\gamma}(x,t +r) - u^{1-\gamma}(x,t) \Big)  \, dx dt \\
& \geq  \int_{t_0}^{t_1} \int_{\Omega} f \phi_r(u) \, dx dt.
\end{aligned}
\end{equation*}
Then an easy calculations gives
\begin{equation}
\label{EqP.provingc3}
\begin{aligned}
& \int_{t_0}^{t_1} \int_{\Omega} \frac{\partial u}{\partial t} \phi_r(u) \, dx dt + \frac{C_{N,s}}{2r} \Bigg( \int_{t_1}^{t_1+r} \int_{\mathbb{R}^N} |(-\Delta)^{\frac{s}{2}} u(x,t)|^2 \, dx dt \\
& \qquad \qquad \qquad \qquad \qquad \qquad \qquad - \int_{t_0}^{t_0+r} \int_{\mathbb{R}^N} |(-\Delta)^{\frac{s}{2}} u(x,t)|^2 \, dx dt \Bigg)  \\
 & - \frac{\lambda}{r} \Bigg( \int_{t_1}^{t_1+r} \int_{\Omega} \dfrac{u^2(x,t)}{|x|^{2s}} \, dx dt - \int_{t_0}^{t_0+r} \int_{\Omega} \dfrac{u^2(x,t)}{|x|^{2s}} \, dx dt \Bigg)   \\
& - \frac{1}{r(1-\gamma)} \Bigg( \int_{t_1}^{t_1+r} \int_{\Omega} u^{1-\gamma}(x,t) \, dx dt - \int_{t_0}^{t_0+r} \int_{\Omega} u^{1-\gamma}(x,t) \, dx dt \Bigg) \\
& \geq  \int_{t_0}^{t_1} \int_{\Omega} f \phi_r(u) \, dx dt.
\end{aligned}
\end{equation}
Since $u(t) \in X_0^s(\Omega)$ is right continuous on $[0,T]$, by using the Dominated Convergence Theorem, as $r \to 0^+$, we get:
\begin{equation*} 
\begin{aligned}
& \frac{1}{r} \int_{t_1}^{t_1+r} \int_{\mathbb{R}^N} |(-\Delta)^{\frac{s}{2}} u(x,t)|^2 \, dx dt  \to  \int_{\mathbb{R}^N} |(-\Delta)^{\frac{s}{2}} u(x,t_1)|^2 \, dx. \\
& \frac{1}{r} \int_{t_0}^{t_0+r} \int_{\mathbb{R}^N} |(-\Delta)^{\frac{s}{2}} u(x,t)|^2 \, dx dt  \to  \int_{\mathbb{R}^N} |(-\Delta)^{\frac{s}{2}} u(x,t_0)|^2 \, dx. \\
& \frac{1}{r} \int_{t_1}^{t_1+r} \int_{\Omega} \dfrac{u^2(x,t)}{|x|^{2s}} \, dx dt  \to \int_{\Omega} \dfrac{u^2(x,t_1)}{|x|^{2s}} \, dx. \\
& \frac{1}{r} \int_{t_0}^{t_0+r} \int_{\Omega} \dfrac{u^2(x,t)}{|x|^{2s}} \, dx dt  \to \int_{\Omega} \dfrac{u^2(x,t_0)}{|x|^{2s}} \, dx. \\
& \frac{1}{r} \int_{t_1}^{t_1+r} \int_{\Omega} u^{1-\gamma}(x,t) \, dx dt  \to \int_{\Omega} u^{1-\gamma}(x,t_1) \, dx.\\
& \frac{1}{r} \int_{t_0}^{t_0+r} \int_{\Omega} u^{1-\gamma}(x,t) \, dx dt \to \int_{\Omega} u^{1-\gamma}(x,t_0) \, dx.
\end{aligned}
\end{equation*}
Putting the results together in \eqref{EqP.provingc3}, as $r \to 0^+$, we obtain
\begin{equation}
\label{EqP.provingc4}
\begin{aligned}
& \int_{t_0}^{t_1} \int_{\Omega} \Big( \frac{\partial u}{\partial t}\Big)^2 \, dx dt + \frac{C_{N,s}}{2} \|u(x,t_1)\|_{X_0^s(\Omega)}^2 -\lambda \int_{\Omega} \dfrac{u^2(x,t_1)}{|x|^{2s}} \, dx \\
& \qquad \qquad - \frac{1}{1-\gamma} \int_{\Omega} u^{1-\gamma}(t_1) \, dx \\
& \geq \int_{t_0}^{t_1} \int_{\Omega} f \frac{\partial u}{\partial t} \, dx dt + \frac{C_{N,s}}{2} \|u(x,t_0)\|_{X_0^s(\Omega)}^2 -\lambda \int_{\Omega} \dfrac{u^2(x,t_0)}{|x|^{2s}} \, dx \\
& \qquad \qquad - \frac{1}{1-\gamma} \int_{\Omega} u^{1-\gamma}(t_0) \, dx.
\end{aligned}
\end{equation}
Therefore, \eqref{EqP.provingc4} and \eqref{EqP.provingc2} gives the equality. Since the maps $t \mapsto \int_{\Omega} u^{1-\gamma}(x,t) \, dt$, and $t \mapsto \int_{\Omega} \dfrac{u^2(x,t)}{|x|^{2s}} \, dx$ are continuous, therefore $u \in C([0, T]; X_0^s(\Omega))$. Moreover, \eqref{Eq.P1ESTIMATEP} obtains by taking $t_1 = t$ and $t_0 = 0$. 

Finally we want to show that, the solution obtained above can be proved to belong in $C([0, T]; W(\Omega))$ if the initial function $u_0 \in \overline{\mathcal{D}(L)}^{L^{\infty}(\Omega, |x|^{\beta} \, dx)}$. We will use the $m$-accretive operator theory. Let $u_0 \in \overline{\mathcal{D}(L)}^{L^{\infty}(\Omega, |x|^{\beta} \, dx)}$, $\theta>0$, $f_1,f_2 \in L^{\infty}(\Omega, |x|^{\beta} \, dx)$, and $0 <\lambda< \lambda_*$. Also, let $u,v \in X_0^s(\Omega) \cap \mathcal{U}^{\mathrm{Sing}}_{\gamma} \cap W(\Omega)$ be the unique solutions to
\begin{equation*} 
\begin{aligned}
& u + \theta L(u) = f_1, \qquad \text{in} \,\, \Omega, \\
& v + \theta L(v) = f_2, \qquad \text{in} \,\, \Omega.
\end{aligned}
\end{equation*}
Notice that the existence and uniqueness is guaranteed by Theorem \ref{Thm3}. Subtracting the weak formulations of these two equations and using $w:=\Big(|x|^{\beta}(u-v)-\|f_1-f_2\|_{L^{\infty}(\Omega, |x|^{\beta} \, dx)} \Big)^+$ as a test function, we obtain
\begin{equation*} 
\int_{\Omega} w^2 |x|^{\beta}\, dx + \theta \int_{\Omega} \big( L(u)-L(v) \big)w \, dx \leq 0. 
\end{equation*}
Since we can easily check that $\int_{\Omega} \big( L(u)-L(v) \big)w \, dx \geq 0$, thus $ w \equiv 0$ a.e. in $\Omega$, or equivalently
$ |x|^{\beta} (u-v) \leq \|f_1-f_2 \|_{L^{\infty}(\Omega, |x|^{\beta} \, dx)}$. Reversing the roles of $u$ and $v$ gives
\begin{equation*} 
\|u-v\|_{L^{\infty}(\Omega, |x|^{\beta} \, dx)} \leq \|f_1-f_2 \|_{L^{\infty}(\Omega, |x|^{\beta} \, dx)}.
\end{equation*}
This proves that $L$ is $m$-accretive in $W(\Omega)$. Now the rest of the proof obtains by invoking \cite[Theorem 4.2]{MR2582280}, as explained in \cite[Proposition 0.1]{MR2891356}.
\end{proof}

\begin{proof}[Proof of Theorem \ref{Thm5}]
We follow the proof of \cite[Theorem 2.12]{MR3842325}.

Let $\underline{u}, \overline{u} \in \overline{\mathcal{D}(L)}^{L^{\infty}(\Omega, |x|^{\beta} \, dx)}$ be the sub and supersolution respectively to \eqref{Eq1} with $\mu \equiv 1$ such that $\underline{u} \leq u_0 \leq \overline{u}$, which is possible because of $u_0 \in \overline{\mathcal{D}(L)}^{L^{\infty}(\Omega, |x|^{\beta} \, dx)}$.  Let $u$ denotes the weak solution of \eqref{EqP.1} and $v_1$ and $v_2$ be the unique solutions to \eqref{EqP.1} with the initial conditions $\underline{u}$ and $\overline{u}$, respectively. Since $\lambda \in (0,\lambda_*)$, and $\underline{u}, \overline{u} \in \overline{\mathcal{D}(L)}^{L^{\infty}(\Omega, |x|^{\beta} \, dx)}$, thus Theorem \ref{Thm4} gives $v_1, v_2 \in C([0,T];W(\Omega))$. Taking $\underline{u}_0=\underline{u}$ (respectively $\overline{u}_0=\overline{u}$), we consider the sequence $\{\underline{u}_k\}$ (respectively $\{\overline{u}_k\}$) which is non-decreasing (respectively non-increasing) as solutions to the iteration given by \eqref{EqP.4}.
Moreover, we consider the sequence $\{u_k\}$ as the one that is obtained in the iteration \eqref{EqP.4}, and starts with the initial condition $u_0$. Then by the choice of $\eta_t$ we may have
\begin{equation*} 
\underline{u}_k \leq u_k \leq \overline{u}_k,
\end{equation*}
which implies
\begin{equation}
\label{EqP.29.0}
v_1(t) \leq u(t) \leq v_2(t).
\end{equation} 
Now consider the maps $t \mapsto v_1(x,t)$ and $ t \mapsto v_2(x,t)$, which are non-decreasing and non-increasing, respectively (by similar reasoning as the one in \cite[Lemma 10.6]{MR1301779}, or the proof of \cite[Theorem 2.10]{MR3842325}). Also, let $v_1(t) \to \tilde{v}_1$ and $v_2(t) \to \tilde{v}_2$ as $t \to \infty$. Moreover, if $S(t)$ denotes the semigroup on $W(\Omega)$ generated by the given evolution equation $u_t+L(u)=f(x)$, then clearly we have
\begin{equation*} 
\tilde{v}_1=\lim_{t' \to \infty} S(t'+t)(\underline{u}) = S(t) \lim_{t' \to \infty} S(t')(\underline{u}) = S(t) \lim_{t' \to \infty} v_1(t') = S(t) \tilde{v}_1.
\end{equation*}
Similarly, we get
\begin{equation*} 
\tilde{v}_2=S(t) \tilde{v}_2.
\end{equation*}
Thus $\tilde{v}_1$ and $\tilde{v}_2$ are the stationary solutions to \eqref{EqP.1} i.e. solves \eqref{Eq1} with $\mu \equiv 1$. On the other hand, by the uniqueness of solutions to the stationary problem, $\tilde{v}_1=\tilde{v}_2=\hat{u}$. Now, applying the Dini's Theorem (see \cite[Theorem 7.13]{MR0385023}) gives
\begin{equation*} 
\begin{cases}
v_1(t) \to \hat{u} \\
v_2(t) \to \hat{u}
\end{cases}
\quad \text{in} \,\, L^{\infty}(\Omega, |x|^{\beta} \, dx) \,\, \text{as} \,\, t \to \infty.
\end{equation*}
Finally, using \eqref{EqP.29.0}, we conclude that $u(t) \to \hat{u}$ in $L^{\infty}(\Omega, |x|^{\beta} \, dx)$, as $t \to \infty$. 
\end{proof}

\bibliographystyle{plainnat}
\bibliography{mybibfile}

\end{document}